\newtheorem{te}{Theorem}[section]
\newtheorem{example}{Example}
\newtheorem{definition}[te]{Definition}
\newtheorem{os}[te]{Remark}
\newtheorem{lem}[te]{Lemma}
\newtheorem{coro}[te]{Corollary}
\numberwithin{equation}{section}
\begin{document}

    \title{Fractional Klein--Gordon equations and related stochastic processes}

    \author{Roberto Garra$^1$}
    \address{${}^1$Dipartimento di Scienze di Base e Applicate per l'Ingegneria, ``Sapienza'' Universit\`a di Roma.}

    \author{Enzo Orsingher$^2$}
    \address{${}^2$Dipartimento di Scienze Statistiche, ``Sapienza'' Universit\`a di Roma.}

    \author{Federico Polito$^3$}
    \address{${}^3$Dipartimento di Matematica, Universit\`a degli Studi di Torino.}

    \keywords{Fractional derivatives, Fractional Klein--Gordon equation, Mittag--Leffler functions,
        Fractional Bessel equations, Telegraph process, Random flights, Finite-velocity random motions}

    \date{\today}

    \begin{abstract}
        This paper presents finite-velocity random motions driven by fractional Klein--Gordon
        equations of order $\alpha \in (0,1]$.
        A key tool in the analysis is played by the McBride's theory which converts fractional
        hyper-Bessel operators into Erd\'elyi--Kober integral operators.

        Special attention is payed to the fractional telegraph process whose space-dependent
        distribution solves a non-homogeneous fractional Klein--Gordon equation.
        The distribution of the fractional telegraph process for $\alpha = 1$ coincides
        with that of the classical telegraph process and its driving equation
        converts into the homogeneous Klein--Gordon equation.

        Fractional planar random motions at finite velocity are also investigated, the corresponding
        distributions obtained as well as the explicit form of the governing equations.
        Fractionality is reflected into the underlying random motion because in each time interval
        a binomial number of deviations $B(n,\alpha)$ (with uniformly-distributed orientation)
        are considered. The parameter $n$ of $B(n,\alpha)$ is itself a random variable
        with fractional Poisson distribution, so that fractionality acts as a subsampling
        of the changes of direction.
        Finally the behaviour of each coordinate of the planar motion is examined
        and the corresponding densities obtained.

        Extensions to $N$-dimensional fractional random flights are envisaged
        as well as the fractional counterpart of the Euler--Poisson--Darboux
        equation to which our theory applies.
    \end{abstract}

    \maketitle

    \section{Introduction}

        In this paper we consider Klein--Gordon type fractional equations of
        the form
        \begin{equation}
            \label{i1}
            \left(\frac{\partial^2}{\partial t^2}-c^2\frac{\partial^2}{\partial x^2}\right)^{\alpha}u(x,t)=
            \lambda^2 u(x,t),\qquad \alpha \in (0,1].
        \end{equation}
        The equation \eqref{i1} in a natural way emerges within the
        framework of relativistic quantum mechanics for $\lambda^2<0$
        from the expression of relativistic energy \citep{sakurai}. Hereafter we simply call \eqref{i1}
        fractional Klein--Gordon equation,
        for any $\lambda \in \mathbb{R}$.
        By means of the transformation
        \begin{align*}
            w = \sqrt{c^2t^2-x^2},
        \end{align*}
        equation \eqref{i1} takes the form
        \begin{equation}
            \left(\frac{d^2}{dw^2}+\frac{1}{w}\frac{d}{dw}\right)^{\alpha}u(w)= \frac{\lambda^2}{c^{2\alpha}}u(w),
        \end{equation}
        where a fractional power of the Bessel operator
        \begin{equation}
            L_B = \frac{d^2}{dw^2}+\frac{1}{w}\frac{d}{dw}=\frac{1}{w^2}\left(w\frac{d}{dw}w\frac{d}{dw}\right)
        \end{equation}
        appears.
        The fractional Bessel operator $(L_B)^{\alpha}$ can be studied by
        means of the McBride approach to the fractional calculus \citep{mc,mc2,mc1}.
        In particular,
        \begin{align}
            (L_B)^{\alpha}f(w)&=\left(\frac{d^2}{dw^2}+\frac{1}{w}\frac{d}{dw}\right)^{\alpha}f(w)
            \nonumber =
            4^{\alpha}w^{-2\alpha}I_2^{0,-\alpha}I_2^{0,-\alpha}f(w),
        \end{align}
        where the integral operators $I_2^{0,-\alpha}$ are special cases
        of the Erd\'elyi--Kober fractional integrals \citep[see][formula (2.10)]{mc}
        \begin{equation}
            \label{mc1}
            I_m^{\eta, \alpha}f=
            \frac{x^{-m\eta-m\alpha}}{\Gamma(\alpha)}\int_0^x(x^m-u^m)^{\alpha-1}u^{m\eta}f(u) \,d(u^m),
        \end{equation}
        for $\alpha>0$ and $f$ belonging to a suitable functional space (see Section \ref{prel}).

        The telegraph equation (equation of damped vibrations of strings)
        \begin{equation}
            \label{i2}
            \frac{\partial^2 u}{\partial t^2}+2\lambda\frac{\partial u}{\partial t}=c^2\frac{\partial^2 u}{\partial x^2}
        \end{equation}
        can be reduced to the classical Klein--Gordon equation
        (\eqref{i1} with $\alpha =1$) by means of the transformation
        $u(x,t)=e^{-\lambda t}v(x,t)$. Equation \eqref{i2} governs the
        distribution of the telegraph process $\mathcal{T}(t)$, $t \ge0$,
        and the related Klein--Gordon equation directs the absolutely
        continuous component of the distribution of $\mathcal{T}(t)$
        (see, for example, \citet{ale}). The telegraph process is a
        finite-velocity one-dimensional random motion of which many
        probabilistic features are well known. In this paper we study fractional extensions of
        the telegraph process, denoted by $\mathcal{T}^{\alpha}(t)$, $t \ge 0$,
        whose changes of direction are somehow related to the fractional
        Poisson process $\mathcal{N}^{\alpha}(t)$, $t \ge 0$, with
        one-dimensional distribution given by \citep{Beghin}
        \begin{align}
            P\{\mathcal{N}^{\alpha}(t)=k\} = \frac{1}{E_{\alpha, 1}(\lambda t^{\alpha})}
            \frac{\left(\lambda t^{\alpha}\right)^k}{\Gamma(\alpha k +1)},
            \qquad \alpha \in (0,1], \: k= 0,1,\dots,
        \end{align}
        with
        \begin{equation}
            E_{\alpha, 1}(\lambda t^{\alpha})=\sum_{k=0}^{\infty}\frac{(\lambda t^{\alpha})^k}{\Gamma(\alpha
            k+1)}.
        \end{equation}
        The probability law of $\mathcal{T}^{\alpha}(t)$, $t\geq 0$, can be written as
        \begin{align}
            \label{disin}
            p^{\alpha}(x,t) = {} &\frac{1}{E_{\alpha, 1}(\lambda t^{\alpha})}\bigg[ct \sum_{k=1}^{\infty}
            \left(\frac{\lambda}{2^\alpha c^{\alpha}}\right)^{2k}
            \frac{(c^2t^2-x^2)^{\alpha k -1}}{\Gamma(\alpha k)\Gamma(\alpha k +1)}
            + \sum_{k=0}^{\infty}\left(\frac{\lambda}{2^\alpha c^{\alpha}}\right)^{2k+1}
            \frac{(c^2t^2-x^2)^{\alpha k +\frac{\alpha-1}{2}}}{[\Gamma(\alpha k +\frac{1+\alpha}{2})]^2}\bigg]\\
            \nonumber & +\frac{1}{2 E_{\alpha, 1}(\lambda
            t^{\alpha})}[\delta(x+ct)+\delta(x-ct)], \qquad \alpha \in (0,1], \; \mbox{for $|x|\leq ct$}.
        \end{align}
        In \eqref{disin}, multi-index Mittag--Leffler functions \citep{ky} of the form
        \begin{equation}
            \label{i3}
            E^{(2)}_{(\alpha_j), (\beta_j)}(x)=\sum_{k=0}^{\infty}\frac{x^k}{\prod_{j=1}^2\Gamma(\beta_j+k\alpha_j)}
        \end{equation}
        appear.
        We note that, for $\alpha_j = \beta_j = 1$,
        the multi-index Mittag--Leffler function \eqref{i3} coincides
        with the modified Bessel function of the second order.

        The conditional distributions $P\{\mathcal{T}^{\alpha}(t)\in dx|\mathcal{N}^{\alpha}(t)=n\}$, in analogy
        to the conditional laws of the telegraph process $\mathcal{T}(t)$ can be obtained by means of
        the order statistics \citep[see][]{ale}.
        The fractional telegraph-type process can thus be regarded as a continuous-time
        random motion with a rightward step (up to a Beta-distributed
        instant) and a leftward motion during the subsequent time span.

        We consider also the multidimensional Klein--Gordon-type fractional equation
        \begin{equation}
            \label{i4}
            \left(\frac{\partial^2}{\partial t^2}-c^2\Delta\right)^{\alpha}u(\mathbf{x},t)
            = \lambda^2 u(\mathbf{x},t), \qquad \mathbf{x}\in \mathbb{R}^N, \: t\geq 0,
            0<\alpha \le 1,
        \end{equation}
        and a particular attention is devoted to the planar case ($N=2$).
        For $N=2$ and $\alpha =1$, equation \eqref{i4} can be obtained
        by means of the exponential transformation
        \begin{align*}
            v(x,y,t) = e^{-\lambda t}u(x,y,t),
        \end{align*}
        from the planar telegraph equation (also called equation of planar vibrations
        with damping)
        \begin{equation}
            \frac{\partial^2 v}{\partial t^2}+2\lambda \frac{\partial v}{\partial t}
            = c^2\left(\frac{\partial^2}{\partial x^2}+\frac{\partial^2}{\partial y^2}\right)v,
        \end{equation}
        which governs the distribution of a planar random motion with infinite directions \citep[see][]{kol}.
        A time-fractional telegraph equation was examined in
        \citet{ptrf} and the related composition of the telegraph
        process with a reflecting Brownian motion analyzed for
        $\alpha =\frac{1}{2}$. Recently more general space-time
        fractional telegraph equations in $\mathbb{R}^{N}$ were
        investigated in \citet{Bruno} and their solutions derived as
        the distribution of a composition of stable processes at the
        inverse of linear combinations of stable subordinators.
        We are able to obtain a fractional planar random motion $(X^\alpha(t),Y^\alpha(t))$, $t \ge 0$,
        which generalizes
        that treated in \citet{kol} and has explicit distribution
        \begin{align}
            \label{i5}
            P\{X^{\alpha}(t)\in dx, Y^{\alpha}(t)\in dy\} & = \frac{dx \, dy\,\lambda}{2\pi c^{\alpha}
            E_{\alpha,1}(\lambda t^{\alpha})}
            \frac{E_{\alpha,\alpha}\left(
            \frac{\lambda}{c^{\alpha}} \left(\sqrt{c^2t^2-(x^2+y^2)}\right)^{\alpha}\right)}
            {\left(\sqrt{c^2t^2-(x^2+y^2)}\right)^{2-\alpha}} \\
            \nonumber & = \frac{dx \, dy\,\lambda\, u_{\alpha}(x,y,t)}{2\pi c^{\alpha}
            E_{\alpha,1}(\lambda t^{\alpha})}, \qquad (x,y) \in C_{ct},
        \end{align}
        where
        \begin{align*}
            C_{ct}=\{(x,y)\in \mathbb{R}^2: x^2+y^2\leq c^2t^2\}.
        \end{align*}
        The function $u_\alpha$ appearing in \eqref{i5} is a solution
        to the non-homogeneous fractional Klein--Gordon equation
        \begin{equation}
            \nonumber
            \left(\frac{\partial^2}{\partial t^2}-c^2\left\{\frac{\partial^2}{\partial x^2}
            +\frac{\partial^2}{\partial y^2}\right\}\right)^{\alpha}u_{\alpha}(x,y,t)=
            \lambda^2 u_{\alpha}(x,y,t)+\frac{\lambda c^{\alpha}}{\Gamma(-\alpha)}
            \left(\sqrt{c^2t^2-x^2-y^2}\right)^{-\alpha-2},
        \end{equation}
        which reduces to a homogeneous one in the classical case $\alpha = 1$.
        The random motions worked out in this paper develop at finite velocity and the support
        of their distributions is a compact set.
        For this reason the fractional models dealt with here substantially differ from those appeared
        so far in the literature \citep{ptrf}.
        The fractional planar random motion considered here can be described by a particle
        where the number of changes of direction
        coincides with a fraction $\alpha$ of the changes of
        direction of orientation of the classical model.

        The projection of the fractional planar motion $(X^{\alpha}(t), Y^{\alpha}(t))$ on the $x$-axis
        has probability density
        \begin{equation}
            p_{\alpha}(x,t)=\frac{1}{E_{\alpha,1}(\lambda
            t^{\alpha})}\frac{\lambda}{2^\alpha c^{\alpha}}
            \sum_{k=0}^{\infty}\left(\frac{\lambda}{2^\alpha c^{\alpha}}\right)^{k}
            \frac{\left(\sqrt{c^2t^2-x^2}\right)^{k\alpha-1}}{[\Gamma(\frac{\alpha k+1}{2})]^2}, \qquad x \in [-ct, +ct].
        \end{equation}
        Therefore the distribution $p_{\alpha}(x,t)$ does not possess singular components
        as its planar counterpart (as well as the one-dimensional fractional telegraph
        process).
        If $\alpha =1$, we retrieve the distribution (1.3) of
        \citet{ale2}
        which is expressed in terms of Struve functions.

        A solution of the $N$-dimensional fractional Klein--Gordon equation has been obtained in the form
        \begin{equation}
            \label{i6}
            u_\alpha(\mathbf{x},t)= \sum_{k=0}^{\infty}\left(\frac{\lambda}{2^\alpha c^{\alpha}}\right)^{2k}
            \frac{\left(\sqrt{c^2t^2-\sum_{j=1}^N x_j^2}\right)^{2\alpha k+2\alpha-2}}{\Gamma(\alpha
            k+\alpha+\frac{N-1}{2})\Gamma(\alpha k+\alpha)}, \qquad
            \sum_{j=1}^N x_j^2\leq c^2t^2.
        \end{equation}
        From \eqref{i6}, the following conditional distribution can be extracted.
        \begin{equation}
            \label{i7}
            P\{X_1(t)\in dx_1, \dots, X_N(t)\in dx_N|\mathcal{N}^{\alpha}(t)=k\}
            =\frac{\Gamma(\frac{k\alpha+N}{2})\left(\sqrt{c^2t^2-\|x_n\|^2}\right)^{\alpha k-2}}
            {(ct)^{\alpha k +N-2}\Gamma(\frac{\alpha k}{2})\pi^{N/2}}.
        \end{equation}
        If $N=2$ the density \eqref{i7} coincides with the conditional distribution of $(X^{\alpha}(t), Y^{\alpha}(t))$,
        and for $\alpha =1$, we recover result (5)
        of \citet{kol}. For $N=4$, $\alpha=2$ we extract from \eqref{i7} the distribution (3.2) of \citet{ale2}.
        An extension of this
        theory for $\alpha >1$ is considered below.

        The last section of this paper is devoted to higher-order fractional Bessel-type equations of the form
        \begin{equation}
            \Biggl(\frac{1}{w^{n}}\Biggl(\underbrace{w\frac{d}{dw}w\frac{d}{dw}\dots w\frac{d}{dw}}_{\text{$n$
            derivatives}}\Biggr)\Biggr)^{\alpha}f(w)=\lambda^n f(w).
        \end{equation}
        These higher-order Bessel equations arise within the framework of cyclic motions in $\mathbb{R}^N$ with the
        minimal number $N+1$ of velocities directed on the edges of a hyperpolyhedron
        \citep[see for example][]{lac}. A special attention is devoted to the case of three orthogonal
        directions, where the distribution of
        $(X(t),Y(t))$, for $\alpha =1 $ can be expressed in terms of third-order Bessel functions
        \begin{align*}
            I_{0,3}(x)=\sum_{k=0}^{\infty}\left(\frac{x}{3}\right)^{3k}\frac{1}{(k!)^3}.
        \end{align*}
        An application of the McBride theory of fractional powers of differential operators to the Euler--Poisson--Darboux
        fractional equations is also considered.

    \section{Preliminaries on fractional hyper-Bessel operators}

        \label{prel}
        Our starting point is the generalized hyper-Bessel operator, considered in \citet{mc},
        \begin{equation}
            \label{L}
            L=x^{a_1}Dx^{a_2}\dots x^{a_n}Dx^{a_{n+1}},
        \end{equation}
        where $n$ is an integer number, $a_1,\dots, a_{n+1}$ are complex numbers and $D=d/dx$.
        Hereafter we assume that the coefficients $a_j$, $j=1,\dots, n+1$ are real numbers.
        The operator $L$ generalizes the classical $n$-th order hyper-Bessel operator
        \begin{align*}
            L_{B_n}=x^{-n}\underbrace{x\frac{d}{dx}x\frac{d}{dx}\dots x\frac{d}{dx}}_{\text{$n$ times}}.
        \end{align*}
        The operator $L$ defined in \eqref{L} acts on the functional space
        \begin{equation}
            F_{p,\mu}=\{f \colon x^{-\mu}f(x)\in F_p\},
        \end{equation}
        where
        \begin{equation}
            F_p=\{f\in C^{\infty} \colon x^k \frac{d^k f}{dx^k} \in L^p, k=0,1,\dots\},
        \end{equation}
        for $1 \leq p < \infty$ and for any complex number $\mu$ \citep[see][for details]{mc2, mc1}.
        The following lemma gives an alternative representation of the operator $L$.
        \begin{lem}
            \label{duepuntouno}
            The operator $L$ in \eqref{L} can be written as
            \begin{equation}
                \label{Lo}
                L f= m^{n}x^{a-n}\prod_{k=1}^n x^{m-m b_k}D_m x^{m b_k} f,
            \end{equation}
            where
            \begin{align*}
                D_m := \frac{d}{d x^m}=m^{-1}x^{1-m}\frac{d}{dx}.
            \end{align*}
            The constants appearing in \eqref{Lo} are defined as
            \begin{align*}
                a=\sum_{k=1}^{n+1}a_k, \qquad m= |a-n|,
                \qquad b_k= \frac{1}{m}\left(\sum_{i=k+1}^{n+1}a_i+k-n\right), \quad k=1, \dots, n.
            \end{align*}
        \end{lem}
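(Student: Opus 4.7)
The plan is to start from the right-hand side of \eqref{Lo} and transform it back into the original product of \eqref{L} by reducing $D_m$ to $D$ and then collapsing consecutive powers of $x$. Everything is operator-level manipulation with powers of $x$ and the single ordinary derivative $D=d/dx$, so I do not need McBride's functional-analytic machinery here, only the defining identity
\begin{equation*}
    D_m = m^{-1} x^{1-m} D.
\end{equation*}

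The first step is to simplify a single factor. Substituting the identity above gives
\begin{equation*}
    x^{m-m b_k} D_m \, x^{m b_k} f \;=\; m^{-1}\, x^{1-m b_k}\, D\, x^{m b_k} f ,
\end{equation*}
so the product in \eqref{Lo} becomes
\begin{equation*}
    m^{n}\, x^{a-n}\, \prod_{k=1}^{n} \bigl( x^{m-m b_k} D_m \, x^{m b_k}\bigr)
    \;=\; x^{a-n}\, \prod_{k=1}^{n}\bigl( x^{1-m b_k}\, D\, x^{m b_k}\bigr),
\end{equation*}
since the $n$ factors of $m^{-1}$ cancel the $m^n$ prefactor. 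I then expand the product explicitly (respecting the non-commutativity), merging each pair $x^{m b_k}\cdot x^{1-m b_{k+1}} = x^{1+m b_k-m b_{k+1}}$ that occurs between two consecutive $D$'s. This yields an expression of the form $x^{A_1} D \, x^{A_2} D \cdots x^{A_n} D \, x^{A_{n+1}}$ with
\begin{equation*}
    A_1 = (a-n)+1-m b_1,\qquad A_{k+1}=1+m b_k - m b_{k+1}\ (1\le k\le n-1),\qquad A_{n+1}=m b_n.
\end{equation*}

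The final step is the bookkeeping check that $A_j = a_j$ for each $j$, using only the definitions $a=\sum_{k=1}^{n+1}a_k$ and $m b_k=\sum_{i=k+1}^{n+1}a_i+k-n$. Indeed, $m b_n = a_{n+1}+n-n=a_{n+1}$; the telescoping difference gives $m b_k - m b_{k+1}=a_{k+1}-1$, so $A_{k+1}=a_{k+1}$; and $A_1 = a-n+1-\bigl(\sum_{i=2}^{n+1}a_i+1-n\bigr)=a-\sum_{i=2}^{n+1}a_i=a_1$. Notice that this computation is insensitive to the sign convention $m=|a-n|$: the identity holds for any nonzero $m$, and McBride's choice $m=|a-n|$ is used only to guarantee $m>0$, which is needed so that the Erd\'elyi--Kober operators to be introduced later act on the space $F_{p,\mu}$.

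The only real obstacle is a notational one, namely tracking operator (non-commutative) composition while shuffling powers of $x$ past the $D$'s; once the factorization of one block $x^{m-m b_k} D_m x^{m b_k}$ has been normalized, the rest is pure telescoping in the exponents. I would keep the intermediate expression $x^{A_1}D x^{A_2}D\cdots D x^{A_{n+1}}$ explicit so that the verification $A_j=a_j$ is immediate.
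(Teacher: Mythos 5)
Your proof is correct. The paper itself does not prove this lemma but simply cites McBride (1982), Lemma 3.1; your argument is essentially the verification found there, namely rewriting each block $x^{m-mb_k}D_m x^{mb_k}=m^{-1}x^{1-mb_k}Dx^{mb_k}$, cancelling the $m^{n}$ prefactor, and telescoping the exponents, and all three bookkeeping identities ($mb_n=a_{n+1}$, $mb_k-mb_{k+1}=a_{k+1}-1$, hence $A_{k+1}=a_{k+1}$, and $A_1=a_1$) check out. Your side remark that the identity is insensitive to the sign convention $m=|a-n|$ and only needs $m\neq 0$ is also accurate: the factors $x^{m-mb_k}$ and $x^{1-m}$ combine to $x^{1-mb_k}$ and the prefactors cancel for any nonzero $m$, the choice $m=n-a>0$ being relevant only later, when the Erd\'elyi--Kober operators $I_m^{\eta,\alpha}$ are brought in. Two minor points worth making explicit: the product $\prod_{k=1}^{n}$ of non-commuting factors must be read with $k=1$ leftmost (the convention you in fact use; with the opposite ordering the exponents do not reproduce $a_1,\dots,a_{n+1}$), and the case $a=n$ must be excluded since then $m=0$ and both $b_k$ and $D_m$ are undefined.
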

        For the proof, see lemma 3.1, page 525 of \citet{mc}.

        \begin{example}
            Let us consider as a first example, the operator
            \begin{align*}
                L=\frac{d^2}{dx^2}+\frac{1}{x}\frac{d}{dx}=\frac{1}{x^2}\left( x\frac{d}{dx}x\frac{d}{dx} \right),
            \end{align*}
            that is a special case of \eqref{L} with $a_1=-1$, $a_2= 1$, $a_3=0$, $n=m= 2$, $a=0$,
            $b_1=b_2=0$.
            By Lemma \ref{duepuntouno}, we have that
            \begin{align}
                L&=\frac{4}{x^2}\prod_{k=1}^2x^{2-2b_k}D_2x^{2b_k}
                \nonumber = \frac{4}{x^2}\left(x^2D_2\right)\left(x^2D_2\right)\\
                \nonumber& = \frac{4}{x^2}\left(\frac{x}{2}\frac{d}{dx}\right)
                \left(\frac{x}{2}\frac{d}{dx}\right)= \frac{1}{x}\frac{d}{dx}+\frac{d^2}{dx^2}.
            \end{align}
        \end{example}
        In the analysis of the integer power (as well as the fractional power) of the operator $L$,
        a key role is played by $D_m$
        appearing in \eqref{Lo}.

        \begin{lem}
            Let $r$ be a positive integer, $a<n$, $f\in F_{p,\mu}$ and
            \begin{align*}
                b_k\in A_{p,\mu,m}:=\{\eta \in \mathbb{C}
                \colon \Re(m\eta+\mu)+m\neq \frac{1}{p}-ml, \: l=0, 1, 2,\dots\}, \qquad k=1,\dots, n.
            \end{align*}
            Then
            \begin{equation}
                L^r f= m^{nr}x^{-mr}\prod_{k=1}^n I^{b_k,-r}_m f,
            \end{equation}
            where, for $\alpha >0$ and $\Re(m\eta+\mu)+m > \frac{1}{p}$
            \begin{equation}
                \label{mc1-2}
                I_m^{\eta, \alpha}f=
                \frac{x^{-m\eta-m\alpha}}{\Gamma(\alpha)}\int_0^x(x^m-u^m)^{\alpha-1}u^{m\eta}f(u)\, d(u^m),
            \end{equation}
            and for $\alpha\leq 0$
            \begin{equation}
                \label{mc2}
                I_m^{\eta, \alpha}f=(\eta+\alpha+1)I_m^{\eta, \alpha+1}f+\frac{1}{m} I_m^{\eta, \alpha+1}
                \left(x\frac{d}{dx}f\right).
            \end{equation}
        \end{lem}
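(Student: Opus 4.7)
I would prove this by induction on $r$. For the base case $r=1$, Lemma \ref{duepuntouno} together with the assumption $a<n$ (which gives $x^{a-n}=x^{-m}$) yields $Lf=m^n x^{-m}\prod_{k=1}^n A_k f$ with $A_k:=x^{m-mb_k}D_m x^{mb_k}$. It then suffices to identify each block $A_k$ with $I_m^{b_k,-1}$. Setting $\alpha=-1$ in the recursion \eqref{mc2} and using $I_m^{\eta,0}=\mathrm{Id}$ (which one can check is consistent with \eqref{mc2} by a single integration by parts) gives $I_m^{\eta,-1}f=\eta f+(x/m)\,df/dx$, while a direct computation of $x^{m-m\eta}D_m(x^{m\eta}f)$ produces the same expression.

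For the inductive step I substitute the level-$r$ formula into $L^{r+1}f=L(L^r f)$ and use Lemma \ref{duepuntouno} to rewrite the outer $L$. The engine of the argument is the commutation
\begin{equation*}
    A_k\cdot x^{-mr}=x^{-mr}\cdot I_m^{b_k-r,\,-1},
\end{equation*}
which follows because $x^{mb_k}x^{-mr}=x^{m(b_k-r)}$ and $x^{m-mb_k}=x^{-mr}x^{m-m(b_k-r)}$, so pulling $x^{-mr}$ to the left repackages $A_k$ as a shifted block that reconstitutes $I_m^{b_k-r,-1}$ by the base case. Iterating through the $n$ factors collects a single $x^{-mr}$ on the far left and produces
\begin{equation*}
    L^{r+1}f=m^{n(r+1)}x^{-m(r+1)}\prod_{k=1}^n I_m^{b_k-r,\,-1}\prod_{k=1}^n I_m^{b_k,\,-r}f.
\end{equation*}

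To close the induction I would invoke two structural properties from McBride's calculus on $F_{p,\mu}$: pairwise commutativity of the $I_m^{\eta,\alpha}$ when the parameters lie in $A_{p,\mu,m}$, and the index law $I_m^{\eta,\alpha}I_m^{\eta+\alpha,\beta}=I_m^{\eta,\alpha+\beta}$. Commutativity lets me rearrange the two products so that $I_m^{b_k-r,-1}$ sits immediately to the right of $I_m^{b_k,-r}$, and the index law applied with $(\eta,\alpha,\beta)=(b_k,-r,-1)$ collapses each pair into $I_m^{b_k,-(r+1)}$, delivering the required form.

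The main obstacle is justifying these two structural properties when the orders are negative. For positive orders both are elementary consequences of Fubini, but their extension via \eqref{mc2} introduces auxiliary differentiations whose boundedness on $F_{p,\mu}$ must be controlled; this is precisely where the admissibility condition $b_k\in A_{p,\mu,m}$ is used, since it rules out the exceptional exponents at which the shifted Erd\'elyi--Kober operators fail to act on $F_{p,\mu}$. Once this bookkeeping is set up inside McBride's functional-analytic framework, the induction goes through cleanly.
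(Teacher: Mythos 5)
The paper offers no proof of this lemma at all: it is imported verbatim from \citet{mc} ("consult page 525"), so the only comparison available is with McBride's own operational argument, and your reconstruction follows essentially that route. Your steps check out: with $a<n$ Lemma \ref{duepuntouno} gives $x^{a-n}=x^{-m}$; the block identification $x^{m-mb_k}D_mx^{mb_k}=b_k+\frac{x}{m}\frac{d}{dx}=I_m^{b_k,-1}$ is consistent with \eqref{mc2} and $I_m^{\eta,0}=\mathrm{Id}$; the shift relation $x^{m-mb_k}D_mx^{mb_k}\cdot x^{-mr}=x^{-mr}\,I_m^{b_k-r,-1}$ is a one-line computation; and the collapse uses the index law in the correct order, $I_m^{b_k,-r}I_m^{b_k-r,-1}=I_m^{b_k,-(r+1)}$. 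One remark that actually simplifies your "main obstacle": since only \emph{negative integer} orders occur here, the deferred structural inputs need not be taken from the functional-analytic part of McBride's theory. Indeed, iterating \eqref{mc2} gives $I_m^{\eta,-r}=\prod_{j=0}^{r-1}\bigl(\eta-j+\tfrac{x}{m}\tfrac{d}{dx}\bigr)=x^{mr-m\eta}D_m^r\,x^{m\eta}$, i.e.\ each $I_m^{b_k,-r}$ is a polynomial in the single operator $\tfrac{x}{m}\tfrac{d}{dx}$; such operators commute and satisfy the index law as a purely algebraic identity (check it on powers $x^\sigma$), and $F_{p,\mu}$ is stable under $x\,d/dx$, so no mapping estimates are needed for the identity itself. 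The admissibility condition $b_k\in A_{p,\mu,m}$ is what guarantees that this integer-order formula is the restriction of McBride's genuinely fractional operators \eqref{mc1-2}--\eqref{mc2} on $F_{p,\mu}$, which is what the subsequent Definition \eqref{pot} exploits; with that caveat made explicit, your induction is a correct and self-contained proof of the stated formula.
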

        For the proof, consult \citet{mc}, page 525.
        Then, it is possible to give a fractional generalization $L^{\alpha}$ of the operator $L$ with the
        following definition \citep[for further details see][page 527]{mc}.

        \begin{definition}
            Let $m= n-a>0$, $\alpha$ any complex number, $b_k\in A_{p,\mu,m}$, for $k=1,\dots, n$. Then, for
            any $f(x)\in F_{p,\mu}$
            \begin{equation}
                \label{pot}
                L^{\alpha}f=m^{n\alpha}x^{-m\alpha}\prod_{k=1}^{n}I_{m}^{b_k,-\alpha}f.
            \end{equation}
        \end{definition}
        In this paper we will consider however only $\alpha \in \mathbb{R}^+$.

        The relation between the two lemmas above emerges directly from the analysis of the mathematical
        connection between the power of the operator $D_m$ and the generalized fractional
        integrals $I_m^{\eta, \alpha}$, as we are going to discuss.
        In order to understand this relationship we introduce the following operator \citep{mc2}
        \begin{equation}
            \label{kob}
            I_m^{\alpha}f= \frac{m}{\Gamma(\alpha)}\int_0^x(x^m-u^m)^{\alpha-1}u^{m-1}f(u) \, du, \qquad \alpha>0,
        \end{equation}
        which is connected to \eqref{mc1-2} by means of the simple relation
        \begin{align*}
            I_m^{\alpha}f=x^{m\alpha}I_m^{0,\alpha}f,
        \end{align*}
        which is valid for all $\alpha \in \mathbb{R}$.

        It is quite simple to prove that
        \begin{align}
            I_m^{\alpha}f=(D_m)I_m^{\alpha+1}f
            \nonumber & =\frac{m}{\Gamma(\alpha+1)}D_m\int_0^x(x^m-u^m)^{\alpha}u^{m-1}f(u) \, du
            = \underbrace{D_m\dots D_m}_{\text{$r$ times}}I_m^{\alpha+r}f.
        \end{align}
        If $\alpha =-r$, we have that
        \begin{align*}
            I_m^{-r}f=\underbrace{D_m\dots D_m}_{\text{$r$ times}}I_m^{0}f=(D_m)^r f.
        \end{align*}
        For a real number $\alpha$, the same relationship is extended in the form
        \begin{equation}
            I_m^{-\alpha}f=\left(D_m\right)^{\alpha}f.
        \end{equation}
        Since the semigroup property holds for the Erd\'elyi--Kober operator \eqref{kob}, we have that
        \begin{align}
            (D_m)^{\alpha}f & = (D_m)^{n}(D_m)^{\alpha-n}f= I_m^{-n}I_{m}^{n-\alpha}f \\
            & =\frac{m}{\Gamma(n-\alpha)}(D_m)^n
            \int_0^x(x^m-u^m)^{n-\alpha-1}u^{m-1}f(u) \, du. \notag
        \end{align}
        Finally we observe that, for $m=1$ we recover the definition of Riemann--Liouville fractional
        derivative.

    \section{Fractional Klein--Gordon equation}

        \label{piccolo-gordon}
        Let us consider the following fractional Klein--Gordon equation
        \begin{equation}
            \label{KG}
            \left(\frac{\partial^2}{\partial t^2}-c^2\frac{\partial^2}{\partial x^2}
            \right)^{\alpha}u_{\alpha}(x,t)=\lambda^2 u_{\alpha}(x,t),\qquad x\in \mathbb{R}, \:
            t\geq 0, \: \alpha \in (0,1].
        \end{equation}
        The classical Klein--Gordon equation ($\alpha=1$, $\lambda^2<0$)
        emerges from the quantum relativistic energy equation \citep{sakurai}
        \begin{equation}
            E^2=p^2c^2+m^2c^4,
        \end{equation}
        and inserting the quantum mechanical operators for energy and
        momentum, i.e.\ $E=i\hbar\frac{\partial}{\partial t}$ and
        $p= -i\hbar\frac{\partial}{\partial x}$, where $c$ is
        the light velocity and $\hbar$ the Planck constant. In this
        framework the constant $\lambda^2$ appearing in \eqref{KG}
        reads $\lambda^2=-m^2c^4/\hbar^2$.

        The equation \eqref{KG}, for $\alpha=1$, appears also in the
        context of Maxwell equations, of damped vibrations of strings
        and in the treatment of the telegraph processes in probability.
        The fractional Klein--Gordon equation was recently studied in the
        context of nonlocal quantum field theory, within the stochastic
        quantization approach \citep[see][and the references therein]{li}.
        The fractional power of D'Alembert operator has been considered
        by \citet{Bol} and \citet{Lamb}, with different approaches.

        The transformation
        \begin{align}
            \begin{cases}
                \nonumber z_1 = ct+x,\\
                \nonumber z_2 = ct-x,
            \end{cases}
        \end{align}
        reduces \eqref{KG} to the form
        \begin{equation}
            \label{k1}
            \left(4c^2\frac{\partial}{\partial z_1}\frac{\partial}{\partial z_2}
            \right)^{\alpha}u_{\alpha}(z_1,z_2)= \lambda^2 u_{\alpha}(z_1,z_2).
        \end{equation}
        The partial differential equation \eqref{k1} involves
        in fact Riemann--Liouville fractional derivatives with respect to the
        variables $z_1$ and $z_2$ \citep[see][Section 2.3]{pod}.
        The further transformation
        $w= \sqrt{z_1\,z_2}$
        gives the fractional Bessel equation
        \begin{equation}
            \label{k2}
            \left(\frac{d^2}{dw^2}+\frac{1}{w}\frac{d}{dw}\right)^{\alpha}u_\alpha(w)=(L_B)^{\alpha}u_\alpha(w)
            = \frac{\lambda^2}{c^{2\alpha}}u_\alpha(w).
        \end{equation}
        The Bessel operator
        \begin{align*}
            L_{B}=\frac{d^2}{dw^2}+\frac{1}{w}\frac{d}{dw}
        \end{align*}
        appearing in
        \eqref{k2} is a special case of $L$, when $n=2$, $a_1=-1$, $a_2= 1$,
        $a_3=0$. By definition \eqref{pot} and Lemma \ref{duepuntouno} we have that $m= 2$,
        $b_1=b_2=0$ and thus
        \begin{equation}
            \label{k3}
            (L_B)^{\alpha}f(w)=4^{\alpha}w^{-2\alpha}I_2^{0,-\alpha}I_2^{0,-\alpha}f(w).
        \end{equation}
        For us the following lemma plays a relevant role.
        \begin{lem}
            \label{brunello}
            Let be $\eta+\frac{\beta}{m}+1 >0$, $m\in \mathbb{N}$, we have
            that
            \begin{equation}
                I_m^{\eta,\alpha}x^{\beta}=\frac{\Gamma\left(\eta+\frac{\beta}{m}+1\right)}
                {\Gamma\left(\alpha+\eta+1+\frac{\beta}{m}\right)}x^{\beta}.
            \end{equation}
        \end{lem}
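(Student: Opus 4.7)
The identity is an eigenvalue-type computation for the Erdélyi--Kober operator acting on pure powers, so the plan is to reduce it to a Beta integral by elementary substitutions. Assume first that $\alpha>0$, so that the integral representation \eqref{mc1-2} applies directly. I would plug $f(u)=u^\beta$ into \eqref{mc1-2} to get
\begin{equation*}
I_m^{\eta,\alpha}x^\beta
= \frac{x^{-m\eta-m\alpha}}{\Gamma(\alpha)}\int_0^x (x^m-u^m)^{\alpha-1} u^{m\eta+\beta}\,d(u^m).
\end{equation*}

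Next, I would perform the change of variable $v=u^m$ (so $d(u^m)=dv$ and $u^{m\eta+\beta}=v^{\eta+\beta/m}$), which turns the integral into a one-dimensional one over $[0,x^m]$. A second rescaling $v=x^m t$ then pulls all powers of $x$ outside the integral: the $x$-factors combine to $x^{-m\eta-m\alpha}\cdot x^{m\alpha+m\eta+\beta}=x^\beta$, and what remains is the Beta integral $\int_0^1 t^{\eta+\beta/m}(1-t)^{\alpha-1}\,dt$. Using $B(\alpha,\eta+\beta/m+1)=\Gamma(\alpha)\Gamma(\eta+\beta/m+1)/\Gamma(\alpha+\eta+\beta/m+1)$, the $\Gamma(\alpha)$'s cancel and one reads off the stated formula. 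The hypothesis $\eta+\beta/m+1>0$ is used in exactly one place: to ensure the integrand $t^{\eta+\beta/m}$ is integrable at $0$, which is the only potential obstacle in the computation.

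For $\alpha\le 0$, the integral representation is no longer valid and one must fall back on the recursive definition \eqref{mc2}. Since $x\frac{d}{dx}x^\beta=\beta x^\beta$, that recursion collapses to the scalar relation
\begin{equation*}
I_m^{\eta,\alpha}x^\beta = \Bigl(\eta+\alpha+1+\tfrac{\beta}{m}\Bigr)\,I_m^{\eta,\alpha+1}x^\beta.
\end{equation*}
Assuming by induction the claim for $\alpha+1$, the prefactor $\eta+\alpha+1+\beta/m$ combines with $1/\Gamma(\alpha+\eta+\beta/m+2)$ via the functional equation $\Gamma(z+1)=z\,\Gamma(z)$ to produce exactly $1/\Gamma(\alpha+\eta+\beta/m+1)$, so the formula propagates downward in $\alpha$. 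I therefore expect no genuine obstacle: the only subtlety is bookkeeping the two changes of variable and remembering to invoke \eqref{mc2} to cover values of $\alpha$ outside the region where the Beta integral converges.
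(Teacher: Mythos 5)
Your proof is correct and follows essentially the same route as the paper: the paper's proof consists precisely in plugging $f(u)=u^{\beta}$ into the Erd\'elyi--Kober integral and evaluating the resulting Beta integral, which is exactly your computation for $\alpha>0$, with the hypothesis $\eta+\beta/m+1>0$ guaranteeing convergence at the origin. Your additional downward induction via \eqref{mc2} for $\alpha\le 0$ goes beyond the paper's one-line argument but is harmless and consistent with how the lemma is actually invoked there (the paper first applies \eqref{mc2} and then uses the lemma only with positive order, e.g.\ $I_2^{0,1-\alpha}$).
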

        \begin{proof}
            It suffices to calculate the Erd\'elyi--Kober integral
            \begin{align*}
                I^{\eta, \alpha}_{m}x^{\beta}=\frac{x^{-m\eta-m\alpha}}{\Gamma(\alpha)}\int_0^x
                \left(x^m-u^m\right)^{\alpha-1}u^{m\eta}u^{\beta}\, d(u^m).
            \end{align*}
        \end{proof}
        We are now ready to state the following
        \begin{te}
            \label{gallico}
            Let $\alpha\in(0,1]$. The fractional equation
            \begin{align*}
                (L_B)^{\alpha}u_{\alpha}(w)= \frac{\lambda^2}{c^{2\alpha}}u_{\alpha}(w),
            \end{align*}
            is satisfied by
            \begin{equation}
                \label{k5}
                u_{\alpha}(w)=w^{2\alpha-2}\sum_{k=0}^{\infty}\left(
                \frac{\lambda}{2^\alpha c^{\alpha}}w^{\alpha}\right)^{2k}
                \frac{1}{[\Gamma(\alpha k+\alpha)]^2}.
            \end{equation}
        \end{te}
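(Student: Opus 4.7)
The plan is to use the explicit representation \eqref{k3}, namely $(L_B)^{\alpha}=4^{\alpha}w^{-2\alpha}I_2^{0,-\alpha}I_2^{0,-\alpha}$, and apply it term by term to the candidate series \eqref{k5}. Each summand is a constant multiple of the monomial $w^{2\alpha k+2\alpha-2}$, so the action of the two Erdélyi--Kober operators reduces to a pure algebraic computation via Lemma \ref{brunello}, specialised to $m=2$, $\eta=0$, $\alpha\mapsto-\alpha$, and $\beta=2\alpha k+2\alpha-2$. One application produces the eigenvalue $\Gamma(\alpha k+\alpha)/\Gamma(\alpha k)$, and a second application squares it.

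Multiplying by the external factor $4^\alpha w^{-2\alpha}$, the image of the $k$-th summand is a multiple of $w^{2\alpha k-2}$ with coefficient $4^{\alpha}[\Gamma(\alpha k)]^{-2}(\lambda/(2^\alpha c^\alpha))^{2k}$. Since $1/\Gamma(0)=0$, the $k=0$ term is annihilated, and a reindexing $j=k-1$ restores the power $w^{2\alpha j+2\alpha-2}$ while peeling off the constant $4^{\alpha}(\lambda/(2^\alpha c^\alpha))^{2}=\lambda^{2}/c^{2\alpha}$. What remains in the sum is exactly $u_\alpha(w)$, so the right-hand side becomes $(\lambda^{2}/c^{2\alpha})\,u_\alpha(w)$, as required.

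Two subtleties deserve attention. First, Lemma \ref{brunello} is proved only for the integral case $\alpha>0$, whereas here we need $I_2^{0,-\alpha}$ with $\alpha\in(0,1]$. I would justify the extension by using the recursion \eqref{mc2}: on $f=x^{\beta}$ the identity $x\frac{d}{dx}x^{\beta}=\beta x^{\beta}$ immediately yields $I_m^{\eta,-\alpha}x^{\beta}=(\eta-\alpha+1+\beta/m)\,I_m^{\eta,1-\alpha}x^{\beta}$, and then the $\alpha>0$ case of Lemma \ref{brunello} combined with $\Gamma(z+1)=z\Gamma(z)$ reproduces the quotient $\Gamma(\eta+\beta/m+1)/\Gamma(-\alpha+\eta+1+\beta/m)$. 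Second, interchanging the nonlocal operator with the infinite sum requires a convergence argument; this follows from the Mittag--Leffler-type decay of the coefficients $1/[\Gamma(\alpha k+\alpha)]^{2}$, which gives absolute and uniform convergence of \eqref{k5} on compact subsets of $w>0$ and thus licenses a dominated-convergence argument inside the Erdélyi--Kober integrals.

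The main obstacle is not the algebra, which is short, but housekeeping the functional setting: one has to check that $u_\alpha$ (whose leading behaviour near $w=0$ is $w^{2\alpha-2}$) sits in a space $F_{p,\mu}$ for which McBride's representation \eqref{k3} is valid, and that the Lemma \ref{brunello} hypothesis $\eta+\beta/m+1>0$ holds at every step. The latter amounts to $\alpha k+\alpha>0$, which is automatic for $\alpha\in(0,1]$ and $k\ge 0$, so no term is obstructed.
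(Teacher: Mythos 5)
Your proposal is correct and follows essentially the same route as the paper: termwise application of the representation \eqref{k3}, reduction of the negative-order operators $I_2^{0,-\alpha}$ via the recursion \eqref{mc2} to the positive-order case covered by Lemma \ref{brunello} (yielding the eigenvalue $\Gamma(\alpha k+\alpha)/\Gamma(\alpha k)$ squared), annihilation of the $k=0$ term through $1/\Gamma(0)=0$, and reindexing to peel off $\lambda^{2}/c^{2\alpha}$. Your added remarks on convergence and on the hypothesis $\alpha k+\alpha>0$ are sensible housekeeping that the paper leaves implicit, but they do not change the argument.
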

        \begin{proof}
            From \eqref{k3}, we have that
            \begin{align}
                \label{pri}
                (L_B)^{\alpha}w^{\beta} & =4^{\alpha}w^{-2\alpha}I_2^{0,-\alpha}I_2^{0,-\alpha}w^{\beta}\\
                \nonumber & = (\text{by \eqref{mc2}})=
                4^{\alpha}w^{-2\alpha}\left[(1-\alpha)I_2^{0, 1-\alpha}+\frac{1}{2}
                I_2^{0, 1-\alpha}\left(w\frac{d}{dw}\right)\right]^2 w^{\beta}\\
                \nonumber & = 4^{\alpha}w^{-2\alpha}\left(1-\alpha
                +\frac{1}{2}\beta\right)^2 I_2^{0,1-\alpha}I_2^{0,1-\alpha}w^{\beta}\\
                \nonumber & = (\text{by lemma (3.1))}=4^{\alpha}w^{-2\alpha}\left(1
                -\alpha+\frac{1}{2}\beta\right)^2\left[\frac{\Gamma\left(\frac{\beta}{2}+1\right)}
                {\Gamma\left(1-\alpha+1+\frac{\beta}{2}\right)}\right]^2 w^{\beta}\\
                \nonumber & = 4^{\alpha}\left[\frac{\Gamma\left(\frac{\beta}{2}+1\right)}
                {\Gamma\left(1-\alpha+\frac{\beta}{2}\right)}\right]^2 w^{\beta-2\alpha}.
            \end{align}
            By applying now the operator \eqref{k3} to the function \eqref{k5}
            we have that (being $\beta = 2\alpha k+2\alpha-2$)
            \begin{align}
                &(L_B)^{\alpha}\left(w^{2\alpha-2}\sum_{k=0}^{\infty}
                \left(\frac{\lambda}{2^\alpha c^{\alpha}}w^{\alpha}\right)^{2k}
                \frac{1}{[\Gamma(\alpha k+\alpha)]^2}\right)
                =4^{\alpha}\sum_{k=0}^{\infty}\left(\frac{\lambda}{2^\alpha c^{\alpha}}\right)^{2k}
                \frac{w^{2\alpha k-2}}{[\Gamma(\alpha k)]^2}\\
                \nonumber & =\frac{\lambda^2}{c^{2\alpha}}
                \sum_{k'=0}^{\infty}\left(\frac{\lambda}{2^\alpha c^{\alpha}}\right)^{2k}
                \frac{w^{2\alpha k'+2\alpha-2}}{[\Gamma(\alpha k'+\alpha)]^2}
                = \frac{\lambda^2}{c^{2\alpha}}u_{\alpha}(w),
            \end{align}
            where $k'+1=k$.
        \end{proof}

        \begin{os}
            We observe that
            \begin{align*}
                [(L_B)^{\alpha}]^n u_{\alpha}(w)=\underbrace{(L_B)^{\alpha}\dots (L_B)^{\alpha}}_{\text{n}}
                u_{\alpha}(w)=
                \frac{\lambda^{2n}}{c^{2n\alpha}}u_{\alpha}(w), \qquad \alpha \in (0,1],
            \end{align*}
            by simply iterating the result of Theorem \ref{gallico}, is satisfied by
            the function \eqref{k5}.
        \end{os}

        \begin{os}
            The solution \eqref{k5} of equation \eqref{k2} can be expressed
            in terms of generalized $\beta$-Mittag--Leffler functions \citep{GP}
            \begin{equation}
                \label{gm}
                E_{\beta;\nu,\gamma}(x)=\sum_{k=0}^{\infty}\frac{x^k}{[\Gamma(\nu k+\gamma)]^\beta},
                \qquad \beta>0, \: \nu>0, \: \gamma\in \mathbb{R}.
            \end{equation}
            The function \eqref{gm}, for $\nu = \gamma = 1$ and $\beta \in
            \mathbb{N}$, coincides with the hyper-Bessel function \citep[see, for example,][]{ya}
            \begin{equation}
                \label{gm1}
                E_{n;1,1}(x)=\sum_{k=0}^{\infty}\frac{x^k}{(k!)^{n}}=I_{0,n}(n\sqrt[n]{x}).
            \end{equation}
            The solution \eqref{k5} can be also represented in terms of multi-index
            Mittag--Leffler functions, defined as \citep{ky}
            \begin{equation}
                E^{(m)}_{(\rho_i), (\mu_i)}(x)=\sum_{k=0}^{\infty}\frac{x^k}{\prod_{j=1}^m\Gamma(k\rho_j+\mu_j)},
                \qquad m\in\mathbb{N}, \: \rho_1, \dots, \rho_m>0, \: \mu_1,\dots,\mu_m \in \mathbb{R}.
            \end{equation}
            Thus the solution \eqref{k5} can be written as
            \begin{equation}
                u_{\alpha}(w)= w^{2\alpha-2}E_{2;\alpha,\alpha}
                \left(\frac{\lambda^2 w^{2\alpha}}{2^{2\alpha} c^{2\alpha}}\right).
            \end{equation}
        \end{os}

        \begin{os}
            It is simple to prove that the function \eqref{k5} written in terms of the variables $z_1$ and $z_2$, i.e.\
            \begin{align*}
                u_{\alpha}(z_1,z_2)=\sum_{k=0}^{\infty}\left(\frac{\lambda}{2^\alpha c^{\alpha}}\right)^{2k}
                \frac{(z_1z_2)^{\alpha k+\alpha-1}}{[\Gamma(\alpha k+\alpha)]^2},
            \end{align*}
            is a solution of the equation
            \begin{equation}
                (4c^2)^{\alpha}\frac{\partial^{\alpha}}{\partial z_1^{\alpha}}
                \frac{\partial^{\alpha}}{\partial z_2^{\alpha}}u_{\alpha}(z_1,z_2)= \lambda^2 u_{\alpha}(z_1,z_2),
            \end{equation}
            where the partial fractional derivatives $\partial^{\alpha}/\partial z_j^{\alpha}$ are
            in the sense of Riemann--Liouville \citep[][Section 2.3]{pod}.
            This result suggests the validity of the following equality:
            \begin{align*}
                \left(4c^2\frac{\partial}{\partial z_1}
                \frac{\partial}{\partial z_2}\right)^{\alpha}=(4c^2)^{\alpha}
                \frac{\partial^{\alpha}}{\partial z_1^{\alpha}}
                 \frac{\partial^{\alpha}}{\partial z_2^{\alpha}}.
            \end{align*}
        \end{os}

        Going back to the original problem, the equation \eqref{KG}
        admits the solution
        \begin{equation}
            \label{tel}
            u_{\alpha}(x,t)=\left(c^2t^2-x^2\right)^{\alpha-1}E_{2;\alpha,\alpha}
            \left(\frac{\lambda^2}{2^{2 \alpha}c^{2\alpha}}
            \left(c^2t^2-x^2\right)^{\alpha}\right)
        \end{equation}
        and for $\alpha =1$, in view of \eqref{gm1}, we have that
        \begin{align}
            u_1(x,t)&=E_{2;1,1}\left(\frac{\lambda^2}{4 c^{2}}
            \left(c^2t^2-x^2\right)\right)
            =\sum_{k=0}^{\infty}\left(\frac{\lambda}{2c}\sqrt{c^2t^2-x^2}\right)^{2k}
            \frac{1}{(k!)^2}
            = I_0\left(\frac{\lambda}{c}\sqrt{c^2t^2-x^2}\right).
        \end{align}

        \begin{os}
            In the case of the fractional Klein--Gordon equation,
            corresponding to \eqref{KG}, i.e.\
            \begin{equation}
                \left(\frac{\partial^2}{\partial t^2}-c^2\frac{\partial^2}{\partial
                x^2}\right)^{\alpha}u_{\alpha}(x,t)=-\lambda^2 u_\alpha(x,t),
            \end{equation}
            the solution can be written as
            \begin{equation}
                u_{\alpha}(x,t)=(c^2t^2-x^2)^{\alpha-1}\sum_{k=0}^{\infty}(-1)^k
                \frac{\lambda^{2k}}{(2c)^{2\alpha k}}\left(c^2t^2-x^2\right)^{\alpha
                k},
            \end{equation}
            and for $\alpha =1$, reduces to
            \begin{align*}
                u_1(x,t)=J_0\left(\frac{\lambda}{c}\sqrt{c^2t^2-x^2}\right), \qquad |x|<ct.
            \end{align*}
        \end{os}

        Let us now introduce a further analytical result which will be used in Section \ref{fractiol} to construct
        a stochastic process related to the fractional Klein--Gordon equation.

        \begin{te}
            \label{otrebor}
            The function
            \begin{align}
                \label{sangemini}
                F(x,t) & = ct \sum_{k=1}^\infty \left( \frac{\lambda}{2^\alpha c^\alpha} \right)^{2k}
                \frac{(c^2 t^2-x^2)^{\alpha k-1}}{\Gamma(\alpha k+1) \Gamma(\alpha k)}
                = \frac{1}{2c} \frac{\partial}{\partial t} \sum_{k=1}^\infty
                \left( \frac{\lambda}{2^\alpha c^\alpha} \right)^{2k} \frac{(c^2 t^2 -x^2)^{\alpha k}}{
                \left[ \Gamma(\alpha k +1) \right]^2}
            \end{align}
            solves the fractional Klein--Gordon equation \eqref{KG}.
        \end{te}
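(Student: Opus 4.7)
My plan is to work from the second representation
\[
F(x,t) = \frac{1}{2c}\frac{\partial}{\partial t} G(x,t), \qquad G(x,t) := \sum_{k=1}^\infty \left(\frac{\lambda}{2^\alpha c^\alpha}\right)^{2k} \frac{(c^2t^2 - x^2)^{\alpha k}}{[\Gamma(\alpha k+1)]^2},
\]
and to observe that $G$ depends on $(x,t)$ only through $w = \sqrt{c^2t^2-x^2}$. Writing $G(x,t) = \widetilde G(w)$, the same reduction that takes \eqref{KG} into \eqref{k2} converts $(\partial_t^2 - c^2\partial_x^2)^\alpha$ acting on $G$ into $c^{2\alpha}(L_B)^\alpha$ acting on $\widetilde G(w)$.

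The first computational step is to apply $(L_B)^\alpha$ termwise to $\widetilde G$. Formula \eqref{pri} from the proof of Theorem \ref{gallico}, specialised to $\beta = 2\alpha k$, gives
\[
(L_B)^\alpha w^{2\alpha k} = 4^\alpha \left[\frac{\Gamma(\alpha k+1)}{\Gamma(\alpha(k-1)+1)}\right]^2 w^{2\alpha(k-1)}.
\]
The factor $[\Gamma(\alpha k+1)]^2$ cancels exactly the denominator of the $k$-th coefficient of $\widetilde G$; shifting the summation index to $j = k-1$ restores the original series but enlarged by the missing $j=0$ term. A short computation then yields the non-homogeneous identity
\[
(L_B)^\alpha \widetilde G(w) = \frac{\lambda^2}{c^{2\alpha}}\bigl(\widetilde G(w) + 1\bigr),
\]
so that $(\partial_t^2 - c^2\partial_x^2)^\alpha G = \lambda^2(G+1)$.

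To conclude, I would differentiate in $t$. Since $\partial_t$ commutes with the Klein--Gordon operator $\partial_t^2 - c^2 \partial_x^2$, and hence with its fractional power, and since $\partial_t$ annihilates the additive constant, applying $(2c)^{-1}\partial_t$ to the previous identity gives
\[
(\partial_t^2 - c^2\partial_x^2)^\alpha F = \frac{1}{2c}\partial_t\bigl[\lambda^2(G+1)\bigr] = \frac{\lambda^2}{2c}\partial_t G = \lambda^2 F,
\]
which is the claimed equation.

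The main obstacle I foresee is the bookkeeping that produces the extra constant in the identity $(L_B)^\alpha \widetilde G = (\lambda^2/c^{2\alpha})(\widetilde G + 1)$: this constant is precisely what forces $F$ to be defined as a $t$-derivative of $G$, and explains why the sum in \eqref{sangemini} starts at $k=1$ rather than $k=0$ as in \eqref{k5}. A secondary technical point is the justification of the commutation of $\partial_t$ with $(\partial_t^2 - c^2\partial_x^2)^\alpha$; this can be checked directly in the light-cone coordinates $z_1 = ct+x$, $z_2 = ct-x$, where the fractional operator factorises as $(4c^2)^\alpha\partial_{z_1}^\alpha\partial_{z_2}^\alpha$ and $\partial_t = c(\partial_{z_1}+\partial_{z_2})$ trivially commutes with each Riemann--Liouville factor.
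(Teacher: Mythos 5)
Your route is the same as the paper's: the same auxiliary function $G$, the same application of \eqref{pri} with $\beta=2\alpha k$ yielding the non-homogeneous identity $(L_B)^\alpha \widetilde G=\frac{\lambda^2}{c^{2\alpha}}(\widetilde G+1)$, and the same final step of applying $\frac{1}{2c}\partial_t$ and interchanging $\partial_t$ with the fractional operator. The computational half is correct and matches the paper's \eqref{l1} exactly.

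The gap is in the interchange step, which you dismiss twice as automatic: first ``since $\partial_t$ commutes with the Klein--Gordon operator $\partial_t^2-c^2\partial_x^2$, and hence with its fractional power,'' and later because $\partial_t=c(\partial_{z_1}+\partial_{z_2})$ ``trivially commutes with each Riemann--Liouville factor.'' Neither claim is true in general: for Riemann--Liouville derivatives one has
\begin{equation*}
\frac{d}{dz_1}\frac{d^{\alpha}}{dz_1^{\alpha}}f(z_1)=\frac{d^{\alpha}}{dz_1^{\alpha}}\frac{d}{dz_1}f(z_1)+\frac{z_1^{-\alpha}}{\Gamma(1-\alpha)}f(z_1)\bigg|_{z_1=0},
\end{equation*}
so $\partial_{z_1}$ does \emph{not} commute with $\partial_{z_1}^{\alpha}$ unless the boundary contribution vanishes, and consequently $\partial_t\left(\partial_t^2-c^2\partial_x^2\right)^{\alpha}f\neq\left(\partial_t^2-c^2\partial_x^2\right)^{\alpha}\partial_t f$ for generic $f$ --- the paper devotes a separate remark (after the proof of this very theorem) to this point. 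The interchange is legitimate here only because of the specific structure of $G$: its terms are $(z_1z_2)^{\alpha k}$ with $k\geq 1$, so the offending terms vanish, and the paper's proof spends most of its length verifying this by computing both $\partial_{z_1}\partial_{z_1}^{\alpha}\partial_{z_2}^{\alpha}G$ and $\partial_{z_1}^{\alpha}\partial_{z_2}^{\alpha}\partial_{z_1}G$ (and likewise for $z_2$) and checking that they produce the same series. Your proof must carry out this verification (or at least argue that the correction terms vanish for this $G$); as written, the step you call a ``secondary technical point'' is precisely the nontrivial content of the argument, and the justification you offer for it is false as a general statement.
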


        \begin{proof}
            Let
            \begin{align*}
                G(w) = \sum_{k=1}^\infty
                    \left( \frac{\lambda}{2^\alpha c^\alpha} \right)^{2k} \frac{w^{2 \alpha k}}{
                    \left[ \Gamma(\alpha k +1) \right]^2}, \qquad w = \sqrt{c^2t^2-x^2}.
            \end{align*}
            By using \eqref{pri}, we have that
            \begin{align}
                \label{l1}
                (L_B)^{\alpha}G(w) = \left( \frac{\lambda}{c^\alpha} \right)^2 \left[ G(w) +1 \right].
            \end{align}
            By passing from the variable $w$ to $(x,t)$, we have from
            \eqref{l1} that
            \begin{equation}
                \left(\frac{\partial^2}{\partial t^{2}}-c^2\frac{\partial^2}{\partial x^2}\right)^{\alpha}G(x,t)
                = \lambda^2 \left[G(x,t)+1\right],
            \end{equation}
            and thus
            \begin{align}
                \label{com}
                \frac{\partial}{\partial t}
                \left(\frac{\partial^2}{\partial t^{2}}-c^2\frac{\partial^2}{\partial x^2}\right)^{\alpha}G(x,t)
                = \lambda^2 \frac{\partial}{\partial t}G(x,t).
            \end{align}

            We now show that
            \begin{align*}
                \frac{\partial}{\partial t}\left(\frac{\partial^2}{\partial t^2}-c^2
                \frac{\partial^2}{\partial x^2}\right)^{\alpha}
                G(x,t)=\left(\frac{\partial^2}{\partial t^2}-c^2\frac{\partial^2}{\partial x^2}
                \right)^{\alpha}
                \frac{\partial}{\partial t}G(x,t).
            \end{align*}
            For
            \begin{equation}
                \nonumber
                \begin{cases}
                    z_1=ct+x,\\
                    z_2=ct-x,
                \end{cases}
            \end{equation}
            we have that
            \begin{align*}
                \frac{\partial}{\partial t} = c\frac{\partial}{\partial z_1}+c
                \frac{\partial}{\partial z_2},
                \qquad \frac{\partial}{\partial x} = \frac{\partial}{\partial z_1}-
                \frac{\partial}{\partial
                z_2},
            \end{align*}
            and therefore
            \begin{align*}
                \left(\frac{\partial^2}{\partial t^2}-c^2\frac{\partial^2}{\partial x^2}\right)^{\alpha}
                = \left(4c^2 \frac{\partial^2}{\partial z_1 \partial z_2}\right)^{\alpha}.
            \end{align*}
            Hence
            \begin{align}
                \nonumber &\left[\frac{\partial}{\partial t}\left(\frac{\partial^2}{\partial t^2}-
                c^2\frac{\partial^2}{\partial x^2}\right)^{\alpha}-
                \left(\frac{\partial^2}{\partial t^2}-c^2\frac{\partial^2}{\partial x^2}\right)^{\alpha}
                \frac{\partial}{\partial t}\right]G(x,t)\\
                \nonumber & = 4^{\alpha}c^{2\alpha+1}\left[
                \left(\frac{\partial}{\partial z_1}+\frac{\partial}{\partial z_2}\right)
                \left(\frac{\partial}{\partial z_1}\frac{\partial}{\partial z_2}\right)^{\alpha}
                -\left(\frac{\partial}{\partial z_1}\frac{\partial}{\partial z_2}\right)^{\alpha}
                \left(\frac{\partial}{\partial z_1}+\frac{\partial}{\partial z_2}\right)\right]G(z_1,z_2).
            \end{align}
            It is simple to prove that
            \begin{align}
                \nonumber \frac{\partial}{\partial z_1}\frac{\partial^{\alpha}}{\partial z_1^{\alpha}}
                \frac{\partial^{\alpha}}{\partial z_2^{\alpha}}G(z_1,z_2)=
                \frac{\partial^{\alpha}}{\partial z_1^{\alpha}}\frac{\partial^{\alpha}}{\partial z_2^{\alpha}}
                \frac{\partial}{\partial z_1}G(z_1,z_2)
                = \frac{1}{z_1} \sum_{k=1}^{\infty}\left(\frac{\lambda}{2^\alpha c^\alpha}\right)^{2k}
                \frac{(z_1 z_2)^{\alpha k - \alpha}}{\Gamma(\alpha k - \alpha) \Gamma(\alpha k -\alpha +1)},\\
                \nonumber \frac{\partial}{\partial z_2}
                \frac{\partial^{\alpha}}{\partial z_1^{\alpha}}\frac{\partial^{\alpha}}
                {\partial z_2^{\alpha}}G(z_1,z_2)=\frac{\partial^{\alpha}}{\partial z_1^{\alpha}}
                \frac{\partial^{\alpha}}{\partial z_2^{\alpha}}\frac{\partial}{\partial z_2}G(z_1,z_2)
                =\frac{1}{z_2} \sum_{k=1}^{\infty}\left(\frac{\lambda}{2^\alpha c^\alpha}\right)^{2k}
                \frac{(z_1 z_2)^{\alpha k - \alpha}}{\Gamma(\alpha k - \alpha) \Gamma(\alpha k -\alpha +1)},
            \end{align}
            where the partial fractional derivatives are in the sense of Riemann--Liouville (see Remark 3.5).
            We have just shown in fact that
            \begin{align*}
                \left[\frac{\partial}{\partial t}\left(\frac{\partial^2}{\partial t^2}-c^2
                \frac{\partial^2}{\partial x^2}\right)^{\alpha}
                -\left(\frac{\partial^2}{\partial t^2}-c^2\frac{\partial^2}{\partial x^2}\right)^{\alpha}
                \frac{\partial}{\partial t}\right]G(x,t)=0.
            \end{align*}
            Returning to \eqref{com}, we have
            \begin{equation}
                \nonumber
                \frac{\partial}{\partial t}\left(\frac{\partial^2}{\partial t^2}-c^2
                \frac{\partial^2}{\partial x^2}\right)^{\alpha}
                G(x,t)=\left(\frac{\partial^2}{\partial t^2}-c^2\frac{\partial^2}{\partial x^2}\right)^{\alpha}
                \frac{\partial}{\partial t}G(x,t)=
                \lambda^2 \frac{\partial}{\partial t}G(x,t).
            \end{equation}
            Being
            \begin{align*}
                \frac{\partial}{\partial t}G(x,t)=2c \, F(x,t),
            \end{align*}
            we finally arrive at
            \begin{equation}
                \left(\frac{\partial^2}{\partial t^2}-c^2\frac{\partial^2}{\partial x^2}\right)^{\alpha}F(x,t)
                = \lambda^2 F(x,t).
            \end{equation}
        \end{proof}

        \begin{os}
            We note that for $\alpha =1$
            \begin{align*}
                F(x,t)=\frac{1}{2c} \frac{\partial}{\partial t}I_0\left(\frac{\lambda}{c}\sqrt{c^2t^2-x^2}\right),
                \qquad |x|<ct.
            \end{align*}
        \end{os}

        \begin{os}
            We observe that in general
            \begin{align*}
                \frac{\partial}{\partial t}\left(\frac{\partial^2}{\partial t^2}-c^2\frac{\partial^2}{\partial x^2}
                \right)^{\alpha}f(x,t)\neq \left(\frac{\partial^2}{\partial t^2}-c^2\frac{\partial^2}{\partial x^2}
                \right)^{\alpha}\frac{\partial}{\partial t}f(x,t).
            \end{align*}
            Indeed, given a certain function $f(z_1,z_2)$,
            \begin{align*}
                \frac{\partial}{\partial z_1}\frac{\partial^{\alpha}}{\partial z_1^{\alpha}}
                \frac{\partial^{\alpha}}{\partial z_2^{\alpha}}f(z_1,z_2)\neq
                \frac{\partial^{\alpha}}{\partial z_1^{\alpha}}
                \frac{\partial^{\alpha}}{\partial z_2^{\alpha}}\frac{\partial}{\partial z_1}f(z_1,z_2),
            \end{align*}
            this is due to the fact that the fractional derivatives of order $\alpha \in (0,1)$
            do not commute in general with the ordinary derivatives (see e.g. \citet[][Section 2.3.5]{pod}):
            \begin{align*}
                \frac{d}{dz_1}\frac{d^{\alpha}}{dz_1^{\alpha}}f(z_1)=
                \frac{d^{\alpha}}{d z_1^{\alpha}}\frac{d}{d z_1}f(z_1)+\frac{z_1^{-\alpha}}{\Gamma(1-\alpha)}f(z_1)
                \bigg|_{z_1=0}.
            \end{align*}
        \end{os}

    \section{Fractional telegraph-type processes}

        \label{fractiol}
        The classical symmetric telegraph process is defined as
        \begin{equation}
            \mathcal{T}(t)=V(0)\int_0^t(-1)^{\mathcal{N}(s)}ds, \qquad t \ge 0,
        \end{equation}
        where $V(0)$ is a two-valued random variable independent of
        the Poisson process $\mathcal{N}(t)$, $t\geq 0$. The telegraph
        process is a finite-velocity random motion where changes of
        direction are governed by the homogeneous Poisson process
        $\mathcal{N}(t)$.

        It is well-known that \citep[see, for example,][and
        the references therein]{ale}

        \begin{equation}
            P\{\mathcal{T}(t)\in dx|\mathcal{N}(t)=2k+1\}=
            dx\frac{(2k+1)!}{(k!)^2}\frac{\left(c^2t^2-x^2\right)^k}{(2ct)^{2k+1}},
            \qquad k\geq 0, \: |x|< ct,
            \label{cot}
        \end{equation}
        \begin{equation}
            P\{\mathcal{T}(t)\in dx|\mathcal{N}(t)=2k\}=
            dx\frac{ct(2k)!}{k!(k-1)!}\frac{\left(c^2t^2-x^2\right)^{k-1}}{(2ct)^{2k}},
            \qquad k\geq 1, \: |x|< ct,
            \label{cot1}
        \end{equation}

        \begin{align}
            &P\{\mathcal{T}(t)\in dx\}=dx \frac{e^{-\lambda t}}{2c}\left[\lambda I_0\left(\frac{\lambda}{c}
            \sqrt{c^2t^2-x^2}\right)+\frac{\partial}{\partial t}I_0\left(\frac{\lambda}{c}
            \sqrt{c^2t^2-x^2}\right)
            \right], \qquad  |x|<ct,
            \label{tp}
        \end{align}

        \begin{equation}
            P\{\mathcal{T}(t)=\pm ct\}=\frac{e^{-\lambda t}}{2}.
            \label{diso1}
        \end{equation}

        The absolutely continuous component of the distribution of the
        telegraph process \eqref{tp} is the solution to the Cauchy
        problem
        \begin{equation}
            \label{te1}
            \begin{cases}
                \frac{\partial^2 p}{\partial t^2}+2\lambda \frac{\partial p}{\partial
                t}= c^2 \frac{\partial^2 p}{\partial x^2},\\
                p(x,0)=\delta(x),\\
                \frac{\partial p}{\partial t}(x,t)\bigg|_{t=0}=0.
            \end{cases}
        \end{equation}
        By means of the transformation $p(x,t)= e^{-\lambda t}u(x,t)$,
        equation \eqref{te1} is converted into the Klein--Gordon equation
        \eqref{KG} for $\alpha =1$.

        Our aim here is to construct a fractional generalization of the
        telegraph process whose absolutely continuous component of its
        distribution is related to the fractional Klein--Gordon equation
        \eqref{KG}.
        We first recall the fractional Poisson process, $\mathcal{N}^{\alpha}(t)$, $t\geq 0$,
        whose one-dimensional distribution has the following form
        \begin{align}
            \label{po}
            P\{\mathcal{N}^{\alpha}(t)=k\}= \frac{1}{E_{\alpha, 1}
            (\lambda t^{\alpha})}\frac{\left(\lambda t^{\alpha}\right)^k}{\Gamma(\alpha k +1)},
            \qquad \alpha \in (0,1], \: k\geq 0.
        \end{align}
        Such fractional Poisson process was first discussed by \citet{Beghin}, where the distribution
        slightly differs from \eqref{po}.
        The probability generating function of \eqref{po} reads
        \begin{equation}
            \label{gen}
            G_{\alpha}(u,t)=\frac{E_{\alpha, 1}(u\lambda t^{\alpha})}{E_{\alpha,1}(\lambda
            t^{\alpha})}, \qquad |u|<1.
        \end{equation}
        In \citet{Bala}, a general class of weighted Poisson processes has
        been introduced, of which \eqref{po} is a special case since
        \begin{equation}
        P\{\mathcal{N}^{\alpha}(t)=k\}=\frac{\frac{k!}{\Gamma(\alpha k+1)}P\{\mathcal{N}
        (t^{\alpha})=k\}}{\sum_{j=0}^{\infty}\frac{j!}{\Gamma(\alpha j+1)}P\{\mathcal{N}
        (t^{\alpha})=j\}}.
        \end{equation}

        Fractionality of \eqref{po} is due to
        the fact that \eqref{gen} solves the fractional equation
        \begin{align*}
            \frac{{}^C \partial^{\alpha}}{\partial u^{\alpha}}G_{\alpha}(u^{\alpha},t)
            =\lambda t^{\alpha}G_{\alpha}(u^{\alpha},t),
        \end{align*}
        where ${}^C \partial^\alpha/ \partial u^\alpha$ is the so-called Caputo fractional derivative
        \citep[][Section 2.4]{pod}.
        We note that
        \begin{align}
            \label{mlr}
            &\sum_{k=0}^{\infty} P\{\mathcal{N}^{\alpha}(t)=k\}=
            \sum_{k=0}^{\infty} P\{\mathcal{N}^{\alpha}(t)=2k+1\}+\sum_{k=0}^{\infty} P\{\mathcal{N}^{\alpha}(t)=2k\}\\
            \nonumber &= \frac{\lambda t^{\alpha}E_{2\alpha,\alpha + 1}(\lambda^2t^{2\alpha})}{E_{\alpha, 1}
            (\lambda t^{\alpha})}
            + \frac{E_{2\alpha, 1}(\lambda^2 t^{2\alpha})}{E_{\alpha, 1}(\lambda t^{\alpha})},
        \end{align}
        such that, for $\alpha = 1$, we have
        \begin{align}
            \nonumber \sum_{k=0}^{\infty}P\{\mathcal{N}(t)=k\}= e^{-\lambda t}\left(\sinh(\lambda t)
            +\cosh(\lambda t)\right)=1.
        \end{align}

        The solution \eqref{sangemini} can be written as
        \begin{equation}
            F(x,t) \, dx = E_{\alpha, 1}(\lambda t^{\alpha})\sum_{k=1}^{\infty}
            P\{\mathcal{T}^{\alpha}(t)\in dx| \mathcal{N}^{\alpha}(t)=2k\}P\{\mathcal{N}^{\alpha}(t)=2k\},
        \end{equation}
        where
        \begin{equation}
            \label{con}
            P\{\mathcal{T}^{\alpha}(t)\in dx| \mathcal{N}^{\alpha}(t)=2k\}
            =dx \frac{\left(c^2t^2-x^2\right)^{\alpha k -1}}{(2ct)^{2k\alpha-1}}
            \frac{\Gamma(2\alpha k)}{\left[\Gamma(\alpha k)\right]^2},\qquad k\geq 1,\:
            |x|<ct,
        \end{equation}
        and $P\{\mathcal{N}^{\alpha}(t)=2k\}$ is given by \eqref{po}. We used the symbol $\mathcal{T}^{\alpha}(t)$
        in order to consider a fractional-type generalization of the telegraph process
        that includes for $\alpha =1$ the classical one.
        The conditional densities \eqref{con} can be found as the
        laws of the r.v.'s
        \begin{equation}
            \label{os}
            \mathcal{T}^{\alpha}(t)= ct\left[T^{\alpha}_{(n^+)}-(1-T^{\alpha}_{(n^+)})\right],
        \end{equation}
        where $T^{\alpha}_{(n^+)}$ possesses probability
        density given by
        \begin{equation}
            \label{os1}
            f_{T^{\alpha}_{(n^+)}}(w) = \frac{\Gamma(n\alpha)}
            {\Gamma(n^+\alpha)\Gamma((n-n^+)\alpha)} w^{n^+\alpha-1}(1-w)^{(n-n^+)\alpha-1}, \qquad 0<w<1.
        \end{equation}
        The r.v.\ defined in \eqref{os} can be regarded as a rightward
        displacement of random length of $ct \, T^{\alpha}_{(n^+)}$
        and a leftward displacement for the remaining interval of
        time.

        If $\alpha=1$, the r.v.'s $\mathcal{T}^{1}(t)$ coincides
        in distribution with
        \begin{equation}
            \label{ex}
            \mathcal{T}^1(t) \overset{\text{d}}{=} ct\left[T_1-(T_2-T_1)+\dots+(-1)^{n+1}(t-T_n)\right],
        \end{equation}
        where $t\, T_k$, $k=1,\dots,n$ are the instants where the Poisson
        events of $\mathcal{N}^1(t)$ occur (see \citet{ale}). In force of the
        exchangeability of the r.v.'s $T_1,\dots, T_n$, we can establish the following equality in distribution
        \begin{align*}
            \mathcal{T}^1(t)
            \overset{\text{d}}{=} ct\left[T_{(n^+)}-(1-T_{(n^+)})\right].
        \end{align*}
        Note however, that in the fractional case a similar equality in
        distribution cannot be established. This is due to the fact
        that we do not have the multivariate distribution
        $\left(\mathcal{T}^{\alpha}_1, \mathcal{T}^{\alpha}_2,\dots, \mathcal{T}^{\alpha}_n\right)$,
        where $\mathcal{T}^{\alpha}_j$ are the instants of occurrence of the
        events of the fractional Poisson process.\\
        The distribution of \eqref{os} coincides with
        \eqref{con}. Indeed,
        \begin{align}
            \label{cond}
            P\{\mathcal{T}^{\alpha}(t)\in dx|\mathcal{N}^{\alpha}(t)=2k, V(0)\}
            &=\frac{d}{dx}P\Bigl\{\mathcal{T}^{\alpha}_{(n^+)}<\frac{ct+x}{2ct}\Bigr|\mathcal{N}^{\alpha}(t)
            =2k, V(0)\Bigr\} \,dx\\
            \nonumber &=dx \frac{\Gamma(2k\alpha)}
            {(2ct)[\Gamma(\alpha k)]^2}\left[\left(\frac{ct+x}{2ct}\right)^{k\alpha-1}
            \left(\frac{ct-x}{2ct}\right)^{k\alpha-1}\right]\\
            \nonumber &=dx \frac{\left(c^2t^2-x^2\right)^{\alpha k -1}}{(2ct)^{2k\alpha-1}}
            \frac{\Gamma(2\alpha k)}{\left[\Gamma(\alpha k)\right]^2}, \qquad k \ge 1.
        \end{align}
        A similar approach can be adopted when the
        fractional Poisson process $\mathcal{N}^{\alpha}(t)$ takes an
        odd number of events. In this case the displacement \eqref{os}
        involves the r.v.\ $T^{\alpha}_{(n^+)}$ with
        density
        \begin{equation}
            \label{samsung}
            f_{T^{\alpha}_{(n^+)}}(w) = \frac{\Gamma(n\alpha+1)}
            {\Gamma(n^+\alpha+\frac{1-\alpha}{2})\Gamma((n-n^+)\alpha+\frac{1-\alpha}{2}+\alpha)}
            w^{n^+\alpha+\frac{1-\alpha}{2}-1}
            (1-w)^{(n-n^+)\alpha+\frac{1-\alpha}{2}+\alpha -1},\quad w\in(0,1).
        \end{equation}
        The conditional distribution when $\mathcal{N}^{\alpha}(t)=2k+1$
        reads
        \begin{align}
            \label{cond2}
            &P\{\mathcal{T}^{\alpha}(t)\in dx|\mathcal{N}^{\alpha}(t)=2k+1, V(0)\} \\
            \nonumber &=\frac{d}{dx}P\Bigl\{\mathcal{T}^{\alpha}_{(n^+)}<\frac{ct+x}{2ct}\Bigr| \mathcal{N}^{\alpha}(t)
            =2k+1, V(0)\Bigr\} \, dx\\
            \nonumber &=dx \frac{\Gamma(2k\alpha+\alpha+1)}
            {(2ct)[\Gamma(\alpha k+\alpha+\frac{1-\alpha}{2})]^2}\left[\left(\frac{ct+x}{2ct}\right)^{k\alpha+
            \alpha+\frac{1-\alpha}{2}-1}
            \left(\frac{ct-x}{2ct}\right)^{k\alpha+\alpha+\frac{1-\alpha}{2}-1}\right]\\
            \nonumber &=dx\frac{\left(c^2t^2-x^2\right)^{\alpha k +\alpha+\frac{1-\alpha}{2}-1}}{(2ct)^{2k\alpha+\alpha}}
            \frac{\Gamma(2\alpha k +\alpha+1)}{[\Gamma(\alpha k+\alpha+
            \frac{1-\alpha}{2})]^2}, \qquad k \ge 0.
        \end{align}
        We observe that for $\alpha=1$, the conditional distributions
        \eqref{cond} and \eqref{cond2} coincide with \eqref{cot1} and \eqref{cot},
        respectively.
        Clearly \eqref{samsung} for $\alpha =1$ coincides with the
        distribution of the $n^+$-th order statistics related to the occurrence
        of Poissonian events, while for
        $0<\alpha<1$, is a Beta random variable. Furthermore,
        note that also \eqref{os1} reduces to the
        distribution of the $n^+$-th order statistics but in this case
        we have to consider a number $n+1$ of random variables.

        In view of all these results we arrive at
        the following statement.
        \begin{te}
            The fractional telegraph-type process $\mathcal{T}^{\alpha}(t)$,
            $t\geq 0$, has the following probability law
            \begin{align}
                \label{dis}
                p^{\alpha}(x,t)= {} &\frac{1}{E_{\alpha, 1}(\lambda t^{\alpha})}\bigg[ct \sum_{k=1}^{\infty}
                \left(\frac{\lambda}{2^\alpha c^{\alpha}}\right)^{2k}
                \frac{(c^2t^2-x^2)^{\alpha k -1}}{\Gamma(\alpha k)\Gamma(\alpha k +1)}
                + \sum_{k=0}^{\infty}\left(\frac{\lambda}{2^\alpha c^{\alpha}}\right)^{2k+1}
                \frac{(c^2t^2-x^2)^{\alpha k + \frac{\alpha-1}{2}}}{[\Gamma(\alpha k
                + \frac{1+\alpha}{2})]^2}\bigg]\\
                \nonumber & +\frac{1}{2 E_{\alpha, 1}(\lambda
                t^{\alpha})}[\delta(x+ct)+\delta(x-ct)], \qquad \alpha \in
                (0,1], |x|\leq ct.
            \end{align}
        \end{te}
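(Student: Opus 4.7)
The plan is to apply the law of total probability by conditioning on $\mathcal{N}^\alpha(t)$, writing
\[
P\{\mathcal{T}^\alpha(t)\in dx\} = \sum_{n=0}^\infty P\{\mathcal{T}^\alpha(t)\in dx \mid \mathcal{N}^\alpha(t)=n\}\, P\{\mathcal{N}^\alpha(t)=n\},
\]
and splitting the sum into three contributions: the singular term $n=0$, the even terms $n=2k$ with $k\geq 1$, and the odd terms $n=2k+1$ with $k\geq 0$. Both families of conditional laws \eqref{cond} and \eqref{cond2} together with the fractional Poisson probabilities \eqref{po} are available, so the theorem reduces to checking that these pieces assemble into the right-hand side of \eqref{dis}.

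The $n=0$ term is immediate: with no change of direction the particle travels ballistically at velocity $V(0)=\pm c$ (each with probability $1/2$), so $\mathcal{T}^\alpha(t)=\pm ct$; combining with $P\{\mathcal{N}^\alpha(t)=0\}=1/E_{\alpha,1}(\lambda t^\alpha)$ recovers the singular part of \eqref{dis}. For the even terms I would substitute \eqref{cond} and \eqref{po} at $n=2k$ and simplify: the identities $\Gamma(2\alpha k+1)=2\alpha k\,\Gamma(2\alpha k)$ and $\Gamma(\alpha k+1)=\alpha k\,\Gamma(\alpha k)$ collapse the gamma ratio to $1/[\Gamma(\alpha k)\Gamma(\alpha k+1)]$, while the power $t^{2\alpha k}/(2ct)^{2\alpha k-1}$ reduces to $2ct/(2^\alpha c^\alpha)^{2k}$; extracting the overall $ct$ produces the first bracketed sum in \eqref{dis}. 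The odd case is entirely parallel: inserting \eqref{cond2} and \eqref{po} at $n=2k+1$ the $\Gamma(2\alpha k+\alpha+1)$ factors cancel, the identities $\alpha+\tfrac{1-\alpha}{2}-1=\tfrac{\alpha-1}{2}$ and $\alpha+\tfrac{1-\alpha}{2}=\tfrac{1+\alpha}{2}$ rewrite the exponents in the required form, and the remaining powers of $2c$ and $t$ produce the coefficient $(\lambda/(2^\alpha c^\alpha))^{2k+1}$ and thus the second bracketed sum.

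The main difficulty is not conceptual but purely the bookkeeping of gamma-function and power-of-$t$ identities needed so that the three pieces telescope into the compact closed form \eqref{dis}. The substantive probabilistic content—namely that the conditional laws are the Beta-type densities underlying \eqref{cond} and \eqref{cond2}, obtained from the representation \eqref{os} of $\mathcal{T}^\alpha(t)$ via the random variable $T^\alpha_{(n^+)}$ of \eqref{os1} and \eqref{samsung}, and the fact (used implicitly) that $F(x,t)$ in Theorem \ref{otrebor} coincides with $E_{\alpha,1}(\lambda t^\alpha)$ times the even-index partial sum—has already been established earlier in the section, so no new fractional-calculus input is required at this step.
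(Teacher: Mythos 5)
Your proposal is correct and follows essentially the same route as the paper: the paper's proof also identifies the singular part with the event $\{\mathcal{N}^{\alpha}(t)=0\}$ of weight $1/E_{\alpha,1}(\lambda t^{\alpha})$ and obtains the absolutely continuous part by summing the conditional densities \eqref{cond} and \eqref{cond2} against the even/odd fractional Poisson weights \eqref{po}, exactly as you do. Your explicit gamma-function and power-of-$t$ bookkeeping (which checks out) is simply the algebra the paper leaves implicit, and the remark about $F(x,t)$ from Theorem \ref{otrebor} is not needed for this step.
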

        \begin{proof}
            The singular component of the distribution \eqref{dis}
            \begin{align*}
                p^{\alpha}_s(x,t)=\frac{1}{2 E_{\alpha, 1}(\lambda t^{\alpha})}[\delta(x+ct)+\delta(x-ct)],
            \end{align*}
            is due to the case where no Poisson event occurs up to time $t$
            and the moving particle reaches the endpoints of the interval
            $[-ct,+ct]$ with probability
            \begin{align*}
                P\{\mathcal{N}^{\alpha}(t)=0\}=\frac{1}{E_{\alpha, 1}(\lambda t^{\alpha})}.
            \end{align*}
            The absolutely continuous component of \eqref{dis}
            \begin{align}\label{ramon}
                &p^{\alpha}_{ac}(x,t)=p^{\alpha}(x,t)-p_s^{\alpha}(x,t)\\
                \nonumber &=\sum_{k=1}^{\infty}
                \frac{P\{\mathcal{T}^{\alpha}(t)\in dx|\mathcal{N}^{\alpha}(t)=2k\}P\{\mathcal{N}^{\alpha}(t)=2k\}}{dx}
                \\
                \nonumber & +\sum_{k=0}^{\infty}
                \frac{P\{\mathcal{T}^{\alpha}(t)\in
                dx|\mathcal{N}^{\alpha}(t)=2k+1\}
                P\{\mathcal{N}^{\alpha}(t)=2k+1\}}{dx}
                , \quad |x|<ct, \forall
                t>0.
            \end{align}
            The conditional distributions appearing in \eqref{ramon}
            are given by \eqref{cond} and \eqref{cond2}, while the
            fractional Poisson probabilities
            $P\{\mathcal{N}^{\alpha}(t)=k\}$ are obtained by
            specializing \eqref{po} in the even and odd cases.
        \end{proof}

        From Fig.~1 emerges that for increasing values of
        $\alpha$, the density of $p^{\alpha}_{ac}(x,t)$ behaves as
        that of the telegraph process except near the endpoints $x=
        \pm ct$, where it tends to infinity. Note also that the smaller is $\alpha$,
        the slower the convergence towards
        a bell-shaped form.

        \begin{figure}
            \centering
            \includegraphics[scale=.73]{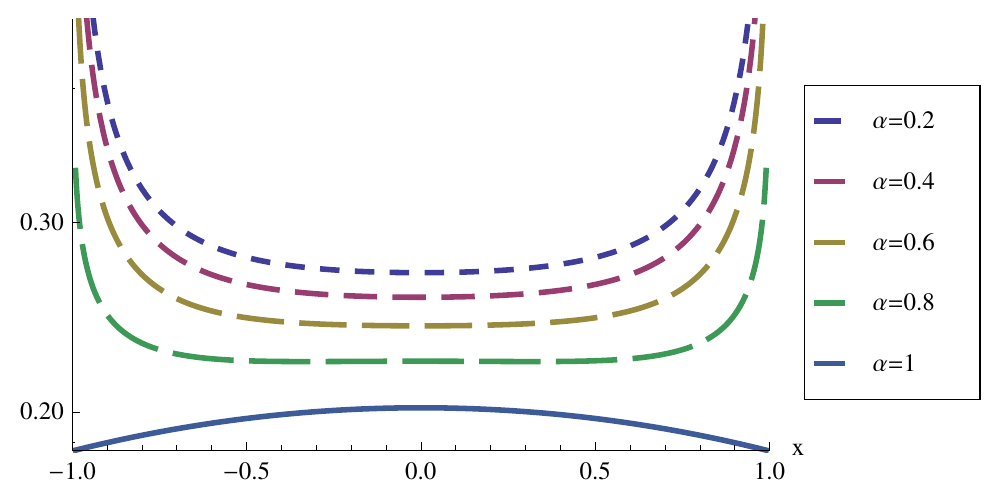}
            \includegraphics[scale=.73]{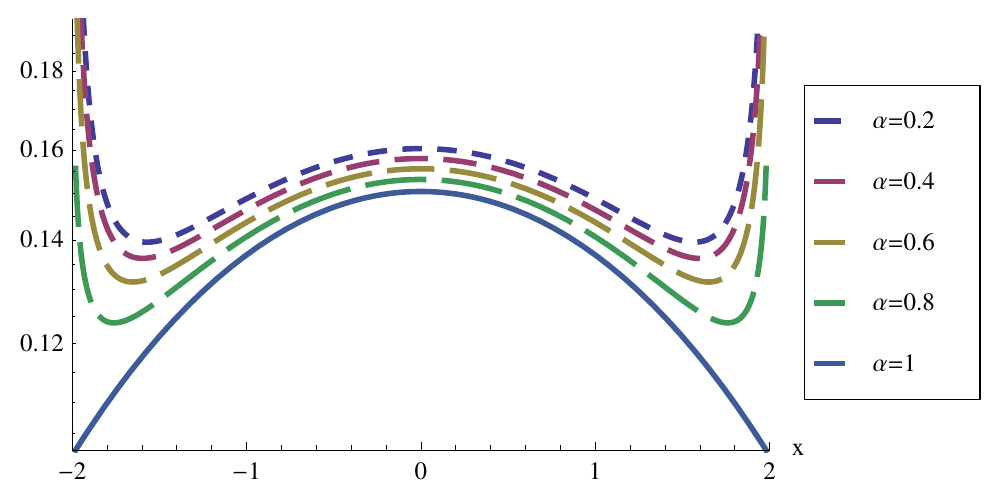}
            \includegraphics[scale=.73]{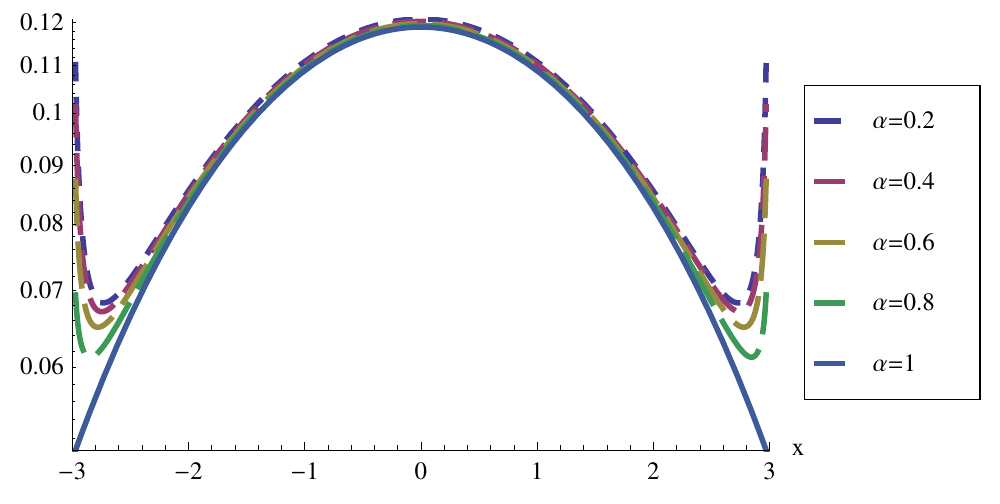}
            \caption{Plot of the absolutely continuous component of the distribution \eqref{dis}
            for different values of $\alpha$, with $(c,\lambda)=(1,1)$ and $t=1$ (top left), $t=2$ (top right)
            $t=3$ (bottom). The figures are in logarithmic scale.}
            \label{figura}
        \end{figure}

        \begin{os}
        Our approach consists in finding solutions of fractional
        Klein-Gordon equations and by means of them construct the
        probability distributions of fractional versions of
        finite-velocity random motions. We can see afterwards that
        the found solutions $p^{\alpha}(x,t)$ satisfy the same
        initial conditions as the classical telegraph process, i.e.
        \begin{equation}\nonumber
            \begin{cases}
                p^{\alpha}(x,0)=\delta(x),\\
                \frac{\partial p^{\alpha}}{\partial t}(x,t)\bigg|_{t=0}=0.
            \end{cases}
        \end{equation}
        \end{os}

        \begin{os}
            For $\alpha =1$, the distribution \eqref{dis} reduces to the sum of
            \eqref{tp} (absolutely continuous component) and \eqref{diso1}
            (singular component).
        \end{os}

        \begin{os}
            For all $k\geq 1$, there exists an order of fractionality $0<\alpha<1$ for which the conditional
            distributions are uniform
            in $[-ct,+ct]$. In particular, for all fixed values of $k \ge 0$, the distribution \eqref{cond2} is uniform
            for
            \begin{align*}
                \alpha = \frac{1}{2k+1},
            \end{align*}
            while for values of $k \ge 1$, the distribution \eqref{cond} is uniform for
            \begin{align*}
                \alpha = \frac{1}{k}.
            \end{align*}

            The densities \eqref{cond2} for
            \begin{align*}
                0 \leq k<\bigg\lfloor\frac{1-\alpha}{2 \alpha}\bigg\rfloor,
            \end{align*}
            and the densities \eqref{cond} for
            \begin{align*}
                1\leq k<\bigg\lfloor\frac{1}{\alpha}\bigg\rfloor,
            \end{align*}
            display an arcsine behaviour, that is, the densities approach to $+\infty$ for $x\rightarrow \pm ct$.
            In the opposite case, they have a bell-shaped form as happens with
            \eqref{cot} and \eqref{cot1} for the classical telegraph process (see Fig.\ \ref{figura}).
            This is an important feature of the fractional telegraph process.
        \end{os}

        \begin{figure}
            \centering
            \includegraphics[scale=.73]{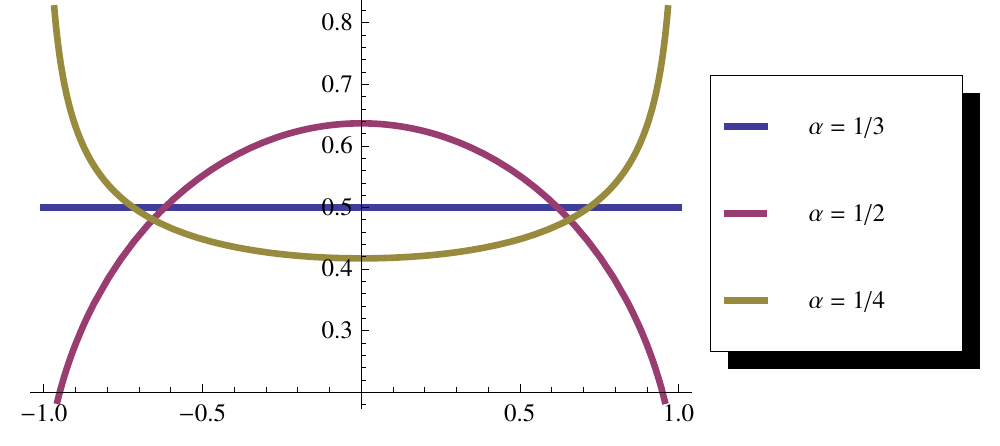}
            \includegraphics[scale=.73]{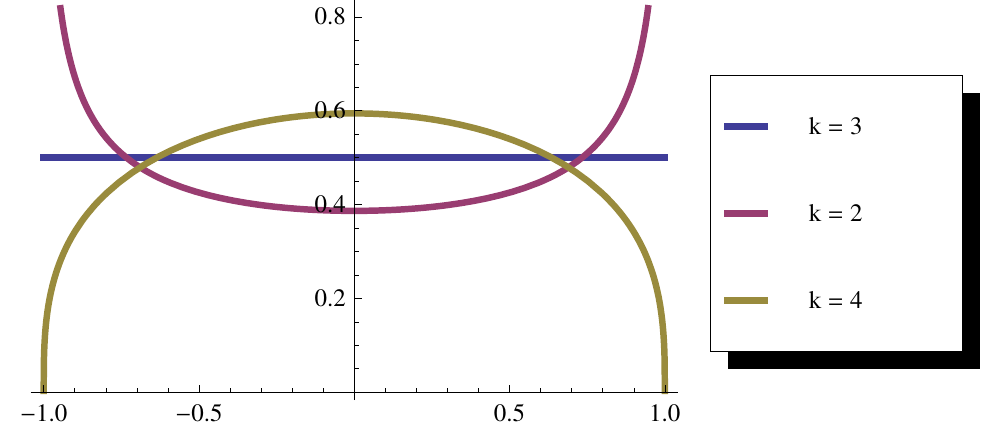}
            \caption{Plot of the conditional density \eqref{con} for $c=1$, $t=1$; Left side:
            $k=3$, $\alpha = (1/3,1/2,1/4)$,
            Right side: $\alpha=1/3$, $k=(3,2,4)$.}
            \label{figura2}
        \end{figure}

        \begin{lem}
            \label{ammel}
            The function
            \begin{align}
                H(x,t) & = E_{\alpha,1}(\lambda t^\alpha) \sum_{k=0}^\infty P \{ \mathcal{T}^\alpha \in dx
                | \mathcal{N}^\alpha (t) = 2k +1 \} P \{ \mathcal{N}^\alpha(t) = 2k+1 \} \\
                & = \sum_{k=0}^{\infty}\left(\frac{\lambda}{2^\alpha c^{\alpha}}\right)^{2k+1}
                \frac{(c^2t^2-x^2)^{\alpha k + \frac{\alpha -1}{2}}}{[\Gamma(\alpha k
                + \frac{1+\alpha}{2})]^2}, \notag
            \end{align}
            is a solution to the non-homogeneous fractional Klein--Gordon equation
            \begin{align}
                \label{nonono}
                \left( \frac{\partial^2}{\partial t^2} -c^2 \frac{\partial^2}{\partial x^2} \right)^\alpha
                u_\alpha(x,t) = \lambda^2 u_\alpha(x,t) + \lambda 2^\alpha c^\alpha
                \frac{(\sqrt{c^2t^2-x^2})^{-\alpha-1}}{[\Gamma(\frac{1-\alpha}{2})]^2}.
            \end{align}
        \end{lem}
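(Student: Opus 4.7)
My plan is to reduce the problem to the fractional Bessel operator $(L_B)^\alpha$ and then use an index-shift argument analogous to the one in Theorem~\ref{gallico}. After the change of variables $w = \sqrt{c^2 t^2 - x^2}$, the operator $(\partial_t^2 - c^2 \partial_x^2)^\alpha$ acting on functions of $w$ alone becomes $c^{2\alpha}(L_B)^\alpha$, and $H(x,t)$ reads
\begin{equation*}
H(w) = \sum_{k=0}^{\infty}\left(\frac{\lambda}{2^\alpha c^{\alpha}}\right)^{2k+1}\frac{w^{2\alpha k + \alpha - 1}}{\bigl[\Gamma(\alpha k + \frac{1+\alpha}{2})\bigr]^2}.
\end{equation*}
Applying formula \eqref{pri} to each monomial $w^{\beta_k}$ with $\beta_k = 2\alpha k + \alpha - 1$ (the hypothesis $\beta_k/2 + 1>0$ is immediate since $\alpha>0$), the numerator $[\Gamma(\beta_k/2+1)]^2 = [\Gamma(\alpha k + \frac{1+\alpha}{2})]^2$ cancels the denominator of $H$, producing
\begin{equation*}
(L_B)^\alpha H(w) = 4^\alpha \sum_{k=0}^\infty \left(\frac{\lambda}{2^\alpha c^\alpha}\right)^{2k+1}\frac{w^{2\alpha k - \alpha - 1}}{\bigl[\Gamma(\alpha k + \frac{1-\alpha}{2})\bigr]^2}.
\end{equation*}

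I then isolate the $k = 0$ term and reindex the remainder by $k' = k - 1$. The shifted tail picks up an extra factor $(\lambda/(2^\alpha c^\alpha))^2$ and a $w^{2\alpha}$, while the gamma argument $\alpha k + (1-\alpha)/2$ becomes $\alpha k' + \alpha + (1-\alpha)/2 = \alpha k' + (1+\alpha)/2$; hence the tail reconstitutes $H(w)$ multiplied by $4^\alpha (\lambda/(2^\alpha c^\alpha))^2 = \lambda^2/c^{2\alpha}$. The isolated $k = 0$ term equals
\begin{equation*}
\frac{4^\alpha \lambda}{2^\alpha c^\alpha}\,\frac{w^{-\alpha - 1}}{[\Gamma(\frac{1-\alpha}{2})]^2} = \frac{2^\alpha \lambda}{c^\alpha}\,\frac{w^{-\alpha - 1}}{[\Gamma(\frac{1-\alpha}{2})]^2}.
\end{equation*}
Multiplying both sides by $c^{2\alpha}$ to revert from $(L_B)^\alpha$ to the Klein--Gordon operator in $(x,t)$ then reproduces exactly the right-hand side of \eqref{nonono}.

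The main obstacle is purely arithmetic: one has to track carefully how the two $\Gamma$ factors from \eqref{pri} interact with the squared gamma in the denominator of $H$, and verify that the shift $k \mapsto k' + 1$ exactly reproduces $H(w)$. The non-homogeneous source is not an accident; it is the boundary defect of this shift at $k = 0$, where the gamma $\Gamma(\frac{1-\alpha}{2})$ in the denominator no longer matches any term of $H$. Structurally this parallels the proof of Theorem~\ref{otrebor}, where the analogous shift produced the extra ``$+1$'' appearing in \eqref{l1}.
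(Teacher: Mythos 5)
Your proposal is correct and follows essentially the same route as the paper: pass to $w=\sqrt{c^2t^2-x^2}$, apply the monomial formula \eqref{pri} termwise (which the paper summarizes as ``steps similar to those of Theorem \ref{gallico}''), and obtain $(L_B)^\alpha H(w)=\frac{\lambda^2}{c^{2\alpha}}H(w)+\frac{\lambda 2^\alpha}{c^\alpha}\frac{w^{-\alpha-1}}{[\Gamma(\frac{1-\alpha}{2})]^2}$ before reverting to $(x,t)$. Your explicit index shift and identification of the $k=0$ term as the source of the non-homogeneity simply spell out the details the paper leaves implicit.
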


        \begin{proof}
            We start by writing the function $H$ in terms of the variable $w = \sqrt{c^2t^2-x^2}$.
            \begin{align*}
                H(w) = \sum_{k=0}^\infty \left( \frac{\lambda}{2^\alpha c^\alpha} \right)^{2k+1}
                \frac{w^{2 \alpha k +\alpha -1}}{[\Gamma(\alpha k + \frac{1+\alpha}{2})]^2}.
            \end{align*}
            By following some steps similar to those of Theorem \ref{gallico} we have that
            \begin{align*}
                (L_B)^\alpha H(w) = \frac{\lambda^2}{c^{2\alpha}} H(w) + \frac{\lambda 2^\alpha}{c^\alpha}
                \frac{w^{-\alpha-1}}{[\Gamma(\frac{1-\alpha}{2})]^2}.
            \end{align*}
            Returning now to the original variables we obtain the claimed result.
        \end{proof}

        We can finally conclude with the following
        \begin{te}
            The function
            \begin{align}
                f(x,t)&= E_{\alpha,1}(\lambda t^{\alpha})\frac{P\{\mathcal{T}^{\alpha}(t)\in dx\}}{dx},
                \qquad x\in (-ct,+ct),\\
                \nonumber & = \bigg[ct \sum_{k=1}^{\infty}
                \left(\frac{\lambda}{2^\alpha c^{\alpha}}\right)^{2k}
                \frac{(c^2t^2-x^2)^{\alpha k -1}}{\Gamma(\alpha k)\Gamma(\alpha k +1)}
                \nonumber + \sum_{k=0}^{\infty}\left(\frac{\lambda}{2^\alpha c^{\alpha}}\right)^{2k+1}
                \frac{(c^2t^2-x^2)^{\alpha k + \frac{\alpha -1}{2}}}{[\Gamma(\alpha k
                + \frac{1+\alpha}{2})]^2}\bigg],
            \end{align}
            where $P\{\mathcal{T}^{\alpha}(t)\in dx\}/dx$ represents the
            absolutely continuous component of the distribution of the
            fractional telegraph process $\mathcal{T}^{\alpha}(t)$, $t\geq 0$,
            is a solution to the non-homogeneous fractional Klein--Gordon equation \eqref{nonono}.
        \end{te}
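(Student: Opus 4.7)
The plan is to recognize $f(x,t)$ as the sum of the two series we have already analyzed separately, and then exploit the linearity of the fractional Klein--Gordon operator. Writing
\begin{align*}
    f(x,t) = F(x,t) + H(x,t),
\end{align*}
the first summand is exactly the function studied in Theorem \ref{otrebor}, while the second is the function $H(x,t)$ introduced in Lemma \ref{ammel}.

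Next, I would invoke linearity of $\bigl(\partial_t^2 - c^2 \partial_x^2\bigr)^\alpha$. This is inherited from the representation \eqref{k3} (via the change of variable $w=\sqrt{c^2t^2-x^2}$), since $(L_B)^\alpha$ is built by composing the Erd\'elyi--Kober operators $I_2^{0,-\alpha}$, each of which is linear as an operator on $F_{p,\mu}$ by \eqref{mc1-2}--\eqref{mc2}. Applying the fractional Klein--Gordon operator termwise to $f$ and using the two available identities
\begin{align*}
    \left(\frac{\partial^2}{\partial t^2}-c^2\frac{\partial^2}{\partial x^2}\right)^{\alpha} F(x,t) &= \lambda^2 F(x,t), \\
    \left(\frac{\partial^2}{\partial t^2}-c^2\frac{\partial^2}{\partial x^2}\right)^{\alpha} H(x,t) &= \lambda^2 H(x,t) + \lambda\, 2^\alpha c^\alpha \frac{(\sqrt{c^2t^2-x^2})^{-\alpha-1}}{[\Gamma(\frac{1-\alpha}{2})]^2},
\end{align*}
and summing them yields precisely \eqref{nonono} with $u_\alpha = f$, because the non-homogeneous source only arises from $H$ while the eigenvalue part $\lambda^2$ assembles $\lambda^2 F + \lambda^2 H = \lambda^2 f$.

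Since the heavy analytic work (reduction to the Bessel variable $w$, use of Lemma \ref{brunello}, and the explicit computation in \eqref{pri}) has already been performed inside Theorem \ref{otrebor} and Lemma \ref{ammel}, the present statement amounts to little more than a bookkeeping step. The only point requiring a brief comment is the restriction to the open cone $|x|<ct$, where each series converges absolutely and can be differentiated termwise, and the legitimacy of applying $(L_B)^\alpha$ series-wise—already established earlier by writing the operator through the Erd\'elyi--Kober integrals and exchanging sum and integral via dominated convergence on any compact subset of $\{|x|<ct\}$. I would not anticipate any genuine obstacle beyond this routine interchange.
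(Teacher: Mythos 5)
Your proposal is correct and follows exactly the paper's route: the paper proves this theorem by declaring it ``a direct consequence of Theorem \ref{otrebor} and Lemma \ref{ammel},'' i.e.\ precisely the decomposition $f=F+H$ plus linearity that you spell out. Your version merely makes the bookkeeping (and the termwise-application caveat) explicit.
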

        \begin{proof}
            The proof is a direct consequence of Theorem \ref{otrebor} and Lemma \ref{ammel}.
        \end{proof}

    \section{Fractional planar random motion at finite velocity}

        A planar random motion at finite velocity with uniformly distributed orientation of displacements has been studied
        by several researchers over the years \citep[see for example][]{sta,kol}.
        The motion is described by a particle taking directions $\theta_j$, $j=1, 2, \dots$, (uniformly
        distributed in $(0,2\pi]$) at Poisson paced times. The orientations $\theta_j$ and the governing Poisson process
        $\mathcal{N}(t)$, $t\geq 0$, are assumed to be independent. The conditional distributions of the current position
        $(X(t),Y(t))$, $t \ge 0$, are given by (see formula (11) of \citet{kol})
        \begin{equation}
            \label{pt1}
            P\{X(t)\in dx, Y(t)\in dy|\mathcal{N}(t)=n \}
            =\frac{n \left(c^2t^2-x^2-y^2\right)^{\frac{n}{2}-1}}{2\pi(ct)^n} dx \, dy,
        \end{equation}
        for $x^2+y^2<c^2t^2$, $n\geq 1$, and possesses characteristic function
        \begin{equation}
            \mathbb{E}\{e^{i\alpha X(t)+i\beta Y(t)}|\mathcal{N}(t)=n\}
            =\frac{2^{n/2}\Gamma(\frac{n}{2}+1)}{\left(ct\sqrt{\alpha^2+\beta^2}\right)^{n/2}}
            J_{\frac{n}{2}}\left(ct\sqrt{\alpha^2+\beta^2}\right), \qquad n\geq 1, \: (\alpha,\beta) \in \mathbb{R}^2.
        \end{equation}
        The unconditional distribution of $(X(t),Y(t))$ reads
        \begin{equation}
            \label{pt}
            P\{X(t)\in dx, Y(t)\in dy\}= \frac{\lambda}{2\pi c}
            \frac{e^{-\lambda t+\frac{\lambda}{c}\sqrt{c^2t^2-x^2-y^2}}}{\sqrt{c^2t^2-x^2-y^2}} dx \, dy,
        \end{equation}
        for $x^2+y^2<c^2t^2$.
        The singular component of $(X(t),Y(t))$ is uniformly distributed on the circumference of radius $ct$
        and has weight $e^{-\lambda t}$.
        It has been proven that the density in \eqref{pt} is a solution to the planar telegraph equation
        (also equation of damped waves)
        \begin{equation}
            \frac{\partial^2 u}{\partial t^2}+2\lambda\frac{\partial u}{\partial t}
            = c^2 \left\{\frac{\partial^2}{\partial x^2}+\frac{\partial^2}{\partial y^2}\right\}u.
        \end{equation}
        In this section we construct a fractional planar random motion at finite velocity whose space-dependent
        component of
        the distribution solves the two-dimensional fractional Klein--Gordon equation
        \begin{equation}
            \label{2d}
            \left(\frac{\partial^2}{\partial t^2}-c^2 \left\{\frac{\partial^2}{\partial x^2}+
            \frac{\partial^2}{\partial y^2})\right\}\right)^{\alpha}u_{\alpha}(x,y,t)
            =\lambda^2 u_{\alpha}(x,y,t),
            \quad \alpha \in (0,1].
        \end{equation}
        By using the transformation
        \begin{align*}
            w = \sqrt{c^2t^2-x^2-y^2},
        \end{align*}
        we convert \eqref{2d} into the Bessel-type fractional equation
        \begin{equation}
            \label{L2d}
            \left(\frac{d^2}{dw^2}+\frac{2}{w}\frac{d}{dw}\right)^{\alpha}u_{\alpha}(w)
            =\frac{\lambda^2}{c^{2\alpha}}u_{\alpha}(w).
        \end{equation}
        Since
        \begin{equation}
            \left(\frac{d^2}{dw^2}+\frac{2}{w}\frac{d}{dw}\right)^{\alpha}
            =\left(\frac{1}{w^2}\frac{d}{dw}w^2\frac{d}{dw}\right)^{\alpha},
        \end{equation}
        this operator coincides with \eqref{L} for $n=2$, $a_1=-2$, $a_2=2$,
        $a_3=0$. Therefore, in view of \eqref{Lo}, $a=0$, $m=2$, $b_1=1/2$ and $b_2=0$, and, in view of
        \eqref{pot}, we can write
        \begin{equation}
            \left(\frac{d^2}{dw^2}+\frac{2}{w}\frac{d}{dw}\right)^{\alpha}u_{\alpha}(w)=4^{\alpha}w^{-2\alpha}
            I_2^{0,-\alpha}I_2^{\frac{1}{2}, -\alpha}u_{\alpha}(w).
        \end{equation}
        We are now ready to state the following
        \begin{te}
            \label{berger}
            A solution to \eqref{L2d} is given by
            \begin{align}
                u_{\alpha}(w)&= \sum_{k=0}^{\infty}\left(\frac{\lambda}{c^{\alpha}}\right)^{2k+2}
                \frac{w^{2\alpha k+2\alpha-2}}{\Gamma(2\alpha k+2\alpha)}
                =\left(\frac{\lambda}{c^{\alpha}}\right)^2 w^{2\alpha-2}
                E_{2\alpha, 2\alpha}\left(\left(\frac{\lambda w^{\alpha}}{c^{\alpha}}\right)^2\right),
                \qquad w \in \mathbb{R}, \: \alpha \in (0,1].
            \end{align}
        \end{te}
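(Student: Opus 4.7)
The plan is to mimic the strategy used in the proof of Theorem \ref{gallico}: first compute the action of the fractional operator on a single power $w^\beta$, then apply the result termwise to the series defining $u_\alpha(w)$ and reorganize via a reindexing.

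Concretely, I would start from the McBride representation
\begin{align*}
    \Bigl(\tfrac{d^2}{dw^2}+\tfrac{2}{w}\tfrac{d}{dw}\Bigr)^\alpha f(w)
    = 4^{\alpha} w^{-2\alpha} I_2^{0,-\alpha}\, I_2^{1/2,-\alpha} f(w),
\end{align*}
which is already established above, and evaluate it on $f(w)=w^\beta$. Since $-\alpha\le 0$, formula \eqref{mc2} applies to each of the two Erd\'elyi--Kober operators: for $\eta\in\{0,1/2\}$,
\begin{align*}
    I_2^{\eta,-\alpha} w^\beta
    = \Bigl(\eta+1-\alpha+\tfrac{\beta}{2}\Bigr) I_2^{\eta,1-\alpha} w^\beta
        +\tfrac{1}{2} I_2^{\eta,1-\alpha}\Bigl(w\tfrac{d}{dw}w^\beta\Bigr)
    = \Bigl(\eta+1-\alpha+\tfrac{\beta}{2}\Bigr) I_2^{\eta,1-\alpha} w^\beta,
\end{align*}
and Lemma \ref{brunello} then gives a closed gamma ratio. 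Composing the two applications (first with $\eta=1/2$, then with $\eta=0$) and cancelling the linear prefactors against one $\Gamma$-shift each, one gets
\begin{align*}
    \Bigl(\tfrac{d^2}{dw^2}+\tfrac{2}{w}\tfrac{d}{dw}\Bigr)^\alpha w^\beta
    = 4^\alpha\,
    \frac{\Gamma\!\left(\tfrac{\beta}{2}+1\right)\Gamma\!\left(\tfrac{\beta}{2}+\tfrac{3}{2}\right)}
         {\Gamma\!\left(\tfrac{\beta}{2}+1-\alpha\right)\Gamma\!\left(\tfrac{\beta}{2}+\tfrac{3}{2}-\alpha\right)}\, w^{\beta-2\alpha}.
\end{align*}
By the Legendre duplication formula $\Gamma(z)\Gamma(z+\tfrac12)=2^{1-2z}\sqrt{\pi}\,\Gamma(2z)$ this collapses to
\begin{align*}
    \Bigl(\tfrac{d^2}{dw^2}+\tfrac{2}{w}\tfrac{d}{dw}\Bigr)^\alpha w^\beta
    = \frac{\Gamma(\beta+2)}{\Gamma(\beta+2-2\alpha)}\, w^{\beta-2\alpha}.
\end{align*}

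With this rule in hand, the rest is a short bookkeeping step. Setting $\beta=2\alpha k+2\alpha-2$ for the $k$-th term of the proposed solution,
\begin{align*}
    \Bigl(\tfrac{d^2}{dw^2}+\tfrac{2}{w}\tfrac{d}{dw}\Bigr)^\alpha
    \frac{w^{2\alpha k+2\alpha-2}}{\Gamma(2\alpha k+2\alpha)}
    = \frac{w^{2\alpha k -2}}{\Gamma(2\alpha k)}.
\end{align*}
Multiplying by $(\lambda/c^\alpha)^{2k+2}$ and summing, the $k=0$ term vanishes because $1/\Gamma(0)=0$, so a shift $k\mapsto k+1$ reproduces exactly $(\lambda^2/c^{2\alpha})\,u_\alpha(w)$, which is the claim. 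The compact Mittag--Leffler representation follows by recognizing the series with the factor $w^{2\alpha-2}$ pulled out.

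The only nontrivial step I anticipate is the collapse of the four-gamma ratio coming from the composition of the two different Erd\'elyi--Kober operators (with $b_1=1/2\neq b_2=0$) into the two-gamma ratio $\Gamma(\beta+2)/\Gamma(\beta+2-2\alpha)$; this is the new feature compared with Theorem \ref{gallico}, where $b_1=b_2=0$ produced a perfect square. Handling it cleanly requires the duplication formula, and one must keep track of the $4^\alpha=2^{2\alpha}$ prefactor in front of $w^{-2\alpha}$, which exactly cancels the $2^{-2\alpha}$ generated by duplication.
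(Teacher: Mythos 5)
Your proposal is correct and follows essentially the same route as the paper's own proof: the McBride representation $4^{\alpha}w^{-2\alpha}I_2^{0,-\alpha}I_2^{1/2,-\alpha}$, reduction of each Erd\'elyi--Kober factor via \eqref{mc2} and Lemma \ref{brunello}, collapse of the four-gamma ratio by the duplication formula to $\Gamma(\beta+2)/\Gamma(\beta+2-2\alpha)$, and termwise application with $\beta=2\alpha k+2\alpha-2$ followed by the index shift (with the $k=0$ term killed by $1/\Gamma(0)=0$, which you make explicit and the paper leaves implicit). Only a cosmetic slip: in your intermediate display the prefactor of the first term should be $\eta+1-\alpha$ (the $\beta/2$ arrives from the $\tfrac{1}{2}I_2^{\eta,1-\alpha}(w\,d/dw)$ term), but your final coefficient $\eta+1-\alpha+\beta/2$ and everything downstream are right.
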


        \begin{proof}
            We first observe that
            \begin{align}
                \label{cont}
                \left(\frac{d^2}{dw^2}+\frac{2}{w}\frac{d}{dw}\right)^{\alpha}w^{\beta}&=4^{\alpha}w^{-2\alpha}
                I_2^{0,-\alpha}I_2^{\frac{1}{2}, -\alpha}w^{\beta}=4^{\alpha}w^{-2\alpha}
                I_2^{0,-\alpha}\left(\frac{3}{2}-\alpha+\frac{\beta}{2}\right)\frac{\Gamma(\frac{1}{2}+1+\frac{\beta}{2})}
                {\Gamma(1-\alpha+\frac{1}{2}+\frac{\beta}{2}+1)}w^{\beta}\\
                \nonumber &=4^{\alpha}w^{\beta-2\alpha}
                \left(\frac{3}{2}-\alpha+\frac{\beta}{2}\right)\left(1-\alpha
                +\frac{\beta}{2}\right)\frac{\Gamma(1+\frac{\beta}{2})
                \Gamma(\frac{1}{2}+1+\frac{\beta}{2})}
                {\Gamma(1-\alpha+\frac{1}{2}+\frac{\beta}{2}+1)\Gamma(1-\alpha+\frac{\beta}{2}+1)}\\
                \nonumber &=4^{\alpha}w^{\beta-2\alpha}
                \frac{\Gamma(1+\frac{\beta}{2})
                \Gamma(\frac{1}{2}+1+\frac{\beta}{2})}
                {\Gamma(1-\alpha+\frac{1}{2}+\frac{\beta}{2})\Gamma(1-\alpha+\frac{\beta}{2})}
                =\frac{\Gamma(\beta+2)}{\Gamma(\beta+2-2\alpha)}w^{\beta-2\alpha},
            \end{align}
            where in view of \eqref{mc2} and in force of Lemma \ref{brunello} we used the fact that
            \begin{align}
                \nonumber
                I^{\frac{1}{2}, -\alpha}_2w^{\beta}
                =\left(\frac{3}{2}-\alpha+\frac{\beta}{2}\right)I_2^{\frac{1}{2}, 1-\alpha}w^{\beta}
                = \left(\frac{3}{2}-\alpha+\frac{\beta}{2}\right)\frac{\Gamma(\frac{1}{2}+\frac{\beta}{2}+1)}
                {\Gamma(1-\alpha+\frac{\beta}{2}+\frac{1}{2}+1)}w^{\beta}.
            \end{align}
            In the last step of \eqref{cont} we repeatedly applied the duplication formula of the Gamma function.
            Hence we have that
            \begin{align}
                \nonumber
                &\left(\frac{d^2}{dw^2}+\frac{2}{w}\frac{d}{dw}\right)^{\alpha}\sum_{k=0}^{\infty}
                \left(\frac{\lambda}{c^{\alpha}}\right)^{2k+2}
                \frac{w^{2\alpha k+2\alpha-2}}{\Gamma(2\alpha k+2\alpha)} \\
                & =\sum_{k=0}^{\infty}
                \left(\frac{\lambda}{c^{\alpha}}\right)^{2k+2}
                \frac{w^{2\alpha k-2}}{\Gamma(2\alpha k)}
                \nonumber =\left(\frac{\lambda}{c^{\alpha}}\right)^2\sum_{k'=0}^{\infty}
                \left(\frac{\lambda}{c^{\alpha}}\right)^{2k'+2}
                \frac{w^{2\alpha k'+2\alpha-2}}{\Gamma(2\alpha k'+2\alpha)},
            \end{align}
            as claimed.
        \end{proof}

        \begin{os}
            In light of Theorem \ref{berger} we can state that the two-dimensional fractional Klein--Gordon
            equation \eqref{2d}, admits the following solution:
            \begin{equation}
                \label{plat}
                u_{\alpha}(x,y,t)=\left(\frac{\lambda}{c^{\alpha}}\right)^2\frac{E_{2\alpha,2\alpha}
                \left(\frac{\lambda^2}{c^{2\alpha}}(\sqrt{c^2t^2-x^2-y^2})^{2\alpha}\right)}
                {\left(\sqrt{c^2t^2-x^2-y^2}\right)^{2-2\alpha}}, \qquad x^2+y^2<c^2t^2.
            \end{equation}
            In the specific case $\alpha = 1$, formula \eqref{plat} takes the form
            \begin{equation}
                u_1(x,y,t)=\sum_{k=0}^{\infty}
                \left(\frac{\lambda}{c}\right)^{2k+2}
                \frac{(\sqrt{c^2t^2-x^2-y^2})^{2k}}{(2k+1)!}.
            \end{equation}
        \end{os}

        The solution \eqref{plat} of \eqref{2d} can also be written as
        \begin{equation}
            \nonumber
            u_{\alpha}(x,y,t)=\frac{2\pi c^{2\alpha}}{\lambda^2}E_{\alpha, 1}(\lambda t^{\alpha})\sum_{k=0}^{\infty}
            P\{X^{\alpha}(t)\in dx, Y^{\alpha}(t)\in dy| \mathcal{N}^{\alpha}(t)=2k+2\}P\{\mathcal{N}^{\alpha}(t)=2k+2\},
        \end{equation}
        where
        \begin{align}
            \nonumber & P\{X^{\alpha}(t)\in dx, Y^{\alpha}\in dy | \mathcal{N}^{\alpha}(t)=2k+2\}
            = dx \, dy \frac{2k\alpha+2\alpha}{2\pi(ct)^{2k\alpha+2\alpha}}
            \left(\sqrt{c^2t^2-x^2-y^2}\right)^{2\alpha k +2\alpha-2},
        \end{align}
        for $k\geq 0,\, (x,y)\in C_{ct}$, and where
        \begin{align*}
            C_{ct}=\{(x,y)\in \mathbb{R}^2: x^2+y^2\leq c^2t^2\},
        \end{align*}
        generalizes \eqref{pt1} for even values of $n$.

        If $\mathcal{N}^{\alpha}(t)=2k+1$, we assume that
        \begin{align*}
            & P\{X^{\alpha}(t)\in dx, Y^{\alpha}(t)\in dy| \mathcal{N}^{\alpha}(t)=2k+1\} \\
            & = dy \, dx \, (2k\alpha+\alpha)\left(\sqrt{c^2t^2-(x^2+y^2)}\right)^{2\alpha k +\alpha-2}
            \Big/
            \left( 2\pi(ct)^{2k\alpha+\alpha} \right),
            \qquad k\geq 0,\, (x,y)\in C_{ct}.
        \end{align*}

        With all these preliminaries we arrive at the following theorem:

        \begin{te}
            The vector process $(X^{\alpha}(t), Y^{\alpha}(t))$, $t\geq 0$, $0<\alpha \leq 1$, possesses the following
            probability distribution inside $C_{ct}$
            \begin{align}
                \label{displ}
                P\{X^{\alpha}(t)\in dx, Y^{\alpha}(t)\in dy\}&=\frac{1}{2\pi
                E_{\alpha,1}(\lambda t^{\alpha})}\sum_{k=1}^{\infty}
                \left(\frac{\lambda}{c^{\alpha}}\right)^{k}
                \frac{(c^2t^2-x^2-y^2)^{\alpha \frac{k}{2}-1}}{\Gamma(\alpha
                k)}\\
                \nonumber &=\frac{\lambda}{2\pi c^{\alpha}E_{\alpha,1}(\lambda t^{\alpha})}
                \frac{E_{\alpha,\alpha}\left(
                \frac{\lambda}{c^{\alpha}}\left(\sqrt{c^2t^2-x^2-y^2}\right)^{\alpha}\right)}
                {\left(\sqrt{c^2t^2-x^2-y^2}\right)^{2-\alpha}}.
            \end{align}
        \end{te}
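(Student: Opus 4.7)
The plan is to apply the law of total probability with respect to the fractional Poisson process $\mathcal{N}^{\alpha}(t)$, unify the two conditional laws (even and odd number of events) into a single $n$-indexed expression, and then recognize the resulting series as a two-parameter Mittag--Leffler function.

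First, I would observe that the two conditional densities displayed immediately before the theorem, one for $\mathcal{N}^{\alpha}(t)=2k+2$ and the one assumed for $\mathcal{N}^{\alpha}(t)=2k+1$, can be written uniformly as
\begin{equation*}
P\{X^{\alpha}(t)\in dx,\, Y^{\alpha}(t)\in dy \mid \mathcal{N}^{\alpha}(t)=n\} = \frac{n\alpha}{2\pi(ct)^{n\alpha}}\bigl(\sqrt{c^{2}t^{2}-x^{2}-y^{2}}\bigr)^{n\alpha-2}\,dx\,dy, \qquad n\ge 1,
\end{equation*}
simply by substituting $n=2k+2$ or $n=2k+1$ in the pre-theorem formulas (the powers $2\alpha k+2\alpha-2$ and $2\alpha k+\alpha-2$, together with the normalizing constants, both collapse to this common shape). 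This unification is the only conceptual step and is what allows a clean single-series expression.

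Next, by the law of total probability, restricting attention to the interior of $C_{ct}$ (the contribution $n=0$ is singularly supported on the circle $x^{2}+y^{2}=c^{2}t^{2}$ and does not enter the absolutely continuous density claimed in the theorem), I would write
\begin{equation*}
P\{X^{\alpha}(t)\in dx,\, Y^{\alpha}(t)\in dy\} = \frac{dx\,dy}{2\pi E_{\alpha,1}(\lambda t^{\alpha})}\sum_{n=1}^{\infty}\frac{n\alpha\,(\lambda t^{\alpha})^{n}}{(ct)^{n\alpha}\,\Gamma(\alpha n+1)}\bigl(\sqrt{c^{2}t^{2}-x^{2}-y^{2}}\bigr)^{n\alpha-2}.
\end{equation*}
Using $\Gamma(\alpha n+1)=\alpha n\,\Gamma(\alpha n)$ (which cancels the factor $n\alpha$) and $t^{n\alpha}/(ct)^{n\alpha}=c^{-n\alpha}$ reduces the right-hand side directly to the first form displayed in \eqref{displ}.

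For the second, more compact form, I would factor $(\lambda/c^{\alpha})(c^{2}t^{2}-x^{2}-y^{2})^{\alpha/2-1}$ out of the series and reindex by $j=n-1$; the remaining sum is then, by inspection of the defining series, exactly $E_{\alpha,\alpha}\bigl((\lambda/c^{\alpha})(\sqrt{c^{2}t^{2}-x^{2}-y^{2}})^{\alpha}\bigr)$. I expect no substantive obstacle: the argument is essentially a reorganization of sums plus an index shift, with the only non-mechanical point being the recognition that the even and odd conditional densities merge into a single closed formula valid for every $n\ge 1$.
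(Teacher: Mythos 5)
Your proposal is correct and follows essentially the same route as the paper: the paper's proof likewise merges the even and odd conditional densities into the single formula $P\{X^{\alpha}(t)\in dx,\,Y^{\alpha}(t)\in dy\mid \mathcal{N}^{\alpha}(t)=n\}=\frac{\alpha n}{2\pi(ct)^{\alpha n}}\left(c^{2}t^{2}-x^{2}-y^{2}\right)^{\frac{n\alpha}{2}-1}dx\,dy$ and then sums against the fractional Poisson weights. You merely make explicit the cancellation $\Gamma(\alpha n+1)=\alpha n\,\Gamma(\alpha n)$ and the index shift yielding $E_{\alpha,\alpha}$, which the paper leaves as ``we easily arrive at'' the stated result.
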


        \begin{proof}
        From the previous calculations, we see that
        \begin{equation}\label{conpl}
        P\{X^{\alpha}(t)\in dx, Y^{\alpha}(t)\in dy| \mathcal{N}^{\alpha}(t)=n\}
            = \frac{\alpha n}{2\pi(ct)^{\alpha n}}\left(c^2t^2-x^2-y^2
                \right)^{\frac{n\alpha}{2}-1}, \quad (x,y)\in C_{ct},
        \end{equation}
        and thus we easily arrive at \eqref{displ}.

        \end{proof}

        Note that \eqref{displ} coincides with \eqref{pt} for $\alpha =1$.

        \begin{os}
            We observe that
            \begin{align}
                \nonumber &\iint_{C_{ct}}P\{X^{\alpha}(t)\in dx, Y^{\alpha}(t)\in dy\}=\frac{1}{2\pi
                E_{\alpha,1}(\lambda t^{\alpha})}
                \sum_{k=1}^{\infty}\left(
                \frac{\lambda}{c^{\alpha}}\right)^k\int_0^{2\pi}\int_0^{ct}\frac{(c^2t^2-\rho^2)^{\frac{\alpha k}{2}-1}
                \rho \, d\rho \, d\theta}{\Gamma(\alpha k)}\\
                \nonumber &= \frac{1}{E_{\alpha,1}(\lambda t^{\alpha})}
                \sum_{k=1}^{\infty}\left(\frac{\lambda}{c^{\alpha}}\right)^k\frac{(ct)^{\alpha k}}{2}
                \int_0^{1}\frac{(1-y)^{\frac{\alpha k}{2}-1}
                dy}{\Gamma(\alpha k)}
                =\frac{1}{E_{\alpha,1}(\lambda t^{\alpha})}
                \sum_{k=1}^{\infty}\lambda^k t^{\alpha k}\frac{\bigg[(1-y)^{\frac{\alpha k}{2}}\bigg]^0_1}{\alpha k
                \Gamma(\alpha k)}\\
                \nonumber &= 1-\frac{1}{E_{\alpha,1}(\lambda t^{\alpha})}= 1-P\{\mathcal{N}^{\alpha}(t)=0\},
            \end{align}
            This can be interpreted by observing that, if no Poisson event occurs,
            the particle performing the fractional planar motion
            arrives at $\partial C_{ct}$ with probability
            \begin{align*}
                P\{\mathcal{N}^{\alpha}(t)=0\}=1/E_{\alpha, 1}(\lambda t^{\alpha}),
            \end{align*}
            uniformly distributed
            on the circumference $\partial C_{ct}$ because of the initial uniformly distributed orientations of motion.
        \end{os}

        \begin{os}
            \label{binoz}
            A generalization of the planar random motion treated in \citet{kol} and
            related to the fractional version treated above
            can be described as follows.
            A homogeneous Poisson process governs the changes of
            direction occurring at times $t_j$, with $0<t_1<\dots t_j<\dots<t_n< t$, of a
            particle moving with velocity $c$. At time $t_j$ the
            particle takes the orientation $\theta_j$ uniformly
            distributed in $[0, 2\pi]$. The position $(\mathcal{X}(t), \mathcal{Y}(t))$
            of the randomly moving particle, after $B(n,\alpha)$
            changes of direction, reads
            \begin{equation}
                \begin{cases}
                    \mathcal{X}(t)=\displaystyle\sum\limits_{j=0}^{B(n,\alpha)}c\left(t_j-t_{j-1}\right)\cos\theta_j,\\ \\
                    \mathcal{Y}(t)=\displaystyle\sum\limits_{j=0}^{B(n,\alpha)}c\left(t_j-t_{j-1}\right)\sin\theta_j,
                \end{cases}
            \end{equation}
            where $B(n,\alpha)$, $0<\alpha\leq 1$ is a binomial r.v.
            independent from $\theta_j$ and $t_j$, $0\leq j\leq
            B(n,\alpha)$. The conditional distribution of $(\mathcal{X}(t), \mathcal{Y}(t))$
            reads
            \begin{equation}
                \label{bino}
                P\{\mathcal{X}(t)\in dx, \mathcal{Y}(t)\in dy|\mathcal{N}(t)=B(n,\alpha)\}
                =\frac{B(n,\alpha)}{2\pi(ct)^{B(n,\alpha)}}\left(c^2t^2-x^2-y^2\right)^{\frac{B(n,\alpha)}{2}-1},
            \end{equation}
            where $\mathcal{N}(t)$, $t\geq 0$ denotes the number of
            events in $[0,t]$ of a homogeneous Poisson process.
            This corresponds to randomize formula (11) of
            \citep{kol} with $B(n,\alpha)$. In this case the
            parameter $\alpha$ is the order of the operator
            appearing in \eqref{2d}.\\
            The mean value of \eqref{bino} becomes
            \begin{align}
                \label{bin1}
                \mathbb{E}P\{\mathcal{X}(t)\in dx, \mathcal{Y}(t)\in dy|\mathcal{N}(t)=B(n,\alpha)\}&= \sum_{k=0}^n
                \frac{k\left(c^2t^2-x^2-y^2\right)^{\frac{k}{2}-1}}{2\pi(ct)^k}
                \binom{n}{k}\alpha^k(1-\alpha)^{n-k}\\
                \nonumber &= \sum_{k=0}^n \frac{k\left[\sqrt{1-\frac{x^2+y^2}{c^2t^2}}\right]^k}{2\pi}
                \binom{n}{k}\frac{\alpha^k(1-\alpha)^{n-k}}{c^2t^2-x^2-y^2}\\
                \nonumber &= \sum_{k=1}^n\frac{n!}{2\pi(n-k)!(k-1)!}\left(\alpha
                \sqrt{1-\frac{x^2+y^2}{c^2t^2}}\right)^k \frac{(1-\alpha)^{n-k}}
                {c^2t^2-x^2-y^2}\\
                \nonumber &= \frac{n}{2\pi} \sum_{k=0}^{n-1}\binom{n-1}{k}\left(\alpha
                \sqrt{1-\frac{x^2+y^2}{c^2t^2}}\right)^{k+1}
                \frac{(1-\alpha)^{n-1-k}}{c^2t^2-x^2-y^2}\\
                \nonumber &=\frac{n\alpha}{2\pi\sqrt{c^2t^2-x^2-y^2}}\frac{1}{(ct)^n}
                \left[ct+\alpha\left(\sqrt{c^2t^2-x^2-y^2}-ct\right)\right]^{n-1},
            \end{align}
            for $n\geq 1$.
            Note that, for $\alpha =1$ we retrieve distribution \eqref{pt1}.

            The unconditional distribution related to \eqref{bin1}, obtained
            by randomizing $n$ with a fractional Poisson process
            $\mathcal{N}^{\alpha}(t)$,
             becomes
            \begin{align}
                P\{\mathcal{X}(t)\in dx, \mathcal{Y}(t)\in dy\}&=\sum_{n=0}^{\infty}
                \mathbb{E}P\{\mathcal{X}(t)\in dx, \mathcal{Y}(t)
                \in dy|\mathcal{N}(t)=B(n,\alpha)\}P\{\mathcal{N}^{\alpha}(t)=n\}\\
                \nonumber &=\frac{\lambda}{2\pi c}\frac{E_{\alpha, \alpha}
                \left(\frac{\lambda}{c}(ct+\alpha\{\sqrt{c^2t^2-x^2-y^2}-ct\})\right)}
                {\sqrt{c^2t^2-x^2-y^2}E_{\alpha,1}(\lambda t)}.
            \end{align}
            In the case where the process governing the number $n$ in the binomial r.v.\ of
            changes of direction is an homogeneous Poisson process, we
            have instead
            \begin{align}
                P\{X(t)\in dx, Y(t)\in dy\}=\frac{\lambda\alpha}{2\pi c}\frac{e^{-\frac{\lambda\alpha}{c}
                \left[ct-\sqrt{c^2t^2-x^2-y^2}
                \right]}}{\sqrt{c^2t^2-x^2-y^2}}.
            \end{align}

            Note that the following interesting inequality holds:
            \begin{align*}
                \mathbb{E} P \{ \mathcal{X}(t) \in dx, \mathcal{Y}(t) \in dy
                |\mathcal{N}(t)= B(n,\alpha) \} \ge \alpha^n P \{ X(t) \in dx, Y(t) \in dy
                | \mathcal{N}(t)=n \}.
            \end{align*}

            We finally observe that the conditional distribution \eqref{conpl} of
            the fractional planar random motion
            \begin{equation}
                P\{X^{\alpha}(t)\in dx, Y^{\alpha}(t)\in dy|\mathcal{N}^{\alpha}(t)=n\}
                =\frac{\alpha n}{2\pi(ct)^{\alpha n}}\left(c^2t^2-x^2-y^2
                \right)^{\frac{n\alpha}{2}-1}, \quad (x,y)\in C_{ct},
            \end{equation}
            can be obtained as
            \begin{equation}
                P\{\mathcal{X}(t)\in dx, \mathcal{Y}(t)\in dy|\mathcal{N}(t)=\mathbb{E}(B(n,\alpha))=n\alpha\}.
            \end{equation}
            Thus the fractionality implies that we take a fraction
            $\alpha$ of the number of changes of direction of the
            classical planar random motion.
        \end{os}

        \begin{lem}
            \label{sealand}
            The function
            \begin{align*}
                u_\alpha(w)=\sum_{k=0}^{\infty}
                \left(\frac{\lambda}{c^{\alpha}}\right)^{2k+1}
                \frac{w^{2k\alpha+\alpha-2}}{\Gamma(2k\alpha+\alpha)}=\frac{\lambda w^{\alpha-2}}{c^{\alpha}}
                E_{2\alpha,\alpha}\left(\frac{\lambda^2 w^{2\alpha}}{c^{2\alpha}}\right),
            \end{align*}
           is an analytic solution of the equation
           \begin{equation}
                \left(\frac{d^2}{dw^2}+\frac{2}{w}\frac{d}{dw}\right)^{\alpha}u_\alpha(w)
                =\frac{\lambda^2}{c^{2\alpha}}u_\alpha(w)+
                \frac{\lambda}{c^{\alpha}}\frac{w^{-2-\alpha}}{\Gamma(-\alpha)}.
            \end{equation}
        \end{lem}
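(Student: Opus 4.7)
The plan is to mimic the computation already carried out in Theorem \ref{berger}: apply the operator $(L_B^{(2)})^{\alpha} := (d^2/dw^2 + (2/w)d/dw)^{\alpha}$ term-by-term to the series defining $u_\alpha$, using the action on monomials that was established in \eqref{cont}, namely
\begin{align*}
\left(\frac{d^2}{dw^2}+\frac{2}{w}\frac{d}{dw}\right)^{\alpha} w^{\beta} = \frac{\Gamma(\beta+2)}{\Gamma(\beta+2-2\alpha)}\, w^{\beta-2\alpha}.
\end{align*}

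First I would specialize this to $\beta = 2k\alpha + \alpha - 2$, which gives $\beta+2 = 2k\alpha+\alpha$ and $\beta+2-2\alpha = 2k\alpha-\alpha$, so that a single term of the series is transformed as
\begin{align*}
\left(\frac{\lambda}{c^{\alpha}}\right)^{2k+1}\frac{(L_B^{(2)})^{\alpha} w^{2k\alpha+\alpha-2}}{\Gamma(2k\alpha+\alpha)}
= \left(\frac{\lambda}{c^{\alpha}}\right)^{2k+1}\frac{w^{2k\alpha-\alpha-2}}{\Gamma(2k\alpha-\alpha)},
\end{align*}
where the factor $\Gamma(2k\alpha+\alpha)$ conveniently cancels.

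Next I would split the resulting sum by extracting the $k=0$ contribution, which is precisely
\begin{align*}
\frac{\lambda}{c^{\alpha}}\,\frac{w^{-\alpha-2}}{\Gamma(-\alpha)},
\end{align*}
— this is the inhomogeneous term on the right-hand side of the target equation; note that for $\alpha\in(0,1)$ the value $\Gamma(-\alpha)$ is finite and nonzero, so the term survives (whereas in Theorem \ref{berger} the analogous zero-index term vanished because the series there started at $k=1$ in the shift). The remaining sum ($k\geq 1$) is then reindexed by $k'=k-1$, producing
\begin{align*}
\sum_{k'=0}^{\infty}\left(\frac{\lambda}{c^{\alpha}}\right)^{2k'+3}\frac{w^{2k'\alpha+\alpha-2}}{\Gamma(2k'\alpha+\alpha)} = \frac{\lambda^2}{c^{2\alpha}}\, u_\alpha(w),
\end{align*}
which is the homogeneous piece. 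Combining the two contributions yields the claimed identity.

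The step I would regard as slightly delicate is the justification of term-by-term application of $(L_B^{(2)})^{\alpha}$ to the series, in particular the appearance of the $k=0$ term with $\Gamma(-\alpha)$; this is not an analytic obstruction but a bookkeeping point, since the Erd\'elyi--Kober operators defining $(L_B^{(2)})^{\alpha}$ act on $F_{p,\mu}$ and the monomial $w^{2k\alpha+\alpha-2}$ must be checked to lie in an appropriate space for each $k$. Apart from this, everything reduces to the single computation \eqref{cont} already performed in Theorem \ref{berger}, together with a reindexing.
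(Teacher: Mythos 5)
Your proposal is correct and follows essentially the same route as the paper: the paper's proof likewise applies \eqref{cont} with $\beta = 2k\alpha+\alpha-2$ term by term, separates the $k=0$ contribution $\frac{\lambda}{c^{\alpha}}\frac{w^{-\alpha-2}}{\Gamma(-\alpha)}$ as the inhomogeneous term, and reindexes the remaining sum to recover $\frac{\lambda^2}{c^{2\alpha}}u_\alpha(w)$. The only cosmetic difference is your added remark on why the zero-index term survives here (finite $\Gamma(-\alpha)$) while the analogous term in Theorem \ref{berger} dies via $1/\Gamma(0)=0$, which the paper leaves implicit.
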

        \begin{proof}
            In view of \eqref{cont}, we have that
            \begin{align*}
                \left(\frac{d^2}{dw^2}+\frac{2}{w}\frac{d}{dw}\right)^{\alpha}w^{2k\alpha+\alpha-2}
                = \frac{\Gamma(2k\alpha+\alpha)}{\Gamma(2k\alpha-\alpha)}
                w^{2k\alpha-\alpha-2}.
            \end{align*}
            Hence
            \begin{align}
                \nonumber\left(\frac{d^2}{dw^2}+\frac{2}{w}\frac{d}{dw}\right)^{\alpha}u_\alpha(w)&=\sum_{k=0}^{\infty}
                \left(\frac{\lambda}{c^{\alpha}}\right)^{2k+1}
                \frac{w^{2k\alpha-\alpha-2}}{\Gamma(2k\alpha-\alpha)}\\
                \nonumber &=\left(\frac{\lambda}{c^{\alpha}}\right)^{2}\sum_{k'=0}^{\infty}
                \left(\frac{\lambda}{c^{\alpha}}\right)^{2k'+1}
                \frac{(w)^{2k'\alpha+\alpha-2}}{\Gamma(2k'\alpha+\alpha)}
                + \frac{\lambda}{c^{\alpha}}\frac{w^{-\alpha-2}}{\Gamma(-\alpha)}\\
                \nonumber &=\left(\frac{\lambda}{c^{\alpha}}\right)^2u_\alpha(w)
                + \frac{\lambda}{c^{\alpha}}\frac{w^{-\alpha-2}}{\Gamma(-\alpha)},
            \end{align}
            as claimed.
        \end{proof}

        \begin{coro}
            The function
            \begin{align}
                \label{cor1}
                u_\alpha(x,y,t)&=\frac{2\pi c^{\alpha}}{\lambda}E_{\alpha, 1}(\lambda t^{\alpha})\sum_{k=0}^{\infty}
                P\{X^{\alpha}(t)\in dx, Y^{\alpha}(t)\in dy|
                \mathcal{N}^{\alpha}(t)= k\}P\{\mathcal{N}^{\alpha}(t)=k\}\\
                \nonumber &=  \sum_{k=0}^{\infty}
                \left(\frac{\lambda}{c^{\alpha}}\right)^{k}
                \frac{(\sqrt{c^2t^2-x^2-y^2})^{k\alpha-2}}{\Gamma(k\alpha)},
            \end{align}
            is an analytic solution to the non-homogeneous fractional Klein--Gordon equation
            \begin{equation}
                \left(\frac{\partial^2}{\partial t^2}-c^2 \left\{\frac{\partial^2}{\partial x^2}
                +\frac{\partial^2}{\partial y^2}\right\}\right)^{\alpha}u_\alpha(x,y,t)
                =\lambda^2 u_\alpha(x,y,t)+
                \frac{\lambda c^{\alpha}\left(\sqrt{c^2t^2-x^2-y^2} \right)^{-\alpha-2}}{\Gamma(-\alpha)}.
            \end{equation}
        \end{coro}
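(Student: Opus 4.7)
The plan is to recognise that \eqref{cor1}, once transformed in the $w$-variable, is precisely the sum of the two series already analysed in Theorem~\ref{berger} and Lemma~\ref{sealand}, so the corollary reduces to invoking linearity of the fractional Bessel operator.

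First I would apply the substitution $w=\sqrt{c^{2}t^{2}-x^{2}-y^{2}}$ used throughout the section, which turns $(\partial_{t}^{2}-c^{2}\Delta)^{\alpha}$ into $c^{2\alpha}(L_B)^{\alpha}$ with $L_B=d^{2}/dw^{2}+(2/w)\,d/dw$, and rewrites \eqref{cor1} as
\begin{align*}
u_{\alpha}(w)=\sum_{k=0}^{\infty}\Bigl(\frac{\lambda}{c^{\alpha}}\Bigr)^{k}\frac{w^{k\alpha-2}}{\Gamma(k\alpha)}.
\end{align*}
The $k=0$ term drops out because $1/\Gamma(0)=0$. Separating the remaining sum by parity, the even-indexed part $k=2j$, $j\ge 1$, after reindexing $j=k'+1$, coincides term-by-term with the function in Theorem~\ref{berger}; the odd-indexed part $k=2j+1$, $j\ge 0$, coincides term-by-term with the function in Lemma~\ref{sealand}.

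Next I would invoke the two previous results. Theorem~\ref{berger} gives $(L_B)^{\alpha}u_{\alpha}^{\mathrm{even}}=(\lambda^{2}/c^{2\alpha})\,u_{\alpha}^{\mathrm{even}}$, while Lemma~\ref{sealand} gives $(L_B)^{\alpha}u_{\alpha}^{\mathrm{odd}}=(\lambda^{2}/c^{2\alpha})\,u_{\alpha}^{\mathrm{odd}}+(\lambda/c^{\alpha})\,w^{-\alpha-2}/\Gamma(-\alpha)$. Since $(L_B)^{\alpha}$ is defined in \eqref{pot} as a composition of linear Erd\'elyi--Kober operators and is therefore itself linear, adding the two identities produces
\begin{align*}
(L_B)^{\alpha}u_{\alpha}(w)=\frac{\lambda^{2}}{c^{2\alpha}}\,u_{\alpha}(w)+\frac{\lambda}{c^{\alpha}}\frac{w^{-\alpha-2}}{\Gamma(-\alpha)}.
\end{align*}
Multiplying through by $c^{2\alpha}$ and reverting to the $(x,y,t)$ variables via $w=\sqrt{c^{2}t^{2}-x^{2}-y^{2}}$ delivers the stated non-homogeneous Klein--Gordon equation.

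The only point that demands care, rather than a genuine obstacle, is bookkeeping: one must verify that the parity split reproduces the exponents and $\Gamma$-arguments of the two preceding results exactly (including the reindexing $j=k'+1$ on the even side), and keep track of the factor $c^{2\alpha}$ consistently through the change of variables. Once these matches are checked, the corollary follows without further computation.
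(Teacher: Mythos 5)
Your proposal is correct and follows exactly the paper's own argument: split the series by parity in the variable $w=\sqrt{c^2t^2-x^2-y^2}$, identify the even part with the function of Theorem \ref{berger} and the odd part with that of Lemma \ref{sealand}, and add the two identities by linearity of $(L_B)^\alpha$ before reverting to the original variables. The bookkeeping (vanishing $k=0$ term, reindexing, and the factor $c^{2\alpha}$) checks out, so nothing further is needed.
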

        \begin{proof}
            By splitting function \eqref{cor1} into even-order and odd-order
            terms and by considering Theorem \ref{berger} and Lemma \ref{sealand}, the result of corollary
            above immediately follows.
        \end{proof}

        We now study the distribution of the projection on the $x$-axis of the vector processes above, namely
        $X^{\alpha}_p(t)$, $t\geq 0$.
        We note that the distribution of $X^{\alpha}_p(t)$ possesses only the absolutely continuous component,
        unlike its two-dimensional counterpart.
        We first obtain
        \begin{align}
            \label{marg}
            \int_{-\sqrt{c^2t^2-x^2}}^{+\sqrt{c^2t^2-x^2}}\frac{k\alpha\left(\sqrt{c^2t^2-(x^2+y^2)}\right)^{\alpha k-2}}
            {2\pi(ct)^{\alpha k}}dy&=\frac{k\alpha\left(\sqrt{c^2t^2-x^2}\right)^{k\alpha-1}}{2\pi(ct)^{\alpha k}}
            \int_0^1\left(1-z\right)^{\frac{\alpha
            k}{2}-1}z^{-\frac{1}{2}}dz\\
            \nonumber &=\frac{k\alpha\left(\sqrt{c^2t^2-x^2}\right)^{k\alpha-1}}{2\pi(ct)^{\alpha
            k}}\frac{\Gamma(\frac{k\alpha}{2})\Gamma(\frac{1}{2})}{\Gamma(\frac{k\alpha+1}{2})}\\
            \nonumber &=\frac{\left(\sqrt{c^2t^2-x^2}\right)^{k\alpha-1}}{(2 ct)^{\alpha
            k}}\frac{\Gamma(k\alpha+1)}{[\Gamma(\frac{k\alpha+1}{2})]^2},
        \end{align}
        where we used the fact that
        \begin{align}
            \nonumber &\frac{k\alpha}{2}\Gamma\left(\frac{k\alpha}{2}\right)=\Gamma\left(\frac{k\alpha}{2}
            +\frac{1}{2}+\frac{1}{2}\right)
            =\frac{\Gamma(\frac{1}{2})2^{1-2(\frac{k\alpha+1}{2})}\Gamma(\alpha k+1)}{\Gamma(\frac{\alpha
            k+1}{2})}.
        \end{align}
        In view of \eqref{marg} we extract the density
        \begin{align}
            p^{\alpha}(x,t)&=\frac{1}{\pi\sqrt{c^2t^2-x^2}}\frac{1}{E_{\alpha,1}(\lambda
            t^{\alpha})}+\frac{1}{E_{\alpha,1}(\lambda
            t^{\alpha})}\frac{\lambda}{2^\alpha c^{\alpha}}\sum_{k=0}^{\infty}
            \left(\frac{\lambda}{2^\alpha c^{\alpha}}\right)^{k}
            \frac{\left(\sqrt{c^2t^2-x^2}\right)^{k\alpha+\alpha-1}}{[\Gamma(\frac{\alpha
            k+\alpha+1}{2})]^2}\\
            \nonumber &=\frac{1}{E_{\alpha,1}(\lambda
            t^{\alpha})}\sum_{k=-1}^{\infty}\left(\frac{\lambda}{2^\alpha c^{\alpha}}\right)^{k+1}
            \frac{\left(\sqrt{c^2t^2-x^2}\right)^{k\alpha+\alpha-1}}{[\Gamma(\frac{\alpha
            k+\alpha+1}{2})]^2}\\
            \nonumber&=\frac{1}{E_{\alpha,1}(\lambda
            t^{\alpha})}\sum_{k'=0}^{\infty}\left(\frac{\lambda}{2^\alpha c^{\alpha}}\right)^{k'}
            \frac{\left(\sqrt{c^2t^2-x^2}\right)^{k'\alpha-1}}{[\Gamma(\frac{\alpha k'+1}{2})]^2}\\
            \nonumber &= \frac{1}{\sqrt{c^2t^2-x^2}}\frac{E_{2;\frac{\alpha}{2},
            \frac{1}{2}}\left(\frac{\lambda}{2^\alpha c^{\alpha}}
            (\sqrt{c^2t^2-x^2})^{\alpha}\right)}{E_{\alpha,1}(\lambda t^{\alpha})}.
        \end{align}
        We observe that the components of the planar fractional
        telegraph-type process have distributions without singular part
        because the singular part is projected on the $x$-axis.
        Moreover we notice that we recover for $\alpha =1$, the known
        case discussed, for example, by \citet{ale2}.

        Let us consider the function
        \begin{align}
            \label{new}
            u_\alpha(x,t)= E_{\alpha,1}(\lambda
            t^{\alpha})p^{\alpha}(x,t)=\sum_{k=0}^{\infty}\left(\frac{\lambda}{2^\alpha c^{\alpha}}\right)^{k}
            \frac{(\sqrt{c^2t^2-x^2})^{k\alpha-1}}{[\Gamma(\frac{\alpha
            k+1}{2})]^2}.
        \end{align}
        For $u_\alpha(x,t)$, expressed in terms of $w=\sqrt{c^2t^2-x^2}$, we have the following theorem.
        \begin{te}
            The function \eqref{new} is an analytic solution of the equation
            \begin{equation}
                \label{pal}
                \left(\frac{d^2}{dw^2}+\frac{1}{w}\frac{d}{dw}\right)^{\alpha}u_\alpha(w)=
                \left(\frac{\lambda}{c^{\alpha}}\right)^2
                u_\alpha(w)+\frac{w^{-1-2\alpha}}{[\Gamma(\frac{1-2\alpha}{2})]^2}+\frac{\lambda}{c^{\alpha}}
                \frac{w^{-1-\alpha}}{[\Gamma(\frac{1-\alpha}{2})]^2}.
            \end{equation}
        \end{te}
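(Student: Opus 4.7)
The plan is to verify the identity termwise, exploiting the key formula derived in equation \eqref{pri} during the proof of Theorem \ref{gallico}, namely
\begin{align*}
    (L_B)^\alpha w^\beta = 4^\alpha \left[ \frac{\Gamma(\beta/2 + 1)}{\Gamma(1-\alpha + \beta/2)} \right]^2 w^{\beta - 2\alpha}.
\end{align*}
First I would apply this formula with $\beta = k\alpha - 1$ to each term of the series \eqref{new}. Noting that $\Gamma(\beta/2 + 1) = \Gamma((k\alpha+1)/2)$ and $\Gamma(1-\alpha+\beta/2) = \Gamma(((k-2)\alpha + 1)/2)$, the gamma factor in the numerator cancels the $[\Gamma((k\alpha+1)/2)]^{-2}$ in the coefficient of the series, yielding
\begin{align*}
    (L_B)^\alpha u_\alpha(w) = \sum_{k=0}^{\infty} \left( \frac{\lambda}{2^\alpha c^\alpha} \right)^{k} \frac{4^\alpha \, w^{(k-2)\alpha - 1}}{[\Gamma(((k-2)\alpha+1)/2)]^2}.
\end{align*}

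Next I would shift the summation index by setting $j = k - 2$. Using the identity
\begin{align*}
    \left( \frac{\lambda}{2^\alpha c^\alpha} \right)^{j+2} 4^\alpha = \left( \frac{\lambda}{c^\alpha} \right)^2 \left( \frac{\lambda}{2^\alpha c^\alpha} \right)^{j},
\end{align*}
the portion of the shifted series with $j \ge 0$ reconstructs exactly $(\lambda/c^\alpha)^2 u_\alpha(w)$, while the two extra terms at $j = -1$ and $j = -2$ produce the two inhomogeneous contributions involving $\Gamma((1-\alpha)/2)$ and $\Gamma((1-2\alpha)/2)$ in the denominators, matching the right-hand side of \eqref{pal}.

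The computation is essentially bookkeeping, and the main obstacle is carefully reconciling the prefactors of $4^\alpha$, $2^\alpha$, and powers of $c^\alpha$ coming from the combined coefficient $(\lambda/(2^\alpha c^\alpha))^{j+2} \cdot 4^\alpha$ evaluated at the shifted indices. A secondary concern is the justification of termwise application of $(L_B)^\alpha$ to the infinite series: this is legitimate because $u_\alpha(\cdot)$ is analytic on $(0, \infty)$ and lies in the functional space $F_{p,\mu}$ on which the Erd\'elyi--Kober operators underlying the definition \eqref{pot} of $(L_B)^\alpha$ act continuously, a fact already tacitly used in Theorems \ref{gallico} and \ref{berger}.
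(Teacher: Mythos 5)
Your route is exactly the paper's: the published proof is nothing more than the one-line remark that \eqref{pri} with $\beta=\alpha k-1$, applied termwise to \eqref{new} and followed by the index shift, gives the result, so methodologically there is nothing to add. The only substantive issue is precisely the prefactor bookkeeping that you identify as the main obstacle and then skip. Carrying it out, the $k$-th term of \eqref{new} is mapped to $\bigl(\tfrac{\lambda}{2^\alpha c^\alpha}\bigr)^k 4^\alpha\, w^{(k-2)\alpha-1}\big/[\Gamma(\tfrac{(k-2)\alpha+1}{2})]^2$; for $k\ge 2$ your identity $\bigl(\tfrac{\lambda}{2^\alpha c^\alpha}\bigr)^{j+2}4^\alpha=\bigl(\tfrac{\lambda}{c^\alpha}\bigr)^2\bigl(\tfrac{\lambda}{2^\alpha c^\alpha}\bigr)^{j}$ does reconstruct $(\lambda/c^\alpha)^2 u_\alpha(w)$, but the two leftover terms ($k=0$ and $k=1$) come out as $4^\alpha w^{-1-2\alpha}\big/[\Gamma(\tfrac{1-2\alpha}{2})]^2$ and $2^\alpha\tfrac{\lambda}{c^\alpha} w^{-1-\alpha}\big/[\Gamma(\tfrac{1-\alpha}{2})]^2$, so they do \emph{not} literally match the right-hand side of \eqref{pal}, which lacks the factors $4^\alpha$ and $2^\alpha$. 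The corrected constants are the consistent ones: for $\alpha=1$ a direct computation gives $L_B u_1=\tfrac{\lambda^2}{c^2}u_1+\tfrac{w^{-3}}{\pi}$, which equals $4\,w^{-3}/[\Gamma(-\tfrac12)]^2$ rather than $w^{-3}/[\Gamma(-\tfrac12)]^2$, and the odd-indexed half of \eqref{new} is exactly the function $H$ of Lemma \ref{ammel}, whose forcing term already carries the factor $\lambda 2^\alpha/c^\alpha$. So your plan is sound and coincides with the paper's, but the closing assertion of a match should be replaced by the constants the computation actually produces (the discrepancy sits in \eqref{pal} as printed, not in your scheme); the termwise application of $(L_B)^\alpha$ is justified in the same tacit way as in Theorem \ref{gallico}, so nothing further is needed there.
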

        \begin{proof}
            By exploiting the relationship \eqref{pri} for $\beta = \alpha k-1$, we arrive at \eqref{new}.
        \end{proof}

        \begin{os}
            We observe that in the case $\alpha =1$, the function
            \begin{align*}
                u_1(w) = \sum_{k=0}^{\infty}\left(\frac{\lambda}{2c}\right)^{k}
                \frac{w^{k-1}}{[\Gamma(\frac{
                k+1}{2})]^2},
            \end{align*}
            is an analytic solution of the inhomogeneous Bessel-type
            equation
            \begin{equation}
                \left(\frac{d^2}{dw^2}+\frac{1}{w}\frac{d}{dw}\right)u_1(w)=
                \frac{\lambda^2}{c^2}u_1(w)+\frac{1}{w^3[\Gamma(-\frac{1}{2})]^2}.
            \end{equation}
            On the other hand, for $\alpha=1/2$, the first non-homogeneous term in \eqref{pal}
            vanishes and we have that the function
            \begin{equation}
                u_{1/2}(w) = \sum_{k=0}^{\infty}\left(\frac{\lambda}{2c}\right)^{k}
                \frac{w^{\frac{k}{2}-1}}{[\Gamma(\frac{k}{4}+\frac{1}{2})]^2}
            \end{equation}
            is an analytic solution of the inhomogeneous Bessel-type
            equation
            \begin{equation}
                \left(\frac{d^2}{dw^2}+\frac{1}{w}\frac{d}{dw}\right)^{1/2}u_{1/2}(w)=
                \frac{\lambda^2}{c}u_{1/2}(w)+\frac{\lambda}{\sqrt{c}}\frac{1}{\sqrt{w^3}[\Gamma(\frac{1}{4})]^2}.
            \end{equation}
        \end{os}

    \section{$N$-Dimensional fractional random flights}

        We now treat the general $N$-dimensional fractional Klein--Gordon equation, i.e.\
        \begin{equation}
            \label{ddim}
            \left(\frac{\partial^2}{\partial t^2}-c^2\Delta \right)^{\alpha}u_\alpha (\mathbf{x},t)
            =\lambda^2 u_\alpha(\mathbf{x},t), \qquad \alpha \in (0,1], \: \mathbf{x}\in \mathbb{R}^N.
        \end{equation}
        By means of the transformation
        \begin{align*}
            w =\left(c^2t^2-\sum_{k=1}^N x_k^2 \right)^{1/2},
        \end{align*}
        where $x_k$ is the $k$-th coordinate of the $N$-dimensional vector $\mathbf{x}$, we transform \eqref{ddim} in
        \begin{equation}
            \label{Lddi}
            \left(\frac{d^2}{dw^2}+\frac{N}{w}\frac{d}{dw}\right)^{\alpha}u_\alpha(w)
            =\frac{\lambda^2}{c^{2\alpha}}u_\alpha(w).
        \end{equation}
        The operator appearing in \eqref{Lddi} can be considered again as a
        specific case of the operator \eqref{L} with $a_1=-N$, $a_2=N$,
        $a_3=0$, $a=0$, $n=m=2$, $b_1=\frac{N-1}{2}$ and $b_2=0$. Hence, from \eqref{pot} we have
        that
        \begin{equation}
            \left(\frac{d^2}{dw^2}+\frac{N}{w}\frac{d}{dw}\right)^{\alpha}u_\alpha (w)=4^{\alpha}w^{-2\alpha}
            I_2^{0,-\alpha}I_2^{\frac{N-1}{2}, -\alpha}u_\alpha (w).
        \end{equation}
        We are now ready to state the following
        \begin{te}
            \label{parafarmacia}
            A solution to \eqref{Lddi} is given by
            \begin{equation}
                \label{Ndim}
                u_\alpha (w)= \sum_{k=0}^{\infty}\left(\frac{\lambda}{2^\alpha c^{\alpha}}\right)^{2k}
                \frac{w^{2\alpha k+2\alpha-2}}{\Gamma(\alpha k+\alpha+\frac{N-1}{2})\Gamma(\alpha k+\alpha)}.
            \end{equation}
        \end{te}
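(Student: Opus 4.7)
The plan is to mimic the argument used for Theorem \ref{gallico} and Theorem \ref{berger}. First, I would compute the action of the fractional hyper-Bessel operator on a generic monomial $w^\beta$. Using the McBride representation
$$\left(\frac{d^2}{dw^2}+\frac{N}{w}\frac{d}{dw}\right)^{\alpha} = 4^{\alpha} w^{-2\alpha} I_2^{0,-\alpha} I_2^{(N-1)/2,-\alpha},$$
together with the recursion \eqref{mc2}, each negative-order Erd\'elyi--Kober operator reduces to $I_2^{\eta,1-\alpha}$ plus a linear prefactor, after which Lemma \ref{brunello} evaluates the result on $w^\beta$ in closed form.

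Second, I would simplify the resulting coefficient. The calculation yields two scalar prefactors $\big(\tfrac{N+1}{2}-\alpha+\tfrac{\beta}{2}\big)$ and $\big(1-\alpha+\tfrac{\beta}{2}\big)$ multiplying a ratio of four Gamma functions. Because $\big(\tfrac{N+1}{2}-\alpha+\tfrac{\beta}{2}\big)+1 = \tfrac{N-1}{2}+2-\alpha+\tfrac{\beta}{2}$ (and likewise for the second prefactor), applying $x\Gamma(x)=\Gamma(x+1)$ absorbs each prefactor into its matching Gamma denominator. After this telescoping the identity collapses to the compact form
$$\left(\frac{d^2}{dw^2}+\frac{N}{w}\frac{d}{dw}\right)^{\alpha} w^{\beta} = 4^{\alpha}\,\frac{\Gamma\!\left(\tfrac{N+1}{2}+\tfrac{\beta}{2}\right)\Gamma\!\left(\tfrac{\beta}{2}+1\right)}{\Gamma\!\left(\tfrac{N+1}{2}+\tfrac{\beta}{2}-\alpha\right)\Gamma\!\left(1-\alpha+\tfrac{\beta}{2}\right)}\,w^{\beta-2\alpha},$$
which specializes to the already-established formulas of Theorems \ref{gallico} and \ref{berger} for $N=1$ and $N=2$.

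Third, I would apply this identity termwise to the series \eqref{Ndim} with $\beta = 2\alpha k + 2\alpha - 2$. The substitutions $\tfrac{\beta}{2}+1 = \alpha k + \alpha$, $\tfrac{\beta}{2}+\tfrac{N+1}{2} = \alpha k + \alpha + \tfrac{N-1}{2}$, $\tfrac{\beta}{2}+1-\alpha = \alpha k$ and $\tfrac{\beta}{2}+\tfrac{N+1}{2}-\alpha = \alpha k + \tfrac{N-1}{2}$ produce massive cancellations with the Gamma factors in the denominator of \eqref{Ndim}, leaving
$$\left(\frac{d^2}{dw^2}+\frac{N}{w}\frac{d}{dw}\right)^{\alpha} u_\alpha(w) = 4^{\alpha}\sum_{k=0}^{\infty}\left(\frac{\lambda}{2^{\alpha} c^{\alpha}}\right)^{2k}\frac{w^{2\alpha k-2}}{\Gamma(\alpha k+\tfrac{N-1}{2})\Gamma(\alpha k)}.$$
The $k=0$ term vanishes because $1/\Gamma(0)=0$, and the shift $k\mapsto k+1$ in the remaining sum reproduces $(\lambda^2/c^{2\alpha})\,u_\alpha(w)$, completing the proof.

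The main obstacle is the telescoping of the Gamma products in the second step: the two successive Erd\'elyi--Kober operators produce prefactors that are not obviously compatible with the Gamma denominators they multiply, and one must carefully exploit the shift identity $(N+1)/2 - \alpha + \beta/2 + 1 = (N-1)/2 + 2 - \alpha + \beta/2$ (and its counterpart without $N$) to recognize the telescoping. Everything else is a reindexing of the series in complete analogy with the low-dimensional cases already treated.
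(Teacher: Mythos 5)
Your proposal is correct and follows essentially the same route as the paper: reduce $(d^2/dw^2+\tfrac{N}{w}d/dw)^\alpha$ to $4^\alpha w^{-2\alpha}I_2^{0,-\alpha}I_2^{(N-1)/2,-\alpha}$, evaluate it on monomials via the recursion \eqref{mc2} and Lemma \ref{brunello}, absorb the prefactors with $x\Gamma(x)=\Gamma(x+1)$ to get exactly the closed form you state, and then apply it termwise with $\beta=2\alpha k+2\alpha-2$, drop the vanishing $k=0$ term, and reindex. The only difference is cosmetic: you make the vanishing of the $k=0$ term via $1/\Gamma(0)=0$ explicit, which the paper leaves implicit.
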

        \begin{proof}
            We first observe that
            \begin{align}
                \nonumber &\left(\frac{d^2}{dw^2}+\frac{N}{w}\frac{d}{dw}\right)^{\alpha}w^{\beta}=4^{\alpha}w^{-2\alpha}
                I_2^{0,-\alpha}I_2^{\frac{N-1}{2}, -\alpha}w^{\beta}\\
                \nonumber &=4^{\alpha}w^{-2\alpha}
                I_2^{0,-\alpha}\left(\frac{N-1}{2}+1-\alpha+\frac{\beta}{2}\right)
                \frac{\Gamma(\frac{N-1}{2}+1+\frac{\beta}{2})}
                {\Gamma(1-\alpha+\frac{N-1}{2}+\frac{\beta}{2}+1)}w^{\beta}\\
                \nonumber &=4^{\alpha}w^{-2\alpha}
                \left(1-\alpha+\frac{\beta}{2}\right)\left(\frac{N-1}{2}+1-\alpha+\frac{\beta}{2}\right)
                \frac{\Gamma(\frac{\beta}{2}+1)}{\Gamma(1-\alpha+\frac{\beta}{2}+1)}
                \frac{\Gamma(\frac{N-1}{2}+1+\frac{\beta}{2})}{\Gamma(1-\alpha+\frac{N-1}{2}
                +\frac{\beta}{2}+1)}w^{\beta}\\
                \nonumber &= 4^{\alpha}w^{\beta-2\alpha}\frac{\Gamma(\frac{\beta}{2}+1)}
                {\Gamma(1-\alpha+\frac{\beta}{2})}
                \frac{\Gamma(\frac{N-1}{2}+1+\frac{\beta}{2})}
                {\Gamma(1-\alpha+\frac{N-1}{2}+\frac{\beta}{2})}.
            \end{align}
            Hence we have that
            \begin{align}
                \nonumber &\left(\frac{d^2}{dw^2}+\frac{N}{w}\frac{d}{dw}\right)^{\alpha}\sum_{k=0}^{\infty}
                \left(\frac{\lambda}{2^\alpha c^{\alpha}}\right)^{2k}
                \frac{w^{2\alpha k+2\alpha-2}}{\Gamma(\alpha k+\alpha+\frac{N-1}{2})\Gamma(\alpha k+\alpha)}\\
                \nonumber &=4^{\alpha}\sum_{k=0}^{\infty}
                \left(\frac{\lambda}{2^\alpha c^{\alpha}}\right)^{2k}
                \frac{w^{2\alpha k-2}}{\Gamma(\alpha k+\frac{N-1}{2})\Gamma(\alpha k)}\\
                \nonumber &=\left(\frac{\lambda}{c^{\alpha}}\right)^{2}\sum_{k'=0}^{\infty}
                \left(\frac{\lambda}{2^\alpha c^{\alpha}}\right)^{2k'}
                \frac{w^{2\alpha k'+2\alpha-2}}{\Gamma(\alpha k'+\alpha+\frac{N-1}{2})\Gamma(\alpha k'+\alpha)},
            \end{align}
            as claimed.
        \end{proof}

        \begin{os}
            From Theorem \ref{parafarmacia}, writing the function \eqref{Ndim} in
            terms of the variables $(\mathbf{x},t)$, we have that
            \begin{equation}
                u_\alpha(\mathbf{x},t)=\sum_{k=0}^{\infty}\left(\frac{\lambda}{2^\alpha c^{\alpha}}\right)^{2k}
                \frac{\left(\sqrt{c^2t^2-\sum_{k=1}^N x_k^2}\right)^{2\alpha k+2\alpha-2}}
                {\Gamma(\alpha k+\alpha+\frac{N-1}{2})\Gamma(\alpha k+\alpha)}
            \end{equation}
            solves the $N$-dimensional fractional Klein--Gordon equation
            \begin{equation}
                \left(\frac{\partial^2}{\partial t^2}-c^2\Delta\right)^{\alpha}
                u_\alpha(\mathbf{x},t)=\lambda^2 u_\alpha(\mathbf{x},t).
            \end{equation}
            Moreover, for $N=1$ and $N=2$, we recover the results found
            above, up to a multiplicative term.
        \end{os}

        \begin{os}
            Inspired by Theorem \ref{parafarmacia}, we construct a fractional
            extension of the random flight in $\mathbb{R}^4$, previously
            studied in \citet{ale2}. For the vector process
            $\mathbf{X}(t)=(X_1(t),X_2(t),X_3(t),X_4(t))$ we write
            \begin{equation}
                \label{4co}
                P\Biggl\{\bigcap_{j=1}^4 \{ X_j(t)\in
                dx_j \} \Bigr|\mathcal{N}^{\alpha/2}(t)=k\Biggr\}
                =\frac{\left(\frac{k\alpha}{2}+1\right)\frac{k\alpha}{2}\left(
                \sqrt{c^2t^2-\|\mathbf{x}\|^2}\right)^{k\alpha-2}}{\pi^2(ct)^{k\alpha+2}}\prod_{j=1}^{4}dx_j,
            \end{equation}
            for $\alpha\in (1,2]$ and $\|\mathbf{x}\|^2\leq c^2t^2$. We note
            that, for $\alpha =2$, \eqref{4co} coincides with formula (1.5)
            of \citet{ale2}. The distribution of the fractional Poisson
            process $\mathcal{N}^{\alpha/2}(t)$, $t\geq 0$, reads
            \begin{equation}
                \label{4do}
                P\{\mathcal{N}^{\alpha/2}(t)=k\}=\frac{\left(\lambda
                t^{\alpha/2}\right)^k}{\Gamma\left(\frac{k\alpha}{2}+1\right)}
                \frac{1}{E_{\alpha/2,1}(\lambda t^{\alpha/2})}, \qquad k\geq 0, \:
                \alpha \in(1,2].
            \end{equation}
            The absolutely continuous component of the distribution of
            $\mathbf{X}(t)$ is given by
            \begin{align}
                \label{4dp}
                &p^{\alpha}(\mathbf{x},t)=\sum_{k=0}^{\infty}\frac{\left(\frac{k\alpha}{2}+1\right)\frac{k\alpha}{2}\left(
                \sqrt{c^2t^2-\|\mathbf{x}\|^2}\right)^{k\alpha-2}}{\pi^2(ct)^{k\alpha+2}}
                \frac{\left(\lambda t^{\alpha/2}\right)^k}
                {\Gamma\left(\frac{k\alpha}{2}+1\right)}
                \frac{1}{E_{\alpha/2,1}(\lambda t^{\alpha/2})}\\
                \nonumber &=\frac{1}{\pi^2 E_{\alpha/2,1}(\lambda
                t^{\frac{\alpha}{2}})(ct)^2}\left[\sum_{k=0}^{\infty}
                \left(\frac{\lambda}{c^{\alpha}t^{\alpha/2}}\right)^k\frac{\left(
                \sqrt{c^2t^2-\|\mathbf{x}\|^2}\right)^{k\alpha-2}}{\Gamma\left(\frac{k\alpha}{2}-1\right)}
                +2\sum_{k=0}^{\infty}\left(\frac{\lambda}{c^{\alpha}t^{\alpha/2}}\right)^k\frac{\left(
                \sqrt{c^2t^2-\|\mathbf{x}\|^2}\right)^{k\alpha-2}}{\Gamma\left(\frac{k\alpha}{2}\right)}\right]\\
                \nonumber &=\frac{\lambda}{\pi^2 c^{2+\alpha}t^{2+\alpha/2}E_{\alpha/2,1}(\lambda
                t^{\alpha/2})\left(
                \sqrt{c^2t^2-\|\mathbf{x}\|^2}\right)^{2-\alpha}}\bigg[E_{\alpha/2,\alpha/2-1}
                \left(\frac{\lambda}{c^{\alpha}t^{\alpha/2}}\left(
                \sqrt{c^2t^2-\|\mathbf{x}\|^2}\right)^{\alpha}\right)\\
                \nonumber & \quad +2E_{\alpha/2,\alpha/2}\left(\frac{\lambda}{c^{\alpha}t^{\alpha/2}}\left(
                \sqrt{c^2t^2-\|\mathbf{x}\|^2}\right)^{\alpha}\right)\bigg].
            \end{align}
            For $\alpha=2$, formula \eqref{4dp} coincides with (3.7) of
            \citet{ale2}, since
            \begin{align*}
                E_{1,0}(x)=xe^x,\qquad E_{1,1}(x)=e^x.
            \end{align*}
            The fractional random flight in $\mathbb{R}^4$ can be viewed as
            a motion with a binomial number $B(n,\frac{\alpha}{2})$ of
            changes of orientations (uniformly distributed on the
            hypersphere), where $n$ possesses fractional Poisson
            distribution given by \eqref{4do}. The conditional distribution
            \eqref{4co} is thus written as (see Remark \ref{binoz} for the planar case)
            \begin{align*}
                P\Biggl\{\bigcap_{j=1}^4 \{ X_j(t)\in
                dx_j \} \Bigr|\mathcal{N}(t)=\mathbb{E}B\left(n,\frac{\alpha}{2}\right)\Biggr\}.
            \end{align*}
        \end{os}

    \section{Higher order cases}

        We devote this section to the fractional hyper-Bessel operators
        \begin{equation}
             (L_{B_n})^{\alpha}= \Biggl(\frac{1}{w^{n}}\Biggl(\underbrace{w\frac{d}{dw}w
             \frac{d}{dw}\dots w\frac{d}{dw}}_{\text{$n$
             times}}\Biggr)\Biggr)^{\alpha}, \qquad \alpha \in (0,1].
        \end{equation}
        We first treat in detail the fractional third-order Bessel
        equation:
        \begin{equation}
            (L_{B_3})^{\alpha}f(w)=\left(\frac{1}{w^2}\frac{d}{dw}+\frac{3}{w}\frac{d^2}{dw^2}
            +\frac{d^3}{dw^3}\right)^{\alpha}f(w)
            =\left(\frac{\lambda}{c^{\alpha}}\right)^3 f(w).\qquad \alpha\in(0,1].
        \end{equation}
        The operator $L_{B_3}$ coincides with \eqref{L}, for $n=3$, $a_1=-2$, $a_2=a_3=1$, $a_4=0$.
        Therefore, in view of Lemma \ref{duepuntouno}, we obtain that
        $a=0$, $m=3$, $b_1=b_2=b_3=0$. Indeed
        \begin{align*}
            L_{B_3}=\frac{1}{w^2}\frac{d}{dw}\left(w\frac{d}{dw}\right)^2.
        \end{align*}
        Therefore
        \begin{align}
            \left(\frac{1}{w^2}\frac{d}{dw}+\frac{3}{w}\frac{d^2}{dw^2}+\frac{d^3}{dw^3}\right)^{\alpha}f(w)&=
            3^{3\alpha}w^{-3\alpha}\prod_{k=1}^{3}I_3^{b_k,-\alpha}f(w)\\
            \nonumber &=3^{3\alpha}w^{-3\alpha}I_3^{0,-\alpha}I_3^{0,-\alpha}I_3^{0,-\alpha}f(w).
        \end{align}

        \begin{te}
            Let $\alpha\in(0,1]$, then the equation
            \begin{align*}
                \left(\frac{1}{w^2}\frac{d}{dw}+\frac{3}{w}\frac{d^2}{dw^2}+\frac{d^3}{dw^3}\right)^{\alpha}f(w)
                = f(w), \quad w\in \mathbb{R},
            \end{align*}
            is satisfied by
            \begin{equation}
                f(w)=w^{3\alpha-3}\sum_{k=0}^{\infty}\left(\frac{w}{3}\right)^{3\alpha k}
                \frac{1}{[\Gamma(\alpha k+\alpha)]^3}.
            \end{equation}
        \end{te}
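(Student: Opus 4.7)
The plan is to mimic the arguments used in Theorem \ref{gallico} and Theorem \ref{parafarmacia}, carrying out the corresponding computation for the third-order hyper-Bessel operator. The representation already supplied by Lemma \ref{duepuntouno} gives
\begin{align*}
    (L_{B_3})^{\alpha} f(w) = 3^{3\alpha} w^{-3\alpha}\, I_3^{0,-\alpha} I_3^{0,-\alpha} I_3^{0,-\alpha} f(w),
\end{align*}
so everything reduces to the action of the Erd\'elyi--Kober operator $I_3^{0,-\alpha}$ on monomials.

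The first step is to evaluate $(L_{B_3})^{\alpha} w^{\beta}$. Since $-\alpha<0$, I would first apply formula \eqref{mc2} to rewrite
\begin{align*}
    I_3^{0,-\alpha} w^{\beta} = (1-\alpha) I_3^{0,1-\alpha} w^{\beta} + \tfrac{1}{3} I_3^{0,1-\alpha}\!\left(w\tfrac{d}{dw} w^{\beta}\right) = \left(1-\alpha+\tfrac{\beta}{3}\right) I_3^{0,1-\alpha} w^{\beta},
\end{align*}
and then invoke Lemma \ref{brunello} to obtain $I_3^{0,1-\alpha} w^{\beta} = \Gamma(\tfrac{\beta}{3}+1)/\Gamma(2-\alpha+\tfrac{\beta}{3})\, w^{\beta}$. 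Using $\Gamma(z+1)=z\Gamma(z)$ with $z=1-\alpha+\tfrac{\beta}{3}$, the prefactor telescopes and yields
\begin{align*}
    I_3^{0,-\alpha} w^{\beta} = \frac{\Gamma(\tfrac{\beta}{3}+1)}{\Gamma(1-\alpha+\tfrac{\beta}{3})}\, w^{\beta}.
\end{align*}
Because this operator preserves the monomial $w^{\beta}$, three iterations simply cube the constant, giving
\begin{align*}
    (L_{B_3})^{\alpha} w^{\beta} = 3^{3\alpha}\!\left[\frac{\Gamma(\tfrac{\beta}{3}+1)}{\Gamma(1-\alpha+\tfrac{\beta}{3})}\right]^{3} w^{\beta-3\alpha}.
\end{align*}

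The second step is to apply this to the general term $w^{3\alpha k+3\alpha-3}/(3^{3\alpha k}[\Gamma(\alpha k+\alpha)]^{3})$ of the proposed series. With $\beta=3\alpha k+3\alpha-3$ one has $\tfrac{\beta}{3}+1=\alpha k+\alpha$ and $1-\alpha+\tfrac{\beta}{3}=\alpha k$, so the bracket becomes $[\Gamma(\alpha k+\alpha)/\Gamma(\alpha k)]^{3}$; the factors $[\Gamma(\alpha k+\alpha)]^{3}$ cancel and the $3^{3\alpha}$ combines with $3^{-3\alpha k}$ to produce $3^{-3\alpha(k-1)}$. The term for $k=0$ vanishes thanks to the pole of $\Gamma$ at $0$ (so $1/\Gamma(\alpha k)^{3}\to 0$), which allows the index shift $k'=k-1$ to recover exactly the original series and conclude $(L_{B_3})^{\alpha}f(w)=f(w)$.

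The computations are all routine once the first step is handled correctly; the only subtle point is the reduction from $I_3^{0,-\alpha}$ (negative order, so Lemma \ref{brunello} does not apply directly) to $I_3^{0,1-\alpha}$ via \eqref{mc2}, and the resulting telescoping using the functional equation of $\Gamma$. The vanishing of the $k=0$ term — which is what converts the raised power $w^{\beta-3\alpha}$ back into a term of the original series after re-indexing — is the conceptual analogue of the shifts performed in Theorems \ref{gallico} and \ref{parafarmacia}, so I do not expect any essential obstacle beyond bookkeeping.
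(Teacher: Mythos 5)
Your proof is correct and follows essentially the same route as the paper: the McBride representation $3^{3\alpha}w^{-3\alpha}I_3^{0,-\alpha}I_3^{0,-\alpha}I_3^{0,-\alpha}$, reduction of $I_3^{0,-\alpha}$ on monomials via \eqref{mc2} and Lemma \ref{brunello}, the Gamma functional equation, and the index shift with the $k=0$ term killed by the pole of $\Gamma$ at $0$. The only (immaterial) difference is that you simplify $I_3^{0,-\alpha}w^{\beta}$ completely before cubing, whereas the paper extracts the three factors $\bigl(1-\alpha+\tfrac{\beta}{3}\bigr)$ first and applies Lemma \ref{brunello} three times afterwards.
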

        \begin{proof}
            We first observe that for $\beta >0$
            \begin{align}
                (L_{B_3})^{\alpha}w^{\beta}&=3^{3\alpha}w^{-3\alpha}I_3^{0,-\alpha}I_3^{0,-\alpha}
                I_3^{0,-\alpha}w^{\beta}\\
                \nonumber& =3^{3\alpha}w^{-3\alpha}\left[(1-\alpha)I_3^{0, 1-\alpha}+\frac{1}{3}
                I_3^{0, 1-\alpha}\left(w\frac{d}{dx}\right)\right]^3 w^{\beta}\\
                \nonumber& =3^{3\alpha}w^{-3\alpha}(1-\alpha+\frac{1}{3}\beta)^3
                I_3^{0,1-\alpha}I_3^{0,1-\alpha}I_3^{0,1-\alpha}w^{\beta}\\
                \nonumber &=3^{3\alpha}w^{-3\alpha}(1-\alpha+\frac{1}{3}\beta)^3
                \left[\frac{\Gamma\left(\frac{\beta}{3}+1\right)}
                {\Gamma\left(1-\alpha+1+\frac{\beta}{3}\right)}\right]^3 w^{\beta}\\
                \nonumber &=3^{3\alpha}\left[\frac{\Gamma\left(\frac{\beta}{3}+1\right)}
                {\Gamma\left(1-\alpha+\frac{\beta}{3}\right)}\right]^3 w^{\beta-3\alpha}.
            \end{align}
            Then we immediately have
            \begin{align}
                (L_{B_3})^{\alpha}\left(w^{3\alpha-3}\sum_{k=0}^{\infty}\frac{1}{3^{3\alpha k}}
                \frac{w^{3\alpha k}}{[\Gamma(\alpha k+\alpha)]^3}\right)=
                \nonumber 3^{3\alpha}\sum_{k=0}^{\infty}\frac{1}{3^{3\alpha k}}
                \frac{w^{3\alpha k-3}}{[\Gamma(\alpha k)]^3}=
                \nonumber \sum_{k'=0}^{\infty}\frac{1}{3^{3\alpha k}}
                \frac{w^{3\alpha k'+3\alpha-3}}{[\Gamma(\alpha k'+\alpha)]^3}.
            \end{align}
        \end{proof}

        We finally arrive at the general theorem that can be easily proved as the preceding one.
        \begin{te}
            Let be $\alpha\in(0,1]$, $n\in \mathbb{N}$. The equation
            \begin{equation}
                \label{hoe}
                \left(\frac{1}{w^{n}}\left(w\frac{d}{dw}\right)^{n}\right)^{\alpha}f(w)= f(w),
            \end{equation}
            is satisfied by
            \begin{equation}
                f(w)=w^{n\alpha-n}\sum_{k=0}^{\infty}\left(\frac{w}{n}\right)^{n\alpha k}
                \frac{1}{[\Gamma(\alpha k+\alpha)]^n}.
            \end{equation}
        \end{te}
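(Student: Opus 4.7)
The plan is to follow exactly the template of the preceding third-order theorem, keeping $n$ as a free parameter throughout. The first step is to recognise $L_{B_n}=w^{-n}(w\,d/dw)^n$ as a special case of the McBride operator $L$ of Section~\ref{prel}: expanding the $n$-fold product one reads off $a_1=-(n-1)$, $a_2=\cdots=a_n=1$, $a_{n+1}=0$. Lemma~\ref{duepuntouno} then yields $a=0$, $m=n$, and the formula $b_k=\frac{1}{n}\bigl(\sum_{i=k+1}^{n+1} a_i + k - n\bigr)$ gives $b_1=\cdots=b_n=0$. By definition~\eqref{pot} one therefore has
$$(L_{B_n})^\alpha f(w) \;=\; n^{n\alpha}\, w^{-n\alpha}\prod_{k=1}^{n}I_n^{0,-\alpha} f(w).$$

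The second step is to evaluate this operator on an arbitrary monomial $w^\beta$. Using \eqref{mc2} in the form $I_n^{0,-\alpha}=(1-\alpha)I_n^{0,1-\alpha}+\frac{1}{n}I_n^{0,1-\alpha}(w\,d/dw)$, noting that $w\,d/dw$ acts as the scalar $\beta$ on $w^\beta$, and invoking Lemma~\ref{brunello} for $I_n^{0,1-\alpha} w^\beta = \Gamma(\beta/n+1)/\Gamma(2-\alpha+\beta/n)\,w^\beta$, I observe that each application of $I_n^{0,-\alpha}$ returns a scalar multiple of $w^\beta$. After the simplification $\Gamma(2-\alpha+\beta/n) = (1-\alpha+\beta/n)\Gamma(1-\alpha+\beta/n)$, the scalar collapses to $\Gamma(\beta/n+1)/\Gamma(1-\alpha+\beta/n)$, and iterating $n$ times gives
$$(L_{B_n})^\alpha w^\beta \;=\; n^{n\alpha}\left[\frac{\Gamma(\beta/n+1)}{\Gamma(1-\alpha+\beta/n)}\right]^{n} w^{\beta-n\alpha},$$
which is the direct generalisation of \eqref{pri} and of its third-order counterpart already used in the preceding proof.

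The third step is to apply this identity termwise to the series for $f$. Setting $\beta=n\alpha k+n\alpha-n$ I obtain $\beta/n+1=\alpha k+\alpha$ and $1-\alpha+\beta/n=\alpha k$, so the bracket equals $[\Gamma(\alpha k+\alpha)/\Gamma(\alpha k)]^n$, which cancels exactly the denominator of the $k$-th summand of $f$. The $k=0$ summand vanishes because $1/[\Gamma(\alpha k)]^n\to 0$, and after reindexing $k'=k-1$, combined with $n^{n\alpha}/n^{n\alpha k}=1/n^{n\alpha k'}$, the resulting series is precisely $f(w)$. I do not foresee any genuine obstacle: the real work is the bookkeeping of the $n$-fold Gamma-function telescoping in step two, which is routine; the termwise application of $(L_{B_n})^\alpha$ is legitimate because each summand lies in the McBride space $F_{p,\mu}$ on which the fractional operator is well-defined, and the series converges uniformly on compact subsets of $(0,\infty)$.
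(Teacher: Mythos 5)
Your proof is correct and follows exactly the route the paper intends: the paper proves the third-order case via the McBride representation $(L_{B_3})^\alpha=3^{3\alpha}w^{-3\alpha}(I_3^{0,-\alpha})^3$, the monomial formula from \eqref{mc2} and Lemma \ref{brunello}, and termwise application with the $k=0$ term killed by $1/\Gamma(0)=0$, and then states that the general theorem ``can be easily proved as the preceding one'' --- which is precisely your argument with $n$ kept generic (your identifications $a_1=-(n-1)$, $a_2=\cdots=a_n=1$, $a_{n+1}=0$, $m=n$, $b_k=0$ and the resulting scalar $n^{n\alpha}[\Gamma(\beta/n+1)/\Gamma(1-\alpha+\beta/n)]^n$ all check out). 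No substantive difference from the paper's approach.
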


        \begin{os}
            We observe that the fractional higher order equation
            \begin{equation}
                \label{best}
                \left(\frac{1}{w^2}\frac{d}{dw}+\frac{3}{w}\frac{d^2}{dw^2}+\frac{d^3}{dw^3}\right)^{\alpha}f(w)=
                \left(\frac{\sqrt[3]{6}\lambda}{c}\right)^{3\alpha}f(w)
            \end{equation}
            is directly related to the fractional partial differential equation in $2+1$ variables
            \begin{equation}
                \label{o3}
                \left(\frac{\partial}{\partial t}+c\frac{\partial^{\alpha}}{\partial x}\right)^{\alpha}
                \left(\frac{\partial}{\partial t}-\frac{c}{2}\frac{\partial}{\partial x}
                +\frac{c\sqrt{3}}{2}\frac{\partial}{\partial y}\right)^{\alpha}
                \left(\frac{\partial}{\partial t}-\frac{c}{2}\frac{\partial}{\partial x}
                -\frac{c\sqrt{3}}{2}\frac{\partial}{\partial y}\right)^{\alpha}u_\alpha(x,y,t)
                = \lambda^3 u_\alpha(x,y,t).
            \end{equation}
            Indeed it can be obtained from \eqref{o3} by a sequence  of two transformations (see \cite{Or}).
            The first one is
            \begin{align}\nonumber
                \begin{cases}
                    z_1=\frac{ct}{2}+x,\\
                    z_2=\frac{ct-x}{\sqrt{3}}+y,\\
                    z_3=\frac{ct-x}{\sqrt{3}}-y,
                \end{cases}
            \end{align}
            that reduces \eqref{o3} to
            \begin{equation}
                \left(\frac{9c^3}{2}\frac{\partial}{\partial z_1}\frac{\partial}{\partial z_2}
                \frac{\partial}{\partial z_3 }\right)^{\alpha}u_\alpha(z_1,z_2,z_3)=\lambda^3 u_\alpha(z_1,z_2,z_3).
            \end{equation}
            Then, by means of a second transformation $w=\sqrt[3]{z_1 z_2 z_3}$, we obtain \eqref{best}.
            In turn, by considering
            \begin{align*}
                w'=\frac{\sqrt[3]{6}\lambda w}{c},
            \end{align*}
            the equation \eqref{best} can be converted in \eqref{hoe}.
            Therefore the solution of \eqref{o3} can be written as
            \begin{equation}
                \label{solho}
                u_\alpha (x,y,t)=
                \sum_{k=0}^{\infty}\left(\frac{\lambda}{3c}\sqrt[3]{(ct+3x)[(ct-x)^2-3y^2]}\right)^{3\alpha
                k+3\alpha-3}\frac{1}{[\Gamma(\alpha k+\alpha]^3}.
            \end{equation}
            A specific case of \eqref{solho} for $\alpha=1$ reads
            \begin{equation}
                I_{0,3}\left(\frac{\lambda}{c}\sqrt[3]{(ct+3x)[(ct-x)^2-3y^2]}\right),
            \end{equation}
            where
            \begin{align*}
                I_{0,3}(x)=\sum_{k=0}^{\infty}\left(\frac{x}{3}\right)^{3k}\frac{1}{(k!)^3},
            \end{align*}
            is the third-order Bessel function.

            Equation \eqref{o3} emerges in the study of a planar cyclic
            random motion with three directions \citep{Or}. The
            fractional version of this random motion can be obtained from
            its integer counterpart by introducing a randomization of the
            number of changes of direction as done in the previously
            analyzed cases.
        \end{os}

    \section{An application to the fractional Euler--Poisson--Darboux equation}

        We now introduce the following fractional formulation of the
        classical Euler--Poisson--Darboux equation
        \citep[see for example][]{e-p}
        \begin{equation}
            \label{dar}
            \left(\frac{1}{t^{\chi}}\frac{\partial}{\partial t}t^{\chi}\frac{\partial}{\partial t}\right)^{\alpha}
            f(\mathbf{x},t)=
            \left(\frac{\partial^2}{\partial t^2}+\frac{\chi}{t}\frac{\partial}{\partial t}\right)^{\alpha}
            f(\mathbf{x},t)=
            \Delta f(\mathbf{x},t),
        \end{equation}
        where $\alpha\in(0,1]$, $t\geq 0$, $\mathbf{x}\in\mathbb{R}^N$ and $\chi \in \mathbb{R}$.
        Clearly the operator appearing in \eqref{dar} is a special
        case of \eqref{L}, with $n=2$, $a_1= -\chi$, $a_2= \chi$ and
        $a_3=0$.
        In the
        following we take for simplicity $\chi=1$.
        This is a fractional generalization of the classical equation that is recovered for $\alpha=1$.
        Using the formalism used in the previous section, we can write \eqref{dar} in a compact way as
        \begin{equation}
            (L_B)_t^{\alpha}f(\mathbf{x},t)=
            \Delta f(\mathbf{x},t).
        \end{equation}
        Applying the Fourier transform we have
        \begin{equation}
            (L_B)_t^{\alpha}f(\mathbf{k},t)=
            |\mathbf{k}|^2 f(\mathbf{k},t),
        \end{equation}
        whose solution is given by
        \begin{equation}
            f(\mathbf{k},t)= t^{2\alpha-2}\sum_{j=0}^{\infty}\left[\left(\frac{t}{2}\right)^{\alpha}
            |\mathbf{k}|\right]^{2j}\frac{1}{[\Gamma(\alpha j+\alpha)]^2}.
        \end{equation}
        In more general, but rather formal way, we have the following
        \begin{te}
            Consider the initial value problem (IVP)
            \begin{equation}
                \label{lag}
                \begin{cases}
                    \left(\frac{\partial^2}{\partial t^2}+\frac{1}{t}\frac{\partial}{\partial t}\right)^{\alpha}
                    f(x,t)= \widehat{O}_x f(x,t), & t>0,\\
                    f(x,0)=g(x),
                \end{cases}
            \end{equation}
            where $\widehat{O}_x$ is an integro-differential operator acting on the space variable that satisfies
            the semigroup property and
            $g(x)$ is an analytic function. Then the operational solution of equation \eqref{lag} is given by:
            \begin{equation}
                \label{op}
                f(x,t)=t^{2\alpha-2}\sum_{j=0}^{\infty}\left(\frac{t}{2}\right)^{2\alpha j}
                \frac{\widehat{O}_x^j\,g(x)}{[\Gamma(\alpha j+\alpha)]^2}
            \end{equation}
        \end{te}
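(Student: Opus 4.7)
The plan is to verify the equation by applying the operator $(L_B)_t^\alpha$ termwise to the proposed series \eqref{op}, exploiting the fact that $\widehat{O}_x$ acts only on $x$ (so it commutes with the temporal operator and its fractional power, once we permit termwise action on an analytic $g$). This reduces the problem to the scalar calculation already carried out in the proof of Theorem \ref{gallico}, namely the key identity from \eqref{pri}:
\begin{equation*}
(L_B)^{\alpha} w^{\beta} \;=\; 4^{\alpha}\left[\frac{\Gamma\!\left(\frac{\beta}{2}+1\right)}{\Gamma\!\left(1-\alpha+\frac{\beta}{2}\right)}\right]^{2} w^{\beta-2\alpha}.
\end{equation*}
Applied to the variable $t$, with exponent $\beta=2\alpha j+2\alpha-2$ coming from the $j$-th summand of \eqref{op}, this gives $(L_B)_t^\alpha t^{2\alpha j+2\alpha-2}=4^\alpha[\Gamma(\alpha j+\alpha)/\Gamma(\alpha j)]^2 t^{2\alpha j-2}$.

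Next I would collect the terms: since $\Gamma(\alpha j)^{-1}=0$ for $j=0$, the $j=0$ contribution drops out, and for $j\geq 1$ the squared Gamma ratio in the numerator cancels against the $[\Gamma(\alpha j+\alpha)]^2$ in the denominator of \eqref{op}, leaving
\begin{equation*}
(L_B)_t^\alpha f(x,t) \;=\; \sum_{j=1}^{\infty} 4^\alpha\left(\frac{1}{2}\right)^{2\alpha j} \frac{t^{2\alpha j-2}}{[\Gamma(\alpha j)]^2}\,\widehat{O}_x^{\,j} g(x).
\end{equation*}
Re-indexing $j\mapsto j+1$ and pulling out one factor of $\widehat{O}_x$ (this is where the semigroup property $\widehat{O}_x^{j+1}=\widehat{O}_x\,\widehat{O}_x^{j}$ is used) yields exactly $\widehat{O}_x f(x,t)$, which establishes the differential identity.

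The main obstacle is not the algebraic manipulation but its justification: one must argue that $(L_B)_t^\alpha$, which is built from the Erd\'elyi--Kober operators $I_2^{0,-\alpha}$ via \eqref{k3}, can be exchanged with the infinite sum and with the powers of $\widehat{O}_x$. This is why the statement assumes $g$ analytic and $\widehat{O}_x$ has the semigroup property: together they guarantee that the series $\sum_j(t/2)^{2\alpha j}\widehat{O}_x^{\,j}g(x)/[\Gamma(\alpha j+\alpha)]^2$ is of Mittag--Leffler type in the operator variable, so that termwise application of an integral operator in $t$ is permitted on the relevant function spaces $F_{p,\mu}$ introduced in Section \ref{prel}. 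A secondary point is the initial condition: for $\alpha<1$ the prefactor $t^{2\alpha-2}$ is singular at $t=0$, so $f(x,0)=g(x)$ must be understood in an operational/Hadamard-regularized sense (for $\alpha=1$ the prefactor is $1$ and the $j=0$ term recovers $g(x)$ classically), and I would phrase the conclusion accordingly, emphasizing that \eqref{op} is a formal operational solution rather than a solution in the strong pointwise sense.
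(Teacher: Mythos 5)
Your termwise computation via the power identity \eqref{pri} (with $\beta=2\alpha j+2\alpha-2$, the $j=0$ term killed by $1/\Gamma(0)^2=0$, and the re-indexing that extracts one factor of $\widehat{O}_x$) is exactly the argument the paper relies on: the theorem is presented as a formal operational generalization of the Fourier-transformed case worked out just before it, which is itself the calculation of Theorem \ref{gallico} with $|\mathbf{k}|^2$ in place of the eigenvalue. Your caveats on convergence and on the singular prefactor $t^{2\alpha-2}$ for $\alpha<1$ match the paper's own qualification that \eqref{op} is only an operational solution, effective when the series converges.
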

        The operational solution \eqref{op} becomes an effective solution
        when the series converges, and this depends upon the actual form of
        the initial condition $g(x)$. Operational methods to solve Euler--Poisson--Darboux equations
        are applied in \cite{hai}.

        \smallskip

        \textbf{Acknowledgement.} We are very greatful to both referees
        for their suggestions and, in particular, to reviewer $\sharp
        1$
        for insightful remarks and for checking many calculations.

\end{document}